\newtheorem{theorem}{{Theorem}}[section]
\newtheorem{proposition}[theorem]{{Proposition}}
\newtheorem{lemma}[theorem]{{Lemma}}
\newtheorem{corollary}[theorem]{{Corollary}}
\newtheorem{question}[theorem]{{Question}}
\theoremstyle{definition}
\newtheorem{definition}[theorem]{{Definition}}
\theoremstyle{remark}
\newtheorem{remark}[theorem]{{Remark}}
\newtheorem{example}[theorem]{{Example}}
\title{Depth $0$ Nonsingular Morse Smale flows on $S^3$}
\author{Bin Yu}
\date{\today}
\begin{document}
\maketitle

\begin{abstract}
In this paper, we first develope the concept of Lyapunov graph to weighted Lyapunov graph (abbreviated as WLG) for nonsingular Morse-Smale flows (abbreviated as NMS flows) on $S^3$. WLG is quite sensitive to  NMS flows on $S^3$. For instance, WLG detect the indexed links of NMS flows. Then we use WLG and some other tools to describe nonsingular Morse-Smale flows without heteroclinic trajectories connecting saddle orbits
(abbreviated as depth $0$ NMS flows). It mainly contains the following several directions:
\begin{enumerate}
  \item we use WLG to list  depth $0$ NMS flows on $S^3$;
  \item with the help of WLG, comparing with  Wada's algorithm, we  provide a direct description about the (indexed) link of depth $0$ NMS flows;
  \item to overcome the weakness  that WLG can't decide topologically equivalent class, we give a simplified Umanskii Theorem to decide when two depth $0$ NMS flows on $S^3$ are topological equivalence;
   \item  under these theories, we classify (up to topological equivalence) all depth 0 NMS flows on $S^3$ with periodic orbits number no more than 4.
\end{enumerate}
\end{abstract}
\tableofcontents

\section{Introduction}\label{introduction}

\subsection{Historic remarks and the aim of the paper}
    Morse-Smale system (both of diffeomorphisms and flows) is a kind of structure stable system whose non-wandering set is composed of finitely many hyperbolic periodic orbits.
     Some historic remarks can be found in  \cite{BGL} and \cite{GP}.
     From the viewpoint of dynamics, it is more or less a kind of simple system. For instance,  there doesn't exist
     a homoclinic orbit in  a Morse-Smale flow. Roughly say, such a system doesn't provide chaos.
     But from the viewpoint more close to topology, i.e., classification (up to topological equivalence), it is quite complicated.

M. Peixoto \cite{Pe} begun  a systematical classification of  Morse-Smale system. In that paper, he focus on the classification of Morse-Smale flows on surfaces. 

An excellent pioneer about the  study of Morse-Smale flows, in particular,
  nonsingular Morse-Smale flows (abbreviated as NMS flows) on high dimension manifolds (dimension no less than $3$) is D. Asimov (\cite{As1}, \cite{As2}).
 His viewpoint is topology. Similar to the relationship between handle decompositions and gradient-like flows (Morse-Smale flows without closed orbits), he showed that there hints a combinatorial decomposition in an NMS flow, which is called a round handle decomposition (abbreviated as RHD). He used RHD to obtain two significant results about NMS flows on $n$-manifolds ($n\geq 4$).
 One (\cite{As1}) says that a  closed dimension $n$ manifold $M$ admitting an NMS flow if and only if the Euler number of $M$ is zero.
 Later, J. Morgan (\cite{Mo}) builds three theorems to nearly describe the 3-manifolds admitting NMS flows. For instance, one of his theorems says that an irreducible 3-manifold $M$ admits an NMS flow if and only if $M$ is a graph manifold.

 On the other hand, Y. Umanskii (\cite{Um}) and A. Prishlyak (\cite{Pr})  used some combinatorial objects as invariant to completely characterize  Morse-Smale flows. Umanskii used the combinatorial object: scheme.  A scheme is the family of all cells, i.e. components of $M$ {closures of separatrices of saddle orbits}, together with combinatorial information on sinks, sources and separatrices constituting the boundary of each cell. Prishlyak (\cite{Pr}) used $\tau$ invariant and framed graph to describe NMS flows.

However, the following natural question  nearly isn't involved in the works above.

 \begin{question} \label{basicquest}
 For a given 3-manifold $M$, how to describe the NMS flows on $M$?
 \end{question}

 M. Wada \cite{Wa} builds an algorithm to decide which (indexed) links   can be as periodic orbits of NMS flows on $S^3$. This work can be regarded
 as a proceed on Question \ref{basicquest}.  However, one is not easy to construct an NMS flow with a given indexed link.

 First, as we have mentioned, heteroclinic trajectories connecting saddle orbits will lead the discussion of NMS flows quite wild. Therefore, in this paper, we just deal with NMS flows without heteroclinic trajectories connecting saddle orbits,
which is called depth $0$ NMS flows.  Actually, the name makes sense in the viewpoint of Smale diagram. For some information about Smale diagram, we suggest \cite{BGL}.   Secondly, for simplify, we only discuss the case $M=S^3$. Although our discussion is restricted on $S^3$, as you will see, the tricks in the paper can be developed to study NMS flows, in particular, depth $0$ NMS flows on general 3-manifolds.

 The main purpose of this paper is to discuss Question \ref{basicquest} for depth $0$ NMS flows on $S^3$. More precisely, we hope to provide a convenient list for depth $0$ NMS flows on $S^3$. By the way, comparing with the work of Wada (\cite{Wa}), we expect to provide a more direct way to understand the (indexed) link of the NMS flows in the list.
 Furthermore, one naturally hope to know how to distinguish the flows in the list up to topological equivalence. We will discuss this
problem based on Umanskii Theorem in \cite{Um}. Finally, we will illustrate the theories  by some concrete examples, i.e., complete classification of Depth $0$ NMS flows with low number of periodic orbits .

\subsection{Main results and main tools}
 In this paper, to describe an NMS flow, we  use Lyapunov function  to  decompose and combine the flow as follows.
  Firstly, we cut the flow along some regular level sets of a Lyapunov function  to some canonical pieces, i.e.,
 filtrating neighborhoods. Notice that a filtrating neighborhood are called a fat round handle by Morgan \cite{Mo}.
  Then we analyze the possible canonical pieces, see Proposition \ref{filnghd}. Notice that this  analysis is already done by Morgan and it is fundamental for understanding NMS flows on 3-manifolds for all of the other researchers on this area.

  Furthermore, to record how to combine the filtrating neighborhoods to form the original flow, Franks \cite{Fr} introduced the concept Lyapunov graph. Roughly say, a Lyapunov graph of a flow is constructed as follows. For an NMS flow described in the last paragraph, the Lyapunov graph is a oriented compact graph whose
vertices and  edges are corresponding to filtrating neighborhoods and the regular level sets accordingly. Franks, K. de Rezende and the author (\cite{Fr}, [Re] and [Yu2]) used Lyapunov graphs to describe a more general kind of flows, i.e., Smale flows. Actually, Theorem 2 of Franks in \cite{Fr} exactly determines necessary and sufficient conditions on Lyapunov graph to be associated with an NMS flow on $S^3$. However, such a list in some sense is quite rough.
For instance, a Lyapunov graph with three vertices  admits infinitely many NMS flows such that the link types of their periodic orbits are pairwise different, see for instance, \cite{Sa} or Section \ref{3per} of the paper.

In this paper,  for recording more information of an NMS flow, we  refine Lyapunov graph to weighted lyapunov graph (abbreviated as WLG).
Now let's briefly introduce WLG. The definition about a WLG of an NMS flow $\varphi_t$ on $S^3$ is based on a Lyapunov graph $G$ of $\varphi_t$.
But there are two new weights:
\begin{enumerate}
  \item we label every vertex $V$ of $G$ by the corresponding filtrating neighborhood;
  \item we  endow each edge $e$ of $G$ by a matrix $A \in PSL(2,\mathbb{Z})$ to represent the gluing homeomorphism (up to isotopy) between the two filtrating neighborhoods associated to $e$.
\end{enumerate}
To define the second kind of weight,  we define some coordinates for the boundary of each filtrating neighborhood, see Section \ref{coordinate}. Notice that, in our case, a boundary component of a filtrating neighborhood is homeomorphic to a torus. This fact ensures that we can use a  matrix $A \in PSL(2,\mathbb{Z})$ to represent the gluing homeomorphism (up to isotopy) of an edge in $G$.

 After the definition of WLG of NMS flows. We turn to the topic about describing depth $0$ NMS flows by using WLG. First of all, we have the following useful observation. If an NMS flow contains one of  the first three types of  filtrating neighborhoods in Proposition \ref{filnghd}, one can split
  the flow to some  simpler NMS flows. On the other hand, this process in some sense is invertible. Such a surgery is called a flow split.
After a flow split, one can easily build WLG for the new NMS flows associated to  a WLG of the original NMS flow. This surgery among WLG is called Graph split. To understand depth $0$ NMS flows on $S^3$, split surgeries promise us that we only need to focus on the depth $0$ NMS flows with WLG which can't be done any split surgeries (called by simple WLG). All topics in this paragraph  can be found in  Section \ref{split}.

  Then we use WLG to list the possible depth $0$ NMS flows on $S^3$. For more precisely represent depth $0$ NMS flows on $S^3$,   we introduce a subset of all WLG, i.e., neat WLG. Then we use neat WLG represent depth $0$ NMS flows on $S^3$: Theorem \ref{simpleWLG} and Theorem \ref{generalWLG}. All topics about this part  can be found in  Section \ref{WLGD}.

  In Section \ref{link}, we discuss the indexed links of depth $0$ NMS flows on $S^3$. Actually, WLG decides the indexed link of NMS flows (Proposition \ref{decidelink}). Then we focus on the following question (Question \ref{Qindlink}):
   for a given neat WLG $G$ which satisfies the two conditions of Theorem \ref{generalWLG}, what is the indexed link of $G$?  The results in Proposition \ref {Wadaop}, Theorem \ref{noV5link} and Theorem \ref{V5link} answer this question.  By the way, B. Campos and P. Vindel \cite{CV} discussed the indexed links of a special class of Depth $0$ NMS flows, i.e., the depth $0$ NMS flows exactly with the first three types of  filtrating neighborhoods in Proposition \ref{filnghd} on $S^3$  by using Wada's algorithm.
 As a consequence, we can use ``simple depth $0$ NMS flow" substitute the concept ``depth $0$ NMS flow with simple WLG", see Question \ref{simpleflow} and Corollary \ref{anwq}.

   Maybe two different depth $0$ NMS flows admit the same WLG. In Section \ref{Umanskii}, we build a criterion (Theorem \ref{topequ}) to decide when two depth $0$ NMS flows on $S^3$ are topologically equivalent. Although the combinatorial tools what we use are different to what Umanskii (\cite{Um}) used, but essentially, it is a simplified Umanskii Theorem. More explanations about this also can be found in Section \ref{Umanskii}.
   As a consequence (Corollary \ref{finite}) of Theorem \ref{topequ},  a WLG (in particular, a neat WLG) associated to a depth $0$ NMS flow on $S^3$
   always admits finitely many depth $0$ NMS flows on $S^3$.

Under these theories, in the end of the paper (Section \ref{example}), we classify (up to topological equivalence) all depth 0 NMS
flows on $S^3$ with periodic orbits number no more than 4. In the cases periodic orbits numbers $2$ and $3$, we collect the classifications to
several propositions (Proposition \ref{2periorb}, Proposition \ref{3regper} and Proposition \ref{3twper}).  In the cases periodic orbits number $4$,
since the parameters are complicated, except for proposition \ref{4perV2}, we give a soft representation for the classification (see Section \ref{4per}). Notice that
 in the cases periodic orbits numbers $2$ and $3$, every NMS flow is a depth $0$ NMS flow. As a byproduct of this part, in \cite{Yu3}, the author use Proposition \ref{2periorb}, Proposition \ref{3regper} and  Proposition \ref{3twper} to represent the homotopy classes of nonsingular vector fields on $S^3$.

\section{Preliminaries}
\subsection{fundamental definitions and facts}
\begin{definition}
A smooth flow $\phi_t$ is called  a nonsingular Morse Smale flow (abbreviated as NMS flow) if it satisfies the following conditions:
\begin{enumerate}
  \item the non-wandering set of $\phi_t$ is composed of finitely many periodic orbits without singularity;
  \item each periodic orbit of $\phi_t$ is hyperbolic, i.e., the Poincare map for each periodic orbits is hyperbolic;
  \item the stable and unstable manifolds of periodic orbits intersect transversally.
\end{enumerate}
\end{definition}

\begin{definition}
A smooth flow is called  a depth $0$ nonsingular Morse Smale flow (abbreviated as a depth $0$ NMS flow) if the flow is an NMS flow without heteroclinic trajectories connecting saddle orbits.
\end{definition}

\begin{remark}
A heteroclinic trajectory $\gamma$ connecting saddle orbits is exactly a trajectory of a point in the intersection of the stable manifold and the unstable manifold of two saddle periodic orbit $\gamma_1$ and $\gamma_2$ correspondingly.
\end{remark}

\begin{definition}
A filtrating neighborhood $N(\gamma)$ of an isolated periodic  orbit $\gamma$ is a compact 3-manifold with vector field $X$ such that:
\begin{enumerate}
  \item $\gamma$ is the maximal invariant set of $(N(\gamma), X)$; and
  \item $X$ is transverse to $\partial N(\gamma)$ and each flowline is connected.
\end{enumerate}
\end{definition}

The definition and the existence of Lyapunov function for a smooth flow, i.e.,
the fundamental theorem of dynamical system, can be found in many standard books, for instance, \cite{Rob}.
Based on the existence of Lyapunov function, Franks \cite{Fr} defined Lyapunov graphs to combinatorially study some flows.
For us, we restrict such a tool in NMS flows.

\begin{definition}
An \emph{abstract Lyapunov graph} is a
finite, connected, oriented graph $L$ which possesses no oriented
cycles, and each vertex of which is labeled with an isolated hyperbolic periodic orbit.
\end{definition}

\begin{definition}
A Lyapunov graph $L$ for an NMS flow $\phi_t$ and a Lyapunov function $f:M\rightarrow R$ is
obtained by taking the quotient complex of $M$ by identifying to a
point each component of a level set of $f$.
\end{definition}

\begin{remark}
If $L$ is a Lyapunov graph of an NMS flow $\phi_t$ on a 3-manifold $M$, we choose a regular point $v_e$ in each edge $e$ of $L$. Suppose $T_e$ is the natural preimage of $v_e$ in $M$. All these preimages cut $M$ to several pieces. $\phi_t$ restricts to each piece  is a filtrating neighborhood with
a periodic orbit as the maximal invariant set (maybe we need to combine some $T^2 \times I$ pieces to the other pieces).  This fact provides us a natural way to understand NMS flows as follows. Firstly, we describe all possibilities of filtrating neighborhoods. Then we discuss how these filtrating neighborhoods can be glued together.
\end{remark}

\begin{definition}
Two smooth flows $\phi_1$ and $\phi_2$ on a manifold $M$ are called \emph{topologically equivalent} if there exists a homeomorphism $f:M\rightarrow M$
preserving trajectories. More precisely, $f$  sends  each trajectory  of $\phi_1$ to a trajectory of $\phi_2$ and preserves orientations.
\end{definition}

\begin{definition}
A link in a three manifold is called an \emph{indexed link} if each component of $L$ is oriented and labeled by an index which belongs to $\{0,1,2\}$.
\end{definition}

In particular,  the periodic orbits  of an NMS flow $\phi_t$ on a three manifold $M$ naturally provide an indexed link which is labeled as follows.
\begin{enumerate}
  \item The link is oriented by  the flow direction of $\phi_t$.
  \item The attractors, the saddle periodic orbits and the repellers of  $\phi_t$ are indexed by $0$, $1$ and $2$ accordingly.
\end{enumerate}

\subsection{filtrating neighborhoods}

Suppose
 $V=D^2 \times S^1=\{(z, t)\mid z\in \mathbb{C}, |z|\leq 1, t=e^{i\theta} \in S^1\}$ ($\theta \in [0,2\pi]$) with vector field
$X=(\dot{z}, \dot{\theta})=(\overline{z}, 1)$,
 and $\widetilde{V}=D^2 \times S^1=V$ with vector field $\widetilde{X}=(\dot{z}, \dot{\theta})=(e^{i\frac{\theta}{2}} \overline{z}, 1)$.

 Then,
 \begin{itemize}
   \item  $X|\partial V$ is divided to $4$ annuli components along $4$ dividing curves such that $X$ is transverse to the interior of these annuli;
   \item $\widetilde{X}|\partial \widetilde{V}$ is divided to $2$ annuli components along $2$ dividing curves such that $X$ is transverse to the interior of these annuli.
 \end{itemize}

 \begin{figure}[htp]
\begin{center}
  \includegraphics[totalheight=4cm]{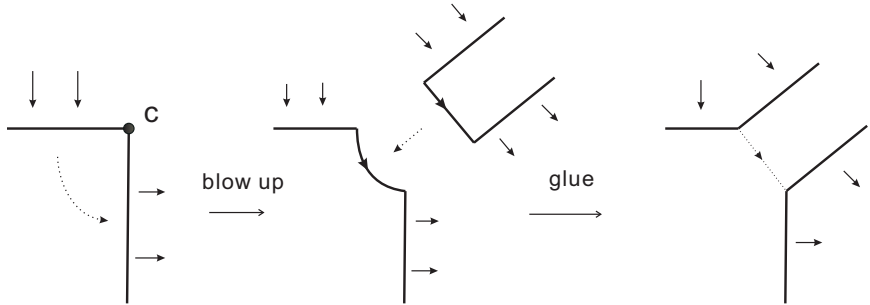}\\
  \caption{the sectional view of the attaching surgery}\label{attaching}
  \end{center}
\end{figure}

 The following lemma is standard, see, for instance, \cite{PD}.

\begin{lemma}
Let $X$ be a smooth vector field on an orientable 3-manifold $M$ and $\gamma$ be a saddle periodic orbit of $X$, then there exists a tubular neighborhood $V(\gamma)$ of $\gamma$ such that $(V(\gamma), X|)$ is topologically equivalent to either $V$ or $\widetilde{V}$. We call $\gamma$ a \emph{normal saddle periodic orbit} if $\gamma$ has a neighborhood which is topologically equivalent to $V$; otherwise, we call $\gamma$ a \emph{twisted saddle periodic orbit}.
\end{lemma}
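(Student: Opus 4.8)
The plan is to reduce the statement to a classification of hyperbolic fixed points of the Poincar\'e return map, using orientability of $M$ to cut the list of possibilities down to exactly two. First I would choose a point $p\in\gamma$, let $\tau$ be the period, and fix a small embedded $2$-disk $\Sigma$ transverse to $\gamma$ at $p$; following the flow defines a Poincar\'e return map $P\colon(\Sigma,p)\to(\Sigma,p)$ with $P(p)=p$. Because $\gamma$ is a saddle periodic orbit, the fixed point $p$ of $P$ is hyperbolic, so $DP_p$ has two real eigenvalues $\mu_s,\mu_u$ with $|\mu_s|<1<|\mu_u|$; the stable and unstable directions of $P$ are the intersections with $\Sigma$ of the (two-dimensional) stable and unstable manifolds of $\gamma$.

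The key structural input is an orientation argument. Since $\phi_t$ is a flow, $\phi_\tau$ is isotopic to the identity through the diffeomorphisms $\phi_s$, $s\in[0,\tau]$, and hence preserves the orientation of the orientable manifold $M$. As $D\phi_\tau$ fixes the flow direction $X(p)$ (with positive sign) and $T_pM=T_p\Sigma\oplus\mathbb{R}\,X(p)$, the induced map $DP_p$ on $T_p\Sigma$ must preserve orientation, i.e.\ $\det DP_p=\mu_s\mu_u>0$. Therefore $\mu_s$ and $\mu_u$ have the same sign, which leaves exactly two cases: $\mu_s,\mu_u>0$ (I will call this the \emph{normal} case) and $\mu_s,\mu_u<0$ (the \emph{twisted} case). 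This is precisely the dichotomy claimed in the lemma.

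Next I would linearize and suspend. By the topological Hartman--Grobman theorem, $P$ is topologically conjugate near $p$ to its linear part, and a one-dimensional linear contraction (resp.\ expansion) is topologically conjugate to $x\mapsto \tfrac12 x$ or $x\mapsto -\tfrac12 x$ according to the sign of the eigenvalue; so up to topological conjugacy $P$ is the model saddle $L=\mathrm{diag}(\pm\tfrac12,\pm2)$ with equal signs. A topological conjugacy of return maps extends, via the suspension (mapping-torus) construction, to a topological equivalence of the corresponding local flows; thus the flow on a suitable saturated tubular neighborhood $V(\gamma)$ is topologically equivalent to the suspension of $L$. It then remains to identify these two suspensions with the explicit models: in the normal case the monodromy fixes each of the four separatrix rays, producing four separatrices and a boundary torus divided into four annuli, which is exactly $(V,X)$; in the twisted case the monodromy $z\mapsto -z$ interchanges the two stable rays and interchanges the two unstable rays, so the separatrices close up into two curves and $\partial V(\gamma)$ is divided into two annuli, which is exactly $(\widetilde V,\widetilde X)$.

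The hard part will be the last step: promoting the local topological conjugacy of Poincar\'e maps to a genuine topological equivalence of flows on an honest \emph{filtrating} (rather than merely germ-level) neighborhood, and checking that the global separatrix and dividing-curve data match the explicit models $V$ and $\widetilde V$ on the nose. Concretely, one must arrange $\Sigma$ and $V(\gamma)$ to be compatible with the local product structure coming from the stable and unstable manifolds, verify that the suspension flow is transverse to $\partial V(\gamma)$ with the prescribed tangency locus, and confirm that the conjugacy respects the flow direction so that orientations are preserved. Since all of this is local and the models are explicit, these verifications are routine once the two-case reduction above is in place, which is why the lemma can be quoted as standard.
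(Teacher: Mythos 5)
Your proposal is correct, but note that the paper does not actually prove this lemma: it quotes it as standard and cites Palis--de Melo \cite{PD}, so your argument is a reconstruction of the textbook proof rather than a parallel of anything in the text. The reconstruction is the right one, and its crux is exactly where you put it: since $\phi_\tau$ is isotopic to the identity through the maps $\phi_s$, it preserves the orientation of the orientable manifold $M$, and since $D\phi_\tau(p)$ fixes the flow direction $X(p)$ with positive sign, the derivative of the return map on $T_p\Sigma$ has positive determinant, so its two eigenvalues (automatically real, because a saddle has one eigenvalue of modulus less than $1$ and one of modulus greater than $1$) share a sign; this is what cuts the a priori sign patterns down to the two models and is precisely where orientability enters. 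The remaining steps are likewise standard and correctly ordered: Hartman--Grobman plus the classification of hyperbolic linear maps of the plane up to topological conjugacy by the signs of the eigenvalues, then the suspension principle that a local conjugacy of Poincar\'e maps induces a local topological equivalence of flows on a flow-saturated tubular neighborhood (both available in \cite{PD}); and your identification of the two suspensions with $(V,X)$ and $(\widetilde V,\widetilde X)$ --- four dividing curves bounding four transverse annuli in the normal case, two dividing curves each wrapping twice longitudinally in the twisted case --- matches the boundary behavior the paper records immediately after defining $V$ and $\widetilde V$. One small caution if you write the last step out in full: the formula $\widetilde X=(e^{i\theta/2}\overline z,1)$ is not single-valued on $D^2\times S^1$ as literally written and should be read as the quotient of a flow on $D^2\times[0,2\pi]$ under an identification such as $(z,2\pi)\sim(iz,0)$; this does not affect your dichotomy, but it is exactly the point at which your claim that the monodromy interchanges the two stable rays and the two unstable rays needs to be verified.
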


Suppose $\Sigma$ is a compact orientable surface, we call $\Sigma \times [0,1]$ with vector field $\frac{\partial}{\partial t}$ a \emph{thickened surface}. We can attach a thickened surface $\Sigma \times [0,1]$ to $V$ or $\widetilde{V}$ along a dividing curve $c$ as follows. Firstly we can blow up $c$ to $c\times [0,1]$ with  vector field $\frac{\partial}{\partial t}$. Then we choose a boundary $c_0$ of $\Sigma$ and glue $c_0 \times [0,1]$ to $c \times [0,1]$ by preserving the flowlines. See Figure \ref{attaching}.
The following lemma is a special case of Theorem 4.4 in \cite{Yu2}.

\begin{lemma} \label{attiksuf}
Let $X$ be a smooth vector field in a closed orientable 3-manifold $M$ and $\gamma$ be an isolated saddle periodic orbit of $X$, then a filtrating neighborhood $N(\gamma)$ of $\gamma$ always can be obtained by attaching some thickened surfaces along all dividing curves of $V$ or $\widetilde{V}$.
\end{lemma}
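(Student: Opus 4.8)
The plan is to realize $N(\gamma)$ as a small normal-form core surrounding $\gamma$ together with the region swept out by the flow between that core and $\partial N(\gamma)$, and then to recognize this complementary region as a disjoint union of thickened surfaces glued along the dividing curves of $V$ or $\widetilde{V}$.

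First I would apply the preceding normal form lemma to fix a tubular neighborhood $V(\gamma)$ of $\gamma$ lying in the interior of $N(\gamma)$ and topologically equivalent to $V$ or to $\widetilde{V}$, according to whether $\gamma$ is a normal or a twisted saddle. Thus $\partial V(\gamma)$ is a torus on which $X$ is transverse away from the dividing curves ($4$ for $V$, $2$ for $\widetilde{V}$), and these curves cut $\partial V(\gamma)$ into alternating entrance and exit annuli. Set $W=\overline{N(\gamma)\setminus V(\gamma)}$; its boundary consists of the inner torus $\partial V(\gamma)$ and the outer surface $\partial N(\gamma)$, along which $X$ is transverse by the definition of a filtrating neighborhood.

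The key point is that the flow on $W$ carries a clean product structure. Since $\gamma$ is the maximal invariant set of $N(\gamma)$ and lies in $\mathrm{int}\,V(\gamma)$, no orbit stays in $W$ for all forward or all backward time; combined with compactness of $N(\gamma)$ and the requirement that each flowline meet $N(\gamma)$ in a connected arc, every orbit meets $W$ in a compact arc. Hence the first-exit map is continuous and the flow identifies $W$ with $\partial_-W\times[0,1]$, where $\partial_-W$ is the entrance set (the exit annuli of $V(\gamma)$ viewed from outside, together with the entrance components of $\partial N(\gamma)$), $\partial_+W$ is the exit set, and the tangency locus $\partial_\tau W$ is the lateral boundary. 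Writing $\partial_-W=\sqcup_i\Sigma_i$ by connected components, each component of $W$ becomes a thickened surface $\Sigma_i\times[0,1]$ whose lateral boundary $\partial\Sigma_i\times[0,1]$, after blowing up the tangency, is exactly the union of the dividing curves thickened in the flow direction. Since on $\partial N(\gamma)$ the field is transverse and the only tangencies of $X$ on $\partial W$ occur along the dividing curves of $\partial V(\gamma)$, every dividing curve lies in the lateral boundary of some $\Sigma_i\times[0,1]$. This exhibits $N(\gamma)=V(\gamma)\cup\bigl(\sqcup_i\Sigma_i\times[0,1]\bigr)$ as $V$ or $\widetilde{V}$ with thickened surfaces attached along all of its dividing curves, as claimed.

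The main obstacle is the behavior of the flow near the dividing curves, where $X$ is tangent to $\partial V(\gamma)$ and the naive flow-box argument breaks down. Here the explicit models $V$ and $\widetilde{V}$ are indispensable: they show that the tangency is an external quadratic tangency, so that flowlines grazing a dividing curve genuinely turn back into $W$, the entrance set $\partial_-W$ is a manifold with boundary, and its boundary circles map onto the blown-up dividing curves. Verifying this local picture, and checking that the twist in $\widetilde{V}$ collapses the four potential dividing curves to two while still contributing a lateral annulus to each, is the part that genuinely requires the normal form; once it is in place the global product structure of $W$ and the attaching description follow formally.
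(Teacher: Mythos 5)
You should know at the outset that the paper does not prove this lemma at all: it is quoted as a special case of Theorem 4.4 of \cite{Yu2}. So your attempt must stand on its own, and it contains a genuine gap at its central step. The assertion that ``the first-exit map is continuous and the flow identifies $W$ with $\partial_-W\times[0,1]$'' is false. Precisely because the tangency along a dividing curve $c$ is \emph{external} to $V(\gamma)$, it is \emph{internal} to $W$: the orbit through a grazing point of $c$ stays inside $W$ near that point, while nearby orbits on one side dip into $V(\gamma)$ and leave $W$ early. Hence the exit-time function on $\partial_-W$ jumps along the locus of entrance points whose forward orbit grazes a dividing curve (in the model $\dot x=x$, $\dot y=-y$ one sees the time-to-exit of $W$ jump as the orbit passes from chords that enter $V(\gamma)$ to the grazing orbit that does not). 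Worse, $W$ cannot be homeomorphic to $\partial_-W\times[0,1]$ at all: each dividing curve is adjacent to one entrance and one exit annulus of $\partial V(\gamma)$, and near a grazing point $W$ is locally connected, so all of $\partial V(\gamma)$ lies in a single component of $W$; thus $W$ is connected (for connected $N(\gamma)$), whereas $\partial_-W$ always has at least two components (the exit annuli of $\partial V(\gamma)$ together with at least one entrance torus of $\partial N(\gamma)$, which is nonempty since $W^s(\gamma)\neq\emptyset$). So your identification of components of $W$ with components of $\partial_-W$ cannot hold, and the phrase ``after blowing up the tangency'' does not repair it: blowing up the curves on $\partial V(\gamma)$ alone does not disconnect $W$ into products.

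The correct mechanism, which is the actual content of the lemma, is to cut $W$ along the \emph{walls}: the compact flow-saturated annuli swept out inside $W$ by the maximal orbit arcs through the points of the dividing curves. On each complementary piece the exit-time function is continuous, the usual flow-box argument gives a product $\Sigma_i\times[0,1]$, and reassembling realizes the walls as the blown-up annuli $c\times[0,1]$ of the attaching construction, with exactly one thickened surface glued to each. But this requires a verification you do not supply and which does not follow from the local quadratic model: that each orbit arc in $W$ grazes $\partial V(\gamma)$ at most once, so that the walls are embedded, pairwise disjoint annuli. Convexity of $|z|^2$ along orbits excludes a second graze only inside the model chart; globally, an arc leaving the chart could a priori return and graze again, making two walls intersect. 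One rules this out by choosing $V(\gamma)$ as an isolating block adapted to a Lyapunov function on $N(\gamma)$ (so that orbits which have left the block have strictly decreasing Lyapunov value and cannot return to touch its boundary). Your closing claim that once the local tangency picture is in place ``the global product structure of $W$ \ldots follow[s] formally'' therefore mislocates the difficulty: the local analysis is standard, while the global step is exactly where the work lies, and as you set it up it is incorrect.
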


The following proposition is fundamental  in this paper. The main part   was firstly stated by Morgan and Wada (\cite{Mo}, \cite{Wa}). But for stating more clearly and insisting on using Lyapunov function to understand flows, we reprove it by using the two lemmas above.  Moveover,  we state more about the positions of the stable and unstable manifolds.

\begin{figure}[htp]
\begin{center}
  \includegraphics[totalheight=4.2cm]{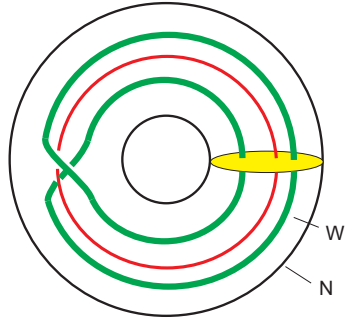}\\
  \caption{the 5th filtrating neighborhood}\label{V5}
  \end{center}
\end{figure}

\begin{proposition} \label{filnghd}
A filtrating neighborhood $N$ of a  saddle periodic orbit $\gamma$ in an NMS flow on $S^3$ is one of the following five cases.
\begin{enumerate}
  \item $N$ is homeomorphic to $(T_1 \times [0,1])\sharp (T_2 \times [0,1])$ where $T_1$ and $T_2$ are two tori. $\partial^{in} N = (T_1 \times \{0\})\cup (T_2 \times \{0\})$ and $\partial^{out} N = (T_1 \times \{1\})\cup (T_2 \times \{1\})$.
      $\gamma$ is a trivial knot in $N$.
   $W_1^s (\gamma)$ is inessential in $T_1 \times \{0\}$ and $W_2^s (\gamma)$ is inessential in $T_2 \times \{0\}$.
      It is similar to $\partial^{out} N$ and $W^u (\gamma)$.

  \item $N$ is homeomorphic to $T^2 \times [0,1] \sharp D^2 \times S^1$.
  $\partial^{in} N = (T^2 \times \{0\})$ and $\partial^{out} N = (T^2 \times \{1\})\cup \partial D^2 \times S^1$. $\gamma$ is a trivial knot in $N$.
   $W_1^s (\gamma)$ is inessential in $T^2 \times \{0\}$ and $W_2^s (\gamma)$ is a meridian  in $D^2 \times S^1$.
  $W^u (\gamma)$ is composed of two inessential simple closed curves bounding an annulus with the same orientation.
 Or $N$ is topologically equivalent to the flow above by changing the orientation.

  \item $N$ is homeomorphic to $D_1 \times S^1\sharp D_2 \times S^1$ where $D_1$ and $D_2$ are two disks. $\partial^{in} N = \partial D_1 \times S^1$ and $\partial^{out} N = \partial D_2 \times S^1$.
      $\gamma$ is a trivial knot in $N$.
   $W_1^s (\gamma)$  is  a meridian  and $W_2^s (\gamma)$ is inessential in $\partial^{in} N$.
    It is similar to $\partial^{out} N$ and $W^u (\gamma)$.

  \item $N$ is homeomorphic to $F \times S^1$ where $F$ is a disk with two holes. Suppose $\partial N = T_1 \cup T_2 \cup T_3$.  $\partial^{in} N = T_1 \cup T_2$ and $\partial^{out} N= T_3$. $W_1^s (\gamma) \subset T_1$, $W_2^s (\gamma) \subset T_2$ and $W^u (\gamma) \subset T_3$. Furthermore,
      $\gamma$, $W^s (\gamma)$ and $W^u (\gamma)$ are parallel to $\gamma$ with the same orientation.
 Or $N$ is topologically equivalent to the flow above by changing the orientation.

  \item $N$ is homeomorphic to $D^2 \times S^1 - W$
    where $W$ is a tubular neighborhood of a $(2, 1)$-cable knot of
$\{0\} \times S^1$ in the interior of $D^2\times S^1$,  $\partial^{in} N = \partial D^2 \times S^1$ and $\partial^{out} N = \partial W$. $\gamma=\{0\}\times S^{1}$ and $W^s (\gamma)$ is an essential simple closed curve intersecting with a meridian 2 times. It is similar to $W^u (\gamma)$. See Figure \ref{V5}.
\end{enumerate}
\end{proposition}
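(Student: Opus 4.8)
The plan is to chain the two preceding lemmas. First I would use the standard lemma to put a tubular neighborhood of $\gamma$ into one of the two normal forms $V$ (normal saddle) or $\widetilde V$ (twisted saddle), and then invoke Lemma \ref{attiksuf} to write the given filtrating neighborhood $N$ as $V$, respectively $\widetilde V$, with thickened surfaces $\Sigma_j\times[0,1]$ attached along \emph{all} of its dividing curves. At this stage I would record the local data carefully: the four dividing curves cut $\partial V$ into two incoming and two outgoing annuli in alternating position, and $W^s_{loc}(\gamma)$ and $W^u_{loc}(\gamma)$ each meet $\partial V$ in two prongs, one in each incoming (resp. outgoing) annulus; on $\partial\widetilde V$ there are only two dividing curves, and the twist makes $W^s_{loc}(\gamma)$ and $W^u_{loc}(\gamma)$ connected, each crossing its annulus a single time. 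The classification then reduces to enumerating the ways the attached surfaces complete these annuli into the transverse torus boundary of a filtrating neighborhood.

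Next I would impose the defining constraints. Since the flow is transverse to $\partial N$ with connected flowlines and $\gamma$ is the maximal invariant set, the boundary splits as $\partial N=\partial^{in}N\sqcup\partial^{out}N$, where $\partial^{in}N$ is assembled from the incoming core annuli together with the incoming faces $\Sigma_j\times\{0\}$, and $\partial^{out}N$ from the outgoing core annuli and the faces $\Sigma_j\times\{1\}$, all glued along (blow-ups of) the dividing curves. Because gluing is along circles and each boundary component must be a torus, an Euler-characteristic count gives $\sum_j\chi(\Sigma_j)=\chi(\partial^{in}N)=\chi(\partial^{out}N)=0$; combined with the fact that the total number of boundary circles of the $\Sigma_j$ equals the number of dividing curves, this yields $k=2+\sum_j g_j$ in the untwisted case and $k=1+\sum_j g_j$ in the twisted case, where $k$ is the number of pieces. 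Hence the $\Sigma_j$ can only be disks, annuli, pairs of pants, and once-punctured tori, occurring in finitely many combinations. Demanding that each boundary component be a single torus and that $\gamma$ stay the maximal invariant set with the prong structure recorded above then cuts this down to a short explicit list of gluing patterns.

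For each surviving pattern I would compute the number of components of $\partial^{in}N$ and $\partial^{out}N$ (each forced into $\{1,2\}$) by tracing the connectivity of the assembled pieces, and then reconstruct $N$ up to homeomorphism directly from those pieces. In the untwisted case this produces the four listed possibilities: the connect sums $(T_1\times[0,1])\sharp(T_2\times[0,1])$, $T^2\times[0,1]\sharp D^2\times S^1$, and $D_1\times S^1\sharp D_2\times S^1$, together with the Seifert piece $F\times S^1$ where $F$ is a pair of pants; here the $(2,2)$-boundary case requires the once-punctured-torus pieces, which serve to open a small core annulus out to a full boundary torus. In the twisted case the single-prong structure of $\widetilde V$ forces the $(2,1)$-cable complement $D^2\times S^1-W$. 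Patterns sharing the same pair (number of incoming tori, number of outgoing tori) are then separated by the global topology of $N$: a reducible connect sum is distinguished from the irreducible $F\times S^1$ by an essential sphere, equivalently by whether the stable and unstable curves are inessential or sit as fibre-parallel longitudes in the boundary tori. Finally, the positions asserted in each case ---$\gamma$ unknotted as the core of the handle, and $W^s(\gamma),W^u(\gamma)$ being inessential, a meridian, a longitude, or a curve meeting a meridian twice--- I would read off directly from where the local prongs of the first step exit through the reconstructed boundary tori, and the orientation-reversal clauses from the obvious symmetry exchanging $\partial^{in}N$ and $\partial^{out}N$.

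The main obstacle is the combinatorial-topological bookkeeping of the last two steps. The coarse count of boundary tori is settled quickly by the Euler-characteristic argument, but the delicate points are: (i) deciding which Euler-characteristic-admissible gluing patterns are actually \emph{realizable} as filtrating neighborhoods with $\gamma$ the sole invariant set and the correct prongs; (ii) distinguishing the two patterns that give three boundary tori (cases 2 and 4) and the two that give two boundary tori (cases 3 and 5) by correctly identifying $N$ up to homeomorphism; and (iii) pinning down the exact isotopy classes of $W^s(\gamma)$ and $W^u(\gamma)$ in the boundary tori, in particular recognizing the $(2,1)$-cable in the twisted case and the meridian/longitude dichotomy. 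Keeping careful track of orientations throughout is the routine but error-prone part.
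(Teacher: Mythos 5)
Your outline follows the same route as the paper: put a tubular neighborhood of $\gamma$ into the normal form $V$ or $\widetilde V$, apply Lemma \ref{attiksuf} to express $N$ as $V$ (resp.\ $\widetilde V$) with thickened surfaces attached along all dividing curves, and then enumerate attaching patterns subject to the torus-boundary condition. But there is a genuine gap: you never invoke the second restriction the paper's proof states explicitly, namely that $N$ must \emph{embed in $S^3$}. Your listed constraints --- each boundary component a torus (via the Euler-characteristic count), $\gamma$ the maximal invariant set, and the correct prong structure --- do not cut the enumeration down to five cases. The ``crossed'' pattern $(c_1, c_3 + S^1\times[0,1])\cup(c_2, c_4+S^1\times[0,1])$, in which the two thickened annuli are attached along opposite rather than adjacent dividing curves of $V$, satisfies every one of your conditions: its boundary is two tori, and it is genuinely realizable as a filtrating neighborhood of a saddle orbit of an NMS flow on irreducible $3$-manifolds --- the paper's remark immediately after the proposition records exactly this case, a twisted $I$-bundle over a punctured M\"obius band. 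It is excluded on $S^3$ only because that manifold does not embed in $S^3$. Since your ``delicate point (i)'' attributes all exclusions to realizability as a filtrating neighborhood with the right prongs, your argument as written would output six cases, not five.

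A secondary, related problem: your claim that the Euler count forces the $\Sigma_j$ to be disks, annuli, pairs of pants, or once-punctured tori is a non sequitur. The constraint $\sum_j\chi(\Sigma_j)=0$ with four boundary circles in total also admits, for instance, $T^2-2D^2$ together with two disks; attached along \emph{opposite} dividing curves this pattern even passes the per-component torus test (each side of $\partial N$ is the punctured torus with both ends capped off through annuli and disks, hence a torus), so again only the embedding into $S^3$, or a finer topological argument, removes it. More generally, the adjacency pattern of the attaching curves is essential data your enumeration by surface types alone does not see: the same multiset of surfaces attached in different patterns yields different manifolds (two annuli along adjacent pairs give $F\times S^1$ as in case 4; along opposite pairs, the M\"obius-band $I$-bundle). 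To repair the proof you should add the paper's restriction (2) and use it to kill the patterns above, which is precisely how the paper's own (admittedly terse) proof arrives at exactly the five listed possibilities.
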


\begin{proof}
By Lemma \ref{attiksuf}, we only need to discuss how to attach thickened surfaces to form a filtrating neighborhoods of an NMS flow on $S^3$. There are two restrictions:
\begin{enumerate}
  \item each connected component of the boundary of such a filtrating neighborhood is homeomorphic to a torus;
  \item obviously, such a filtrating neighborhood can be embedded into $S^3$.
\end{enumerate}
Then we exactly have the following $5$ possibilities which correspond to the five cases listed in the proposition. Case $1$, case $2$, case $3$  and case $4$  correspond to $V$; case $5$ corresponds to $\widetilde{V}$.
\begin{enumerate}
  \item $(c_1 + (T^2 -D^2))\cup (c_2 +D^2)\cup (c_3 +(T^2 -D^2))\cup (c_4 +D^2)$ corresponds to the first case in the list of the proposition.
  Here, $c_1 + (T^2 -D^2)$ means to attach the thickened $(T^2 -D^2)$ to $V$ along $c_1$.
  \item $(c_1, c_2 +S^1 \times [0,1]) \cup (c_3 +(T^2 -D^2))\cup (c_4 +D^2)$ corresponds to the second case in the list.
  \item $(c_1, c_2, c_3 +(S^2 -3D^2))\cup (c_4 +D^2)$ corresponds to the third case in the list.
  \item $(c_1,c_2 +S^1 \times [0,1])\cup (c_3, c_4 + S^1 \times [0,1])$ corresponds to the 4th case in the list.
  \item $(c_1, c_2 + S^1 \times [0,1])$ corresponds to the 5th case in the list.
\end{enumerate}
\end{proof}

\begin{remark}\label{loctopequ}
 Notice that the proposition doesn't completely classify filtrating neighborhoods  of a  saddle periodic orbit of NMS flows on $S^3$.
  Actually, if we pay more attentions to the gluing in the proof, we can easily refine the proposition to the complete classification as follows.
  \begin{enumerate}
    \item In the first case, there are two topologically equivalent classes: both of  $W_1^s (\gamma)$ and $W_2^s (\gamma)$ are left-hand or right-hand orientations (associated to the direction of the vector fields) in $\partial^{in} N$.
    \item In the second case, each subcase (depending on the orientation of the vector field) implies two topologically equivalent classes. For instance, the subcase that $\partial^{in} N = (T^2 \times \{0\})$ and $\partial^{out} N = (T^2 \times \{1\})\cup \partial D^2 \times S^1$ implies two topologically equivalent classes depending on  wether $W_1^s(\gamma)$ is left-hand  orientation or not in $\partial^{in} N$.
    \item In the third case, there are two topologically equivalent classes depending on wether $W_2^s (\gamma)$ is left-hand   orientation or not in $\partial^{in} N$.
    \item In the other cases, each case (or subcase) is one and one corresponding to a topologically equivalent class.
  \end{enumerate}
 \end{remark}

 \begin{remark}
 If we use an irreducible 3-manifold $M$ to instead of $S^3$ and discuss the possible filtrating neighborhoods of a saddle periodic orbit of an NMS flow on $M$ , the first restriction in the proof still exists, see \cite{Mo}. Except for the five cases in the proposition, there is a new one: $(c_1, c_3 + S^1 \times [0,1])\cup (c_2, c_4 + S^1 \times [0,1])$, i.e., the corresponding filtrating neighborhood  is homeomorphic to a  twisted I-bundle over a punctured Mobius band.
\end{remark}

\subsection{definition of weighted Lyapunov graph} \label{coordinate}
We endow the boundary of the filtrating neighborhoods with some coordinates as follows.

\begin{itemize}

  \item If $N$ has a component (in the prime decomposition of $N$) $V=D^2 \times S^1$, then there exists a meridian $m \subset \partial V$ which belongs to $W^s (\gamma)$ or $W^u (\gamma)$, then the orientation of $\gamma$ naturally endows $m$ an orientation. We  can choose a simple closed curve $l$ with an orientation  (it is called a longitude) such that:
         \begin{itemize}
           \item the  intersection number of $l$ and $m$ is $1$;
           \item $(l,m, v)$ is right-hand orientation  where $v$ is a vector in $T_{p} V$ ($p \in m$) corresponding to the flow on $N$.
         \end{itemize}
       In this case, if we choose another longitude $l'$, then
       $\left(
         \begin{array}{c}
           l' \\
           m \\
         \end{array}
       \right)$ =  $\left(
                     \begin{array}{cc}
                       1 & k \\
                       0 & 1 \\
                     \end{array}
                   \right)$ $\left(
                               \begin{array}{c}
                                 l \\
                                 m \\
                               \end{array}
                             \right)
                   $

  \item If $N$ has a component $W=T^2 \times [0,1]$, then we can choose simple closed curves $l_i, m_i \subset T^2 \times \{i\}$ ($i=0,1$)   with orientations, called a longitude and a meridian correspondingly, which satisfies the following conditions to form coordinates correspondingly.
      \begin{itemize}
        \item The intersection number between $l_i$ and $m_i$ ($i=0,1$) is $1$.
        \item $l_0$ and $l_1$ are parallel  and  have the same orientation in $W$; $m_0$ and $m_1$ are parallel  and  have the same orientation in $W$.
        \item $(l_i,m_i, v_i)$ ($i=0,1$) is right-hand orientation where $v_i$ is a vector in $T_{p_i} V$ ($p_i \in T^2 \times \{i\}$) corresponding to the flow on $N$.
      \end{itemize}
      In this case, if we choose another coordinates $l_i', m_i'$ ($i=0,1$), then there exists a matrix $P \in PSL(2,\mathbb{Z})$ such that
       $\left(
         \begin{array}{c}
           l_i' \\
           m_i' \\
         \end{array}
       \right)$ =  $P$ $\left(
                               \begin{array}{c}
                                 l_i \\
                                 m_i \\
                               \end{array}
                             \right)
                   $.

  \item If $N$ is homeomorphic to $F \times S^1$ as case 4 in Proposition \ref{filnghd}. We can choose two simple closed curves $l_i, m_i \subset T_i$ ($i=1,2,3$) with orientations, called a longitude and a meridian correspondingly, which satisfies the following conditions to form coordinates correspondingly.
      \begin{itemize}
        \item $m_1, m_2$ and $m_3$ bound a disk with two holes in $N$.
        \item $l_i$ ($i=1,2,3$) is parallel to  the corresponding invariant manifolds in $T_i$. Moreover, $l_i$ has the same orientation with the corresponding invariant manifolds.
        \item $(l_i,m_i, v_i)$ ($i=1,2,3$) is right-hand orientation where $v_i$ is a vector in $T_{p_i} V$ ($p_i \in T_i$) corresponding to the flow on $N$.
      \end{itemize}

  \item If $N$ is homeomorphic to $D^2 \times S^1 - W$ as case 5 in Proposition \ref{filnghd}. $\partial D^2 \times S^1$  and $\partial W$ are defined by $T_1$ and $T_2$ correspondingly.  We can choose simple closed curves $l_i, m_i \subset T_i$ ($i=1,2$) with orientations, called a longitude and a meridian correspondingly, which satisfies the following conditions to form coordinates correspondingly.
       \begin{itemize}

        \item $l_i$ ($i=1,2$) is parallel to  the corresponding invariant manifolds in $T_i$. Moreover, $l_i$ has the same orientation with the corresponding invariant manifolds.
         \item The geometrical intersection number of  $m_i$ and $l_i$ ($i=1,2$) is $1$. Moreover, $m_i$ has $0$ fiber direction element. (It is easy to prove that they are unique up to isotopy on the corresponding torus)
        \item $(l_i,m_i, v_i)$ ($i=1,2$) is right-hand orientation where $v_i$ is a vector in $T_{p_i} V$ ($p_i \in T_i$) corresponding to the flow on $N$.
      \end{itemize}

\end{itemize}

\begin{remark}\label{freecoor}
\begin{enumerate}
  \item By studying the topology of $F \times S^1$, we can see that $(m_1, m_2, m_3)$ in the 4th case aren't unique up to isotopy in the corresponding tori. There are   $\mathbb{Z} \times \mathbb{Z}$ freedoms  for $(m_1, m_2, m_3)$ because of no restriction of longitude directions and the restriction that $m_1 \cup m_2 \cup m_3$ bounds pants.
  \item In the 5th case, it is obvious that $m_1$ and $m_2$ aren't unique up to isotopy in their corresponding tori.
\end{enumerate}

\end{remark}

\begin{definition}
An abstract \emph{weighted Lyapunov graph} $G$ is a compact oriented graph which is labeled by more things as follows.
\begin{enumerate}
  \item Each vertex is indexed by one of $R$, $A$, $V_1$, $V_2$, $V_3$, $V_4$ and $V_5$. Moreover a  $V_i$ ($i\in \{1,\dots, 5\}$) vertex is called  a saddle vertex.
  \item If a vertex $V\in G$ is labeled by $V_1$ or $V_2$, then a germ of $V$, $g(V)$ is defined to be a small neighborhood of $V$.
  $g(V)$ is labeled as the label rules below.
  \item For every oriented edge $e \in G$, we correspond $e$ to a matrix $B_e \in PSL(2,\mathbb{Z})$, which is called a \emph{gluing matrix}.
\end{enumerate}
The signs for  vertices, the matrices for oriented edges  and the labeled germs of $G$ are called the weight of $G$.
A \emph{simple weighted Lyapunov graph} $G$ is a Lyapunov graph without vertex labeled by $V_1$, $V_2$ and $V_3$.
\end{definition}

The label rules can be defined as follows.
\begin{enumerate}
       \item If $V$ is labeled by $V_1$, then $g(V)$ is composed of one vertex and $4$ rays. Under the orientations of $G$, two rays begin at $V$ and  the other two rays terminate at $V$. Then we label  one ray starting at $V$ and one ray terminating at $V$ by red color. Similarly we label the other two rays by green color.
       \item If $V$ is labeled by $V_1$, then $g(V)$ is composed of one vertex and $3$ rays. Either two rays begin at $V$ and  the other ray terminates at $V$, or vice versa. In arbitrary case,  we label  one ray starting at $V$ and one ray terminating at $V$ by red color and label the other ray by green color.
\end{enumerate}

\begin{definition} \label{WLGequ}
We call $G_1$ and $G_2$   \emph{WLG equivalent} if there exists a homeomorphism $g: G_1 \rightarrow G_2$ preserving all the corresponding weights of the WLG. The map $g$ is called a WLG map.
\end{definition}

\begin{definition}
We say that a weighted Lyapunov graph (or simple Lyapunov graph) $G$ is corresponding to a nonsingular Morse Smale flow $\phi_t$ on $S^3$ if there exists a Lyapunov function $g$ of $\phi_t$ satisfying the following conditions.
\begin{enumerate}
  \item If we forget the weights of the vertices and the edges of $G$, $G$ is the Lyapunov graph of $\phi_t$ associated to $g$.
  \item $R$, $A$ and $V_i$ ($i\in \{1,2,3,4,5\}$) are corresponding to the filtrating neighborhoods of a repeller, an attractor  and case $i$ in Proposition \ref{filnghd} respectively.
  \item The red colored rays of $g(V_1)$ ($g(V_2)$) are associated to $T_1 \times \{0\}$ and $T_1 \times \{1\}$ (respectively, $T^2 \times \{0\}$ and $T^2 \times \{1\}$).
  \item For each boundary component of every filtrating neighborhood associated to the vertices of $G$ satisfying the following condition, we give a coordinate. Let $e$ be an oriented edge of $G$. Define $N_0$ and $N_1$ to be the  filtrating neighborhoods associated to beginning and terminal vertices of $e$ correspondingly. Define $T_0$ and $T_1$ to be the boundaries of $N_0$ and $N_1$ associated to $e$ under the restriction provided by the above condition. Set $(l_i, m_i)$ ($i=0,1$) is the coordinate of $T_i$,
      then the  matrix $B_e$ represents the gluing map from $T_0$ to $T_1$ under the coordinates.
\end{enumerate}
\end{definition}

\begin{remark}
In the following of the paper, generally, when two WLG $G_1$ and $G_2$  are associated  to NMS flows,  we regard them as the same if their difference only is because of the choosing of coordinates (see Remark \ref{freecoor}). But sometimes we need the sharpest form, i.e., WLG equivalence. When this case appears, we will  point it out.
\end{remark}

\subsection{two topological facts}
The following two lemmas are standard in low dimensional topology. They are useful in the following of the paper. The notations which we will use are standard in low dimensional topology. One can find them in Hatcher's book \cite{Ha}.

\begin{lemma}
Let $T$ be an embedded torus in $S^3$, then $T$  bounds a solid torus $V$ in $S^3$.
\end{lemma}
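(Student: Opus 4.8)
The plan is to show that $T$ must be compressible to one side and then to recognize that side as a solid torus assembled from a $3$-ball and a single $1$-handle. First I would record the elementary topological setup: since $T$ is a closed orientable surface embedded in $S^3$, a Mayer--Vietoris (equivalently, Alexander duality) argument shows that $T$ separates $S^3$. Let $M_1$ and $M_2$ be the closures of the two complementary components, so that $S^3 = M_1 \cup_T M_2$ with $\partial M_1 = \partial M_2 = T$. It suffices to prove that one of $M_1$, $M_2$ is a solid torus.

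The decisive step is to produce a compressing disk for $T$. By van Kampen's theorem, $\pi_1(S^3) = \pi_1(M_1) \ast_{\pi_1(T)} \pi_1(M_2)$, and this group is trivial. If both inclusion-induced maps $\pi_1(T) \to \pi_1(M_i)$ were injective, then the edge group $\pi_1(T) \cong \mathbb{Z}^2$ would embed into the amalgamated product, forcing $\pi_1(S^3) \ne 1$, a contradiction. Hence at least one map, say $\pi_1(T) \to \pi_1(M_1)$, has nontrivial kernel. By the Loop Theorem there is then an embedded disk $D \subset M_1$ with $\partial D$ an essential simple closed curve $\gamma$ on $T$; since every essential curve on a torus is nonseparating, $\gamma$ is nonseparating on $T$.

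Next I would compress along $D$. Removing an open bicollar $N(D) \cong D \times [-1,1]$ from $M_1$ replaces the annular neighborhood of $\gamma$ in $T$ by the two disks $D \times \{\pm 1\}$; because $\gamma$ is nonseparating, the surface obtained by surgering $T$ along $\gamma$ is a $2$-sphere $S = \partial(M_1 \setminus \operatorname{int} N(D))$. The region $M_1 \setminus \operatorname{int} N(D)$ is connected: a dual curve $c \subset T$ meeting $\gamma$ transversally in one point would cross a separating $D$ an odd number of times, which is impossible, so $D$ is nonseparating in $M_1$. By Alexander's theorem every embedded $2$-sphere in $S^3$ bounds a ball on each side, so the connected $3$-manifold $M_1 \setminus \operatorname{int} N(D)$ with boundary $S$ is a $3$-ball $B$. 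Then $M_1 = B \cup N(D)$ is a $3$-ball with a single $1$-handle attached, and since the ambient manifold is orientable this can only be the solid torus $D^2 \times S^1$. Thus $T$ bounds the solid torus $M_1$.

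The main obstacle is precisely the compressibility step: upgrading the purely algebraic fact that $\pi_1(T)$ fails to inject into one side to the geometric existence of an \emph{embedded} compressing disk. This is exactly the content supplied by the Loop Theorem, and everything afterward is the routine ``compress-and-identify'' procedure, which needs only Alexander's sphere theorem and the observation that a ball with one orientable $1$-handle is a solid torus.
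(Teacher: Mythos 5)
Your proof is correct, but there is nothing in the paper to compare it against: the paper deliberately gives no argument for this lemma, stating it as standard and referring the reader to Rolfsen's \emph{Knots and Links}. What you have written is exactly the classical proof of Alexander's solid torus theorem, in the form found, e.g., in Hatcher's 3-manifold notes (which the paper cites for the companion lemma on Seifert fibrations of $S^3$): separation via Alexander duality; van Kampen plus the normal-form theorem for amalgamated products to show $\pi_1(T)\to\pi_1(M_i)$ cannot be injective on both sides (an injective $\mathbb{Z}^2$ edge group would embed in the trivial group $\pi_1(S^3)$); the Loop Theorem to upgrade this to an embedded compressing disk $D$ with essential boundary; and the compress-and-identify step, where your mod-$2$ intersection argument with a dual curve correctly shows $D$ is nonseparating in $M_1$, so that $M_1\setminus\operatorname{int}N(D)$ is connected with sphere boundary, hence the closure of a complementary ball by Alexander's sphere theorem, and $M_1$ is a ball with one $1$-handle, i.e.\ a solid torus by orientability of $S^3$. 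Each of these steps is sound. The only hypothesis worth making explicit is tameness: the Loop Theorem and Alexander's sphere theorem require a smooth or PL embedding (a wild torus need not bound a solid torus on either side), but this is automatic in the paper's setting, where the tori arise as regular level sets and boundaries of filtrating neighborhoods of smooth NMS flows.
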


\begin{lemma} \label{top}
\begin{enumerate}
  \item $D^2 \times S^1$ is a Serfert manifold and its Seifert fiber structure exactly can be represented as $D^2 \times S^1 = M(0,1; \alpha/ \beta)$.
  \item $S^3$ is a Seifert manifold and its Seifert fiber structure exactly can be represented as

  $S^3 = M(0,0; \alpha_1 / \beta_1, \alpha_2 / \beta_2 )$.
\end{enumerate}
\end{lemma}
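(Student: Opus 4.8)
The statement is a standard fact about Seifert fibered $3$-manifolds, and the plan is to exhibit the two fibrations explicitly rather than to invoke the general classification. For part $(1)$, I would realize the solid torus as a mapping torus of a rotation of the disk. Fix coprime integers with $0\le\beta<\alpha$, let $r:D^2\to D^2$ be the rotation by angle $2\pi\beta/\alpha$, and form the mapping torus $D^2\times[0,1]/\bigl((x,1)\sim(r(x),0)\bigr)$, which is homeomorphic to $D^2\times S^1$. The suspension circles of $r$ give a fibration by circles: the orbit through the center $0\in D^2$ closes up after a single pass and is a (possibly exceptional) fiber of multiplicity $\alpha$, while every nearby orbit must travel $\alpha$ times around before closing. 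The orbit space is a disk with one cone point of order $\alpha$, which is precisely the normal form $M(0,1;\alpha/\beta)$, the trivial product fibration arising when $\alpha=1$.

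For part $(2)$, the cleanest route is a weighted Hopf action. Viewing $S^3=\{(z_1,z_2)\in\mathbb{C}^2:|z_1|^2+|z_2|^2=1\}$, fix coprime $p,q$ and let $S^1$ act by $t\cdot(z_1,z_2)=(t^p z_1,t^q z_2)$. This action is free away from the two fibers $\{z_2=0\}$ and $\{z_1=0\}$, which carry isotropy of orders $p$ and $q$ and are therefore exceptional fibers of multiplicities $p$ and $q$. The orbit space is a $2$-sphere and the two singular orbits project to two cone points, so $S^3$ is Seifert fibered over $S^2$ with at most two exceptional fibers, which is the normal form $M(0,0;\alpha_1/\beta_1,\alpha_2/\beta_2)$. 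Equivalently, one may split $S^3$ along the genus one Heegaard torus into two solid tori $V_1\cup_T V_2$, place the fibration of part $(1)$ on each $V_i$, and observe that the two base disks glue to $S^2$ while the two cores supply the (at most) two exceptional fibers.

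The only point requiring genuine care is that, in the Heegaard-gluing description, the two fibrations must agree along $T$: the regular fibers that $V_1$ induces on $T$ have to be isotopic to those induced by $V_2$, since the gluing homeomorphism of $S^3$ interchanges meridian and longitude. This amounts to a short slope computation in the mapping class group of the torus, and it constrains the admissible Seifert invariants $\alpha_i/\beta_i$. Using the weighted action as the primary argument sidesteps this bookkeeping, because the fibration is produced globally on $S^3$ in one stroke; I would then simply verify that the orbit space is $S^2$ and count the isotropy to read off the normal form, checking at the end that the result is consistent with the gluing picture.
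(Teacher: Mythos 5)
Your constructions are correct as far as they go, but they prove only half of the statement, and not the half the paper actually uses. The lemma says the Seifert fiber structures of $D^2\times S^1$ and of $S^3$ \emph{exactly} can be represented as $M(0,1;\alpha/\beta)$ and $M(0,0;\alpha_1/\beta_1,\alpha_2/\beta_2)$: this is a classification of \emph{all} Seifert fibrations of these two manifolds, not merely the assertion that fibrations of this form exist. Your mapping-torus model and your weighted Hopf action $t\cdot(z_1,z_2)=(t^pz_1,t^qz_2)$ exhibit particular fibrations in the stated normal forms (the existence direction), but nothing in your argument rules out, say, a Seifert fibration of $S^3$ over $\mathbb{R}P^2$ or one with three exceptional fibers, or a fibration of the solid torus with two exceptional fibers. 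That exclusion is precisely what the paper needs downstream: the proof of Lemma \ref{notwoV5} and the proofs of Lemma \ref{simpleWLG1} and Lemma \ref{simpleWLG3} all rest on the fact that \emph{any} Seifert structure on $S^3$ has base $S^2$ and at most two singular fibers, since the saddle pieces there arrive with a fibration not of your choosing.

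To close the gap one argues from the fundamental group (or orbifold base): for the solid torus, the base is a compact surface with one boundary circle, and $\pi_1(D^2\times S^1)\cong\mathbb{Z}$ forces the base to be a disk with at most one cone point, since a fibration over a disk with two or more cone points, or over a nonplanar base, has non-cyclic fundamental group; for $S^3$, triviality of $\pi_1$ forces an orientable genus-zero base and at most two exceptional fibers, because with three or more cone points $\pi_1$ surjects onto a nontrivial triangle-group-type quotient. This is the content of Theorem 2.3 of Hatcher's notes, which is in fact what the paper does here: it gives no proof at all, citing Rolfsen for part (1) and Hatcher for part (2). So your proposal is a genuinely different route in spirit (explicit models instead of citation), but as written it establishes only that the normal forms occur, and the word ``exactly''---the part carrying all the weight in this paper---remains unproved.
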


The proof and more details about the first lemma can be found in, for instance, \cite{Ro};   the proof of the second lemma can be found in   \cite{Ha} Theorem 2.3.

\section{Split surgeries}\label{split}

\subsection{flow split}

Let $\phi_t$ be an NMS flow on $S^3$, $N$ be a filtrating neighborhood of a saddle periodic orbit $\gamma$ of $\phi_t$.
If $N$ is a type 1, 2 or 3 filtrating neighborhood in Proposition 2.13, then we can do a kind of surgery to build two new NMS flows on $S^3$. We  call such kind of surgery \emph{flow split}.  Flow split has three forms depending on the type of the filtrating neighborhood $N$.
\begin{enumerate}
  \item \emph{Flow split $I$}: $N$ is described as 1 of Proposition 2.13, suppose $N_1^0 \cup N_1^1 \cup N_2^0 \cup N_2^1 = \overline{S^3 -N}$ where $\partial N_i^j =T_i \times \{j\}$ for every $i \in \{1,2\}$ and $j \in \{0,1\}$. Then we glue $N_i^0$ to $N_i^1$  along $T_i \times \{0\}$ and $T_i \times \{1\}$ such that  up to isotopy on  the two boundary tori, the gluing map is the same with the map induced by the flowlines in $N$. Moveover the map ensures that the stable and unstable manifolds of the saddle periodic orbits transversely intersect.   After this surgery, we obtain two NMS flows $\phi_t^1$ and $\phi_t^2$ on $S^3$.
  \item \emph{Flow split $II$}: $N$ is described as 2 of Proposition 2.13, $N_1$, $N_2$ and $N_0$ are associated to the three connected components of $\overline{S^3 -N}$ with boundary $T_1$, $T_2$ and $T_0$ correspondingly. Then we glue $N_1$ and $N_2$ together along $T_1$ and $T_2$  such that up to isotopy on  the two boundary tori, the gluing map is similar to the flow split $I$. Moveover, We glue a standard filtrating neighborhood of a periodic orbit attractor or repeller to $N_0$.    After this surgery, we obtain two NMS flows $\phi_t^1$ and $\phi_t^2$ on $S^3$. Notice that in this case $\phi_t^1$ and $\phi_t^2$ are not symmetric.
  \item \emph{Flow split $III$}: $N$ is described as 3 of Proposition 2.13,  $N_1$ and $N_2$ are associated to the two connected components of $\overline{S^3 -N}$ with boundary $T_1$ and $T_2$ correspondingly. Then we glue a standard filtrating neighborhood of a periodic orbit repeller and attractor to $N_1$ and $N_2$ to form NMS flows $\phi_t^1$ and $\phi_t^2$ on $S^3$ correspondingly.
\end{enumerate}

Notice that in flow split $I$ and flow split $II$, the new NMS flow isn't unique up to topological equivalence except for $\phi_t^2$ in the flow split $II$. The reason is that topologically equivalent class depends on the gluing  of stable and unstable manifolds of saddle periodic orbits on the gluing boundary. But each flow has an unique  indexed link.

\subsection{graph split}

Let $G$ be a WLG which is homeomorphic to a tree, then we can define \emph{graph split} along $V_1$, $V_2$ or $V_3$ to build two new WLG. See Figure \ref{graphsplit}.

\begin{figure}[htp]
\begin{center}
  \includegraphics[totalheight=3.9cm]{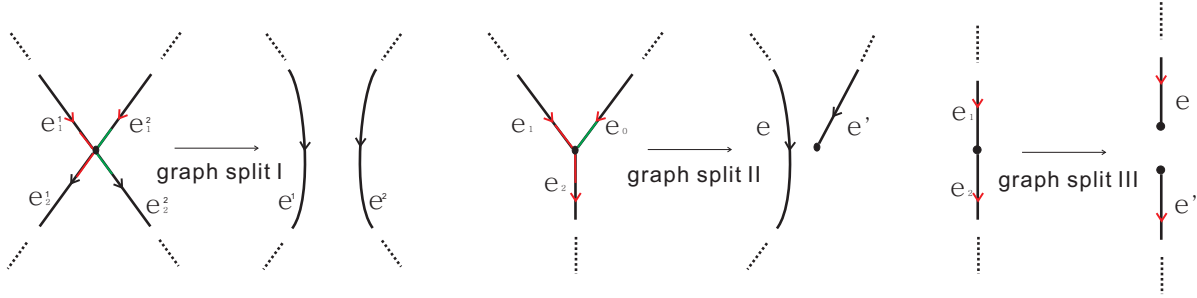}\\
  \caption{Graph split}\label{graphsplit}
  \end{center}
\end{figure}

\begin{enumerate}
  \item \emph{Graph split $I$}: Suppose $V$ is a vertex in $G$ labeled with $V_1$ and the germ of $V$, then we can build two new WLG $G_1$ and $G_2$ from $G$ as follows.
      \begin{enumerate}
        \item $G_1 \cup G_2$ is obtained by $G- U(V)$ connecting $V^0 (e_1^i)$ with $V^1 (e_2^i)$ ($i=1,2$) along a new oriented edge $e^i$.
      Here $G_i$ ($i=1,2$) contains $e^i$ and $U(V)$ is the union of $V$ with the edges disjoint to $V$.  For an edge $e\subset G$, $V^0 (e)$ and $V^1 (e)$ are the starting vertex and the terminal vertex of $e$ correspondingly.

        \item In $G_i$ ($i=1,2$),  except for the matrix $B_{e^i}$ for $e^i$,  the weights are the same to the corresponding weights of  $G$.
        \item $B_{e^i} = B_{e_1^i} \cdot B_{e_2^i}$.
      \end{enumerate}
  \item \emph{Graph split $II$}: Suppose $V$ is a vertex in $G$ labeled with $V_2$, then we can build two new WLG $G_1$ and $G_2$ from $G$ as follows.
      \begin{enumerate}
        \item $G_1 \cup G_2$ is obtained by $G- U(V)$ connecting $V^0 (e_1)$ with $V^1 (e_2)$  along a new oriented edge $e$ and gluing $V^0 (e_0)$ ($V^1 (e_0)$) to  an $R$ (respectively, $A$) vertex along a new edge $e'$. Here $e_1$ and $e_2$ are associated to the red rays of $g(V)$ respectively. The first connected component  is called $G_1$ and the second one is called $G_2$.
        \item In $G_i$ ($i=1,2$),  the weights are the same to the corresponding weights of $G$ except for the matrix $B_{e}$ for $e$ in $G_1$.
        \item $B_e = B_{e_1} \cdot B_{e_2}$.
      \end{enumerate}
  \item \emph{Graph split $III$}: Suppose $V$ is a vertex in $G$ labeled with $V_3$ and $V^1 (e_1)=V^0 (e_2)=V$. Then we can build two new WLG $G_1$ and $G_2$ from $G$ as  follows.
      \begin{enumerate}
        \item $G_1 \cup G_2$ is obtained by $G- U(V)$ gluing $e$ and an $A$ vertex to $V^0 (e_1)$ and gluing $e'$ and an $R$ vertex to $V^1 (e_2)$. The connected component containing $e$ is called $G_1$ and the other connected component is called $G_2$.
        \item In $G_i$ ($i=1,2$), if a vertex, the weights are the same to the corresponding weights of $G$. In particular, $B_e = B_{e_1}$ and $B_{e'} =B_{e_2}$.
      \end{enumerate}
\end{enumerate}

\subsection{simple WLG decomposition}

Flow splits and Graph splits have some natural relations as follows.
 Let $G$ be a WLG of an NMS flow $\phi_t$ on $S^3$.
 Suppose $V_i (i\in \{1,2,3\})$ is a filtrating neighborhood of $\phi_t$.
  Then we can do a flow split $j$ ($j=I$, if $i= 1$; $j=II$, if $i=2$; $j=III$, if $i=3$) at $V_i$ on $\phi_t$ to obtain two new NMS flows $\phi_t^1$ and $\phi_t^2$. Therefore, the new WLG $G_1$ and $G_2$ obtained by doing Graph split $j$  at the $V_i$  vertex are two WLG of $\phi_t^1$ and $\phi_t^2$ correspondingly. We call such a graph split  is associated to  the corresponding flow split.

\begin{proposition} \label{SWLGD}
Let $G$ be a WLG  which is homeomorphic to a tree. For a given order of all $V_1, V_2$ and $V_3$ vertices of $G$,  if we  do  graph splits one by one under this order on all  $V_1, V_2$ and $V_3$ vertices of $G$, we can obtain new   WLG  $G_1, ..., G_n$. Then $\{G_1, ..., G_n\}$ satisfies the following conditions.
\begin{enumerate}
  \item As a WLG set, $\{G_1, ..., G_n\}$ doesn't depend on the order;
  \item Each $G_i$ ($i\in\{ 1,..., n\}$) is a simple WLG.
\end{enumerate}
\end{proposition}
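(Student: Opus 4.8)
The plan is to regard the family of graph splits as a terminating \emph{abstract rewriting system} acting on finite collections of tree-shaped WLGs, and to deduce order-independence from termination together with local confluence (Newman's lemma). First I would check that the procedure is well defined and halts. Since $G$ is a tree, removing a vertex $V$ of degree $d$ leaves a forest with $d$ components; each of Graph split $I$, $II$, $III$ reconnects or caps these components using exactly one new edge per reconnected pair or capped branch, so no cycle is ever created and each output is again a tree. Thus graph split always stays inside the class on which it is defined. Moreover each split deletes the chosen $V_1$, $V_2$ or $V_3$ vertex and introduces only $A$ and $R$ vertices, so the total number of type-$1,2,3$ vertices across the whole current collection drops by exactly one at every step. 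Hence if $G$ has $k$ such vertices, the process halts after exactly $k$ steps and returns $n=k+1$ trees, none containing a $V_1$, $V_2$ or $V_3$ vertex. By the definition of a simple WLG this is precisely assertion (2).

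The substance is assertion (1), which I would obtain by proving local confluence: whenever a current collection admits splits at two distinct vertices $V$ and $W$, the two one-step results can be completed to a common collection. If $V$ and $W$ lie in different trees the operations are disjoint and commute trivially. If they lie in the same tree $H$, then because $H$ is a tree the split at $V$ places $W$ in exactly one of the two resulting components, and symmetrically; one then checks that carrying out the two splits in either order produces the same unordered collection of trees. The combinatorial grouping is forced by fixed local data and so cannot depend on timing: the red/green pairing at a $V_1$ vertex, the red-pair-versus-green-branch structure at a $V_2$ vertex, and the in/out capping at a $V_3$ vertex are intrinsic to the vertex, so the way the branches of $H$ are distributed among the final trees is determined before any split is performed.

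The delicate point, and the step I expect to be the main obstacle, is the behaviour of the gluing matrices when $V$ and $W$ are \emph{adjacent}, i.e.\ share an edge $f$. Splitting at $V$ first collapses $f$ together with a second edge at $V$ into a new edge whose weight is the corresponding product in $PSL(2,\mathbb{Z})$, and this composite edge is then consumed by the split at $W$; splitting at $W$ first does the reverse. Because an abstract Lyapunov graph is oriented and carries no oriented cycle, the edges along the path through $V$ and $W$ inherit a consistent flow direction, so in either order the surviving edge receives the same \emph{ordered} product of the matrices along that path; associativity of multiplication in $PSL(2,\mathbb{Z})$ then yields equality of the weights, while every other weight is copied verbatim. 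This establishes local confluence. Combined with termination, Newman's lemma gives global confluence, so the rewriting system has a unique normal form; that normal form is exactly the set $\{G_1,\dots,G_n\}$ (up to WLG equivalence), which is therefore independent of the order and proves (1).
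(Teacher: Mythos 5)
Your proposal is correct and follows essentially the same route as the paper: the paper's proof likewise reduces order-independence to the two-step diamond $G^{21}=G^{12}$ for a pair of split vertices, handling non-adjacent vertices by locality and adjacent ones by associativity of matrix multiplication in $PSL(2,\mathbb{Z})$, and deduces simplicity of each factor from the fact that no new $V_1$, $V_2$, $V_3$ vertices are created. Your termination count and appeal to Newman's lemma merely formalize the bookkeeping that the paper leaves implicit (namely, that checking swaps of consecutive splits suffices), so the mathematical content is identical.
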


\begin{proof}
Suppose $V^1$ and $V^2$ are two vertices in $G$ such that $V^1$ and $V^2$ are $V_i$ ($i=1,2$ or $3$) vertices. The split surgeries on $V^1$ and $V^2$
are called  $S_1$ and $S_2$ correspondingly. Suppose $G \overset{S_1}{\rightarrow} G^1 \overset{S_2}{\rightarrow} G^{21}$ and $G \overset{S_2}{\rightarrow} G^2 \overset{S_1}{\rightarrow} G^{12}$. To prove $1$ of the proposition, we only need to check that $G^{21}=G^{12}$.

If $V^1$ and $V^2$ aren't adjacent in $G$, since $S_1$ and $S_2$ are local and independent, obviously  $G^{21}=G^{12}$.
Otherwise, $V^1$ and $V^2$ are adjacent in $G$, the associative law of the multiplication of matrices ensures that $G^{21}=G^{12}$.

Moreover, the fact that no new $V_1, V_2$ and $V_3$ vertices appear in the proceed of the surgery ensures that each $G_i$ is a simple WLG.
\end{proof}

\begin{definition}
We call each $G_i$ ($i \in\{1,...,N\}$)  a \emph{simple WLG factor} of $G$ and $\{G_1,\dots, G_n\}$ the \emph{simple WLG decomposition} of $G$. Proposition \ref{SWLGD} tell us that simple WLG decomposition of a WLG $G$ is well defined. An $A$ or $R$ vertex $V$ in a simple WLG factor $G_i$ of $G$ is called
\emph{a special vertex} if $V$ is decomposed from graph split III or $G_2$ of graph split II.
  Moreover, the graph which is obtained by cutting all $A$ and $R$ vertices of a simple WLG is called \emph{a saddle simple piece}.
\end{definition}

For an NMS flow $\phi_t$ on $S^3$ with a WLG $G$, suppose $\{G_1, \dots, G_n\}$ is the simple WLG decomposition of $G$. Then we can obtain NMS flows
$\phi_t^1, \dots, \phi_t^n$ by  the corresponding flow splits  to these graph splits. Naturally, the NMS flow $\phi_t^i$ ($i\in\{1,\dots,n\}$) is associated to the WLG $G_i$. Such a flow decomposition is called \emph{the flow splitting of $\phi_t$ associated to $G$}. Moreover, a sub-manifold
with restricted flow  corresponding to a saddle simple piece of the WLG is called \emph{a saddle simple piece of $\phi_t$ associated  to $G$}.

It is interesting to ask whether we can define simple WLG NMS flows. Let's state it more precisely.
\begin{question}\label{simpleflow}
 If an NMS flow $\phi_t$ on $S^3$ can be associated to a simple WLG, whether every WLG of $\phi_t$ is simple?
\end{question}

We will prove that the answer is positive for depth $0$ NMS flows (see Corollary \ref{anwq}).

\section{WLG of Depth $0$ NMS flows on $S^3$}\label{WLGD}
\subsection{Neat WLG}

Suppose $G$ is a WLG of some NMS flow on $S^3$ and $e_1$ and $e_2$  are two edges labeled with matrices $A$ and $B$ accordingly. Moreover, assume that $e_1$ and $e_2$ are adjacent to two $A$ or $R$ vertices $V^1$ and $V^2$ respectively.  It is obvious that there  exists an unique oriented path $l$ starting at $V^1$ and terminating at $V^2$. This path reorders $e_1$ and $e_2$ to $\overline{e_1}$ and $\overline{e_2}$ with matrices $\overline{A}$ and $\overline{B}$. Here if the orientation of $e_1$ is coherent to the orientation of $l$,  $\overline{A}=A$; otherwise, $\overline{A}=A^{-1}$. $\overline{B}$ shares  a similar definition. $\overline{A}$ and $\overline{B}$ are called \emph{the matrices provided by $l$ on $e_1$ and $e_2$}.

\begin{definition} \label{neatsimpleWLG}
A simple WLG $G$ is called a \emph{neat simple WLG} if it satisfies one of the following two conditions.
\begin{enumerate}
  \item All  saddle vertices are indexed by $V4$. Moreover, the vertices satisfy the following rules.
  \begin{enumerate}
    \item The gluing matrix between two saddle vertices  is either
                                                                                 $\left(
                                                                                     \begin{array}{cc}
                                                                                       1 & 0 \\
                                                                                       0 & 1 \\
                                                                                     \end{array}
                                                                                   \right)$
                                                                                 or
                                                                                 $\left(
                                                                                   \begin{array}{cc}
                                                                                     -1 & 0 \\
                                                                                     0 & -1\\
                                                                                   \end{array}
                                                                                 \right)$.
    \item Except for two edges, the gluing matrix between a saddle vertex and an $A$  (or a $R$) vertex is either
                                                                                 $\left(
                                                                                     \begin{array}{cc}
                                                                                       1 & 0 \\
                                                                                       k & 1 \\
                                                                                     \end{array}
                                                                                   \right)$
                                                                                 or
                                                                                 $\left(
                                                                                   \begin{array}{cc}
                                                                                     -1 & 0 \\
                                                                                     k & -1\\
                                                                                   \end{array}
                                                                                 \right)$ for some $k\in \mathbb{Z}$.
     \item For the two exceptional edges $e_1$ and $e_2$ with matrices $A$ and $B$, they are adjacent to two $A$ or $R$ vertices $V^1$ and $V^2$. Suppose $\overline{A}$ and $\overline{B}$ are the matrices provided by the unique oriented path starting at $V^1$ and terminating at $V^2$. Then $ \overline{A}\overline{B} = \left(
                                                                                     \begin{array}{cc}
                                                                                       0 & -1 \\
                                                                                       1 & 0 \\
                                                                                     \end{array}
                                                                                   \right)$
                                                                                    or
                                                                                 $\left(
                                                                                   \begin{array}{cc}
                                                                                     0 & 1 \\
                                                                                     -1 & 0\\
                                                                                   \end{array}
                                                                                 \right)$. Suppose $\overline{B}= \left(
                                                                                     \begin{array}{cc}
                                                                                       p & q \\
                                                                                       r & s \\
                                                                                     \end{array}
                                                                                   \right) \in PSL(2,\mathbb{Z})$.
  \end{enumerate}

  \item One saddle vertex is indexed by $V5$ and the others are indexed by $V4$. The matrix between two saddle vertices is the same to the first case.
  The matrix between a saddle vertex and an $A$ (or a $R$) vertex satisfies the following rules.
  \begin{enumerate}
    \item Except for one edge, the gluing matrix between a saddle vertex and an $A$ (or a $R$) vertex is either
                                                                                 $\left(
                                                                                     \begin{array}{cc}
                                                                                       1 & 0 \\
                                                                                       k & 1 \\
                                                                                     \end{array}
                                                                                   \right)$
                                                                                 or
                                                                                 $\left(
                                                                                   \begin{array}{cc}
                                                                                     -1 & 0 \\
                                                                                     k & -1\\
                                                                                     \end{array}
                                                                                 \right)$
                                                                                 for some $k\in \mathbb{Z}$.
    \item For the exceptional edge which starts at an $R$ vertex or terminates at a $A$ vertex, the corresponding matrix $B$ is either
         $\pm\left(
                                                                                     \begin{array}{cc}
                                                                                       1 & -2 \\
                                                                                       1-t & 2t-1 \\
                                                                                     \end{array}
                                                                                   \right)$ (if it starts at an $R$ vertex)
                                      or $\pm \left(
                                                                                     \begin{array}{cc}
                                                                                       2t-1 & 2 \\
                                                                                       t-1 & 1 \\
                                                                                     \end{array}
                                                                                   \right)$ (if it terminates at a $A$ vertex)
         for some $t\in \mathbb{Z}$.
  \end{enumerate}

\end{enumerate}
 \end{definition}

 \begin{definition} \label{neatWLG}
A WLG $G$ is called a \emph{neat WLG} if each simple WLG factor $G_0$ of $G$ is a neat simple WLG. Obviously, a neat simple WLG is a neat WLG.
 \end{definition}

 \begin{definition}\label{signvet}
 Let $G$ be a neat simple WLG. If there exists an $A$ vertex adjacent to an exceptional edge of $G$, then we define the vertex by $V_0$.
 Otherwise, we define $V_0$ to be an $R$ vertex adjacent to an exceptional edge of $G$. Suppose  $V_0'$ is the vertex adjacent to $V_0$
 and $e$ is the edge with vertices $V_0$ and $V_0'$. Assume that $V$ is a vertex in $G$ different to $V_0$.
 Suppose that $k$ is the number of the edges with matrices whose traces are $-2$ in the path between $V$ and $V_0'$.
 Then we can define a kind of
 signature of $V$ associated to $V_0$, $sign (V,V_0)=(-1)^m$  as follows.
 \begin{enumerate}
   \item If $G$ doesn't have a $V5$ vertex, then $m$ can be defined as follows.

   \begin{enumerate}
     \item  If  $V$ is the other vertex adjacent to an exceptional edge in $G$, then $m=k+1$ if  $\overline{A}\overline{B}  = \left(
                                                                                     \begin{array}{cc}
                                                                                       0 & -1 \\
                                                                                       1 & 0 \\
                                                                                     \end{array}
                                                                                   \right)$; $m=k$ if $\overline{A}\overline{B}  = \left( \begin{array}{cc}
                                                                                       0 & 1 \\
                                                                                       -1 & 0 \\
                                                                                     \end{array}
                                                                                     \right)$.
     \item Otherwise, $m=k$.
   \end{enumerate}

   \item If $G$ admits a $V5$ vertex, then $m$ can be defined as follows.

     \begin{enumerate}
       \item  If $B$ is either
         $\left(
                                                                                     \begin{array}{cc}
                                                                                       1 & -2 \\
                                                                                       1-t & 2t-1 \\
                                                                                     \end{array}
                                                                                   \right)$ (when $V_0$ is an $R$ vertex)
                                      or $\left(
                                                                                     \begin{array}{cc}
                                                                                       2t-1 & 2 \\
                                                                                       t-1 & 1 \\
                                                                                     \end{array}
                                                                                   \right)$ (when $V_0$ is an $A$ vertex)
         for some $t\in \mathbb{Z}$, then $m=k$.

         \item If $B$ is either
         $-\left(
                                                                                     \begin{array}{cc}
                                                                                       1 & -2 \\
                                                                                       1-t & 2t-1 \\
                                                                                     \end{array}
                                                                                   \right)$ (when $V_0$ is an $R$ vertex)
                                      or $-\left(
                                                                                     \begin{array}{cc}
                                                                                       2t-1 & 2 \\
                                                                                       t-1 & 1 \\
                                                                                     \end{array}
                                                                                   \right)$ (when $V_0$ is an $A$ vertex)
         for some $t\in \mathbb{Z}$, $m=k+1$.
         \end{enumerate}
 \end{enumerate}
 \end{definition}

 \begin{figure}[htp]
\begin{center}
  \includegraphics[totalheight=7cm]{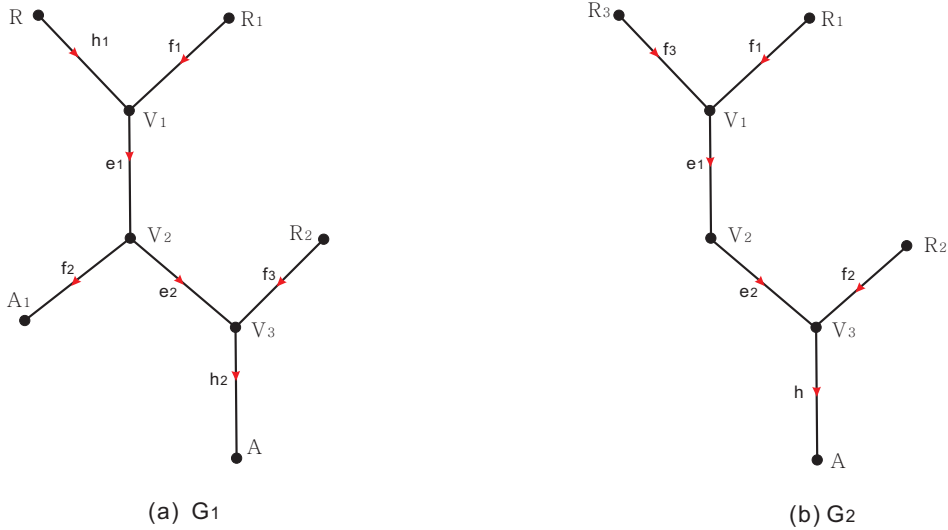}\\
  \caption{two neat simple WLG $G_1$ and $G_2$}\label{WLG}
  \end{center}
\end{figure}

 \begin{example} \label{exampleWLG}
 We introduce two neat simple WLG $G_1$ and $G_2$ as Figure \ref{WLG} shows. In the figure, $R_i$, $V_j$ and $A_k$ are corresponding to $R$, saddle and
 $A$ vertices respectively. In particular, $V_2$ in $G_2$ represents a twisted saddle periodic orbit of a flow. Moreover, $e_i$, $f_j$ and $h_k$ are associated
 to the edges of the WLG. For an edge $e$, we denote the associated gluing matrix by $B_e$.
 \begin{enumerate}
   \item First we define the gluing matrices in $G_1$:

         $$B_{e_1}=\left(
                                                                                     \begin{array}{cc}
                                                                                       1 & 0 \\
                                                                                      0 & 1 \\
                                                                                     \end{array}
                                                                                   \right),
         B_{e_2}=\left(
                                                                                     \begin{array}{cc}
                                                                                       -1 & 0 \\
                                                                                      0 & -1 \\
                                                                                     \end{array}
                                                                                   \right);$$

         $$B_{f_1}=\left(
                                                                                     \begin{array}{cc}
                                                                                       -1 & 0 \\
                                                                                      1 & -1 \\
                                                                                     \end{array}
                                                                                   \right),
         B_{f_2}=\left(
                                                                                     \begin{array}{cc}
                                                                                       1 & 0 \\
                                                                                      -2 & 1 \\
                                                                                     \end{array}
                                                                                   \right),
         B_{f_3}=\left(
                                                                                     \begin{array}{cc}
                                                                                       -1 & 0 \\
                                                                                      -1 & -1 \\
                                                                                     \end{array}
                                                                                   \right);$$

          $$B_{h_1}=\left(
                                                                                     \begin{array}{cc}
                                                                                       -1 & 3 \\
                                                                                      -1 & 2 \\
                                                                                     \end{array}
                                                                                   \right),
          B_{h_2}=\left(
                                                                                     \begin{array}{cc}
                                                                                       3 & 2 \\
                                                                                      1 & 1 \\
                                                                                     \end{array}
                                                                                   \right).$$

     By Definition \ref{signvet}, for the vertex $A$, we can obtain the signature of every vertex of $G_1$ as follows:
     $Sign(V_1, A)=-1$, $Sign(V_2, A)=-1$, $Sign(V_3, A)=1$, $Sign(R_1, A)=1$, $Sign(R_2, A)=1$, $Sign(A_1, A)=1$ and $Sign(R, A)=-1$.

   \item Now we define the gluing matrices in $G_1$:

         $$B_{e_1}=\left(
                                                                                     \begin{array}{cc}
                                                                                       1 & 0 \\
                                                                                      0 & 1 \\
                                                                                     \end{array}
                                                                                   \right),
         B_{e_2}=\left(
                                                                                     \begin{array}{cc}
                                                                                       -1 & 0 \\
                                                                                      0 & -1 \\
                                                                                     \end{array}
                                                                                   \right);$$

         $$B_{f_1}=\left(
                                                                                     \begin{array}{cc}
                                                                                       -1 & 0 \\
                                                                                      1 & -1 \\
                                                                                     \end{array}
                                                                                   \right),
         B_{f_2}=\left(
                                                                                     \begin{array}{cc}
                                                                                       1 & 0 \\
                                                                                      -2 & 1 \\
                                                                                     \end{array}
                                                                                   \right),
         B_{f_3}=\left(
                                                                                     \begin{array}{cc}
                                                                                       -1 & 0 \\
                                                                                      -1 & -1 \\
                                                                                     \end{array}
                                                                                   \right);$$

          $$B_{h}=\left(
                                                                                     \begin{array}{cc}
                                                                                       5 & 2 \\
                                                                                      2 & 1 \\
                                                                                     \end{array}
                                                                                   \right).$$

     By Definition \ref{signvet}, for the vertex $A$, we can obtain the signature of every vertex of $G_2$ as follows:
     $Sign(V_1, A)=-1$, $Sign(V_2, A)=-1$, $Sign(V_3, A)=1$, $Sign(R_1, A)=1$, $Sign(R_2, A)=1$ and $Sign(R_3, A)=1$.
 \end{enumerate}
 \end{example}

\subsection{Neat WLG and Depth $0$ NMS flows on $S^3$}

This subsection focuses on the following question:
How to use (neat) WLG to list depth $0$ NMS flows on $S^3$? 

\begin{lemma}\label{matrixN0}
Let $\phi_t$ be a depth $0$ NMS flow on $S^3$ with a simple WLG. Then there exists a simple WLG $G_1$ of $\phi_t$ such that
each gluing matrix between two saddle vertices is either  $\left(
                                                                                     \begin{array}{cc}
                                                                                       1 & 0 \\
                                                                                       0 & 1 \\
                                                                                     \end{array}
                                                                                   \right)$
                                                                                 or
                                                                                 $\left(
                                                                                   \begin{array}{cc}
                                                                                     -1 & 0 \\
                                                                                     0 & -1\\
                                                                                   \end{array}
                                                                                 \right)$.
\end{lemma}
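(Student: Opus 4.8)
The plan is to start from an arbitrary simple WLG $G$ of $\phi_t$ and to improve it by a better choice of \emph{meridian} coordinates, leaving the flow and the underlying graph untouched. In a simple WLG the only saddle vertices are of type $V_4$ or $V_5$ (Proposition \ref{filnghd}), and on every boundary torus of such a piece the longitude $l$ is, by the definitions in Section \ref{coordinate}, parallel to the trace of the corresponding invariant manifold, while the meridian $m$ still carries the $\mathbb{Z}\times\mathbb{Z}$ freedom recorded in Remark \ref{freecoor}. So the goal splits into two steps: first show that each gluing matrix on an edge joining two saddle vertices is already triangular with diagonal entries $\pm 1$; then use the meridian freedom to erase the remaining off-diagonal entry, producing $I$ or $-I$.

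The heart is the first step. Let $e$ be an edge joining two saddle vertices; orienting $G$ by the flow direction, $e$ glues the outgoing torus $T_0=\partial^{out}N_0$ to the incoming torus $T_1=\partial^{in}N_1$ along a common torus $T$. On $T$ sit two essential simple closed curves: the curve $\alpha = W^u(\gamma_0)\cap T$, which in the coordinates of $N_0$ is the longitude $l_0$, and the curve $\beta = W^s(\gamma_1)\cap T$, which in the coordinates of $N_1$ is the longitude $l_1$. Any point of $\alpha\cap\beta$ would lie simultaneously on $W^u(\gamma_0)$ and on $W^s(\gamma_1)$, so its orbit would be a heteroclinic trajectory connecting the saddle orbits $\gamma_0$ and $\gamma_1$; since $\phi_t$ has depth $0$ this is impossible, whence $\alpha\cap\beta=\emptyset$. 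Two disjoint essential simple closed curves on a torus are isotopic, so $[\,l_0\,]=\pm[\,l_1\,]$ under the gluing. Thus the gluing homeomorphism carries the longitude direction to the longitude direction, and $B_e$ is triangular with diagonal entries $\pm1$, i.e. of the form $\pm\left(\begin{smallmatrix}1 & b\\ 0 & 1\end{smallmatrix}\right)$ in $PSL(2,\mathbb{Z})$.

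For the second step, observe that the off-diagonal entry $b$ is precisely what is altered by the admissible moves $m\mapsto m+k\,l$ of Remark \ref{freecoor}; these fix the longitude direction, so they never spoil the conclusion of the first step. Because $S^3$ is simply connected, the underlying graph of $G$ is a tree, so I root it and clear the saddle--saddle edges one at a time from the leaf side, spending at each saddle piece one unit of its meridian freedom to kill the off-diagonal entry of its parent edge. The pants relation among the three meridians of a $V_4$ piece only forces compensating shifts on the edges running toward that piece's children, and those edges are cleared later from the far side, so no conflict arises; for a $V_5$ piece the two meridians behave the same way. The coordinates so obtained define the required simple WLG $G_1$ of $\phi_t$ in which every gluing matrix between two saddle vertices is $I$ or $-I$.

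I expect the genuine content to lie in the first step: the translation of the depth~$0$ hypothesis into the statement that the two invariant-manifold curves on the gluing torus are disjoint, hence isotopic, and therefore that the gluing preserves the longitude (fiber) direction. The second step is essentially bookkeeping, but it does require the care noted above with the pants relation and with the order in which the edges of the tree are cleared.
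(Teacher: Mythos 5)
Your argument is correct and is essentially the paper's own proof: the paper likewise derives $f(l^1)\cap l^2=\emptyset$ from the depth-$0$ hypothesis (so the gluing carries the longitude class to itself up to sign, since disjoint essential curves on a torus are isotopic) and then normalizes all saddle--saddle matrices to $\pm I$ by choosing meridians coherently along the tree, the only difference being that the paper propagates the meridian choices in the flow direction (entrance meridians either free on $\partial N_0$ or forced by the upstream piece, exit meridians then forced by the pants relation of Remark \ref{freecoor}) instead of clearing edges from a chosen root --- a purely organizational difference. One cosmetic slip: in the paper's convention the matrices act on the column $\left(\begin{smallmatrix} l \\ m \end{smallmatrix}\right)$ (compare the saddle-to-$A$/$R$ matrices $\left(\begin{smallmatrix} 1 & 0 \\ k & 1 \end{smallmatrix}\right)$ of Definition \ref{neatsimpleWLG}), so your intermediate triangular form should be lower triangular $\pm\left(\begin{smallmatrix} 1 & 0 \\ b & 1 \end{smallmatrix}\right)$ rather than upper; this does not affect the argument.
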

\begin{proof}
 By $4$ and $5$ of Proposition \ref{filnghd}, when $N$ is the 4th or 5th filtrating neighborhood, each longitude $l_i$ of $N$ is isotopic to the regular
  fibers of $N$ ($N$ is a Seifert manifold with an unique Seifert structure).

  First, let's consider  the local case.  If $N_1$ and $N_2$ are two  adjacent (along a torus $T$) filtrating neighborhoods
  such that $N_i$ is either   4th or 5th filtrating neighborhood in the flow $\phi_t$.
   Without loss of generality, we can suppose $\phi_t$ is transverse outward to $N_1$ and  inward to $N_2$ along $T$.
Let $l^i$ be the longitude of $N_i$ ($i\in\{1,2\}$) on $T$ and $f$ be the gluing homeomorphism from  $N_1$ to  $N_2$ along $T$.
   Since $\phi_t$ is a depth $0$ NMS flow, $f(l^1)\cap l^2 =\emptyset$. By choosing suitable
    meridian $m_2$ in $T$ ($T \subset N_2$), the gluing matrix can be either
                                                                                  $\left(
                                                                                     \begin{array}{cc}
                                                                                       1 & 0 \\
                                                                                       0 & 1 \\
                                                                                     \end{array}
                                                                                   \right)$
                                                                                 or
                                                                                 $\left(
                                                                                   \begin{array}{cc}
                                                                                     -1 & 0 \\
                                                                                     0 & -1\\
                                                                                   \end{array}
                                                                                 \right)$.

 Now let's turn to the global case. To prove the lemma, we only need to prove the claim: by choosing suitable meridians  in
all saddle filtrating neighborhoods,  the gluing matrix between two adjacent filtrating neighborhoods in $N_0$ is either
                                                                                 $\left(
                                                                                     \begin{array}{cc}
                                                                                       1 & 0 \\
                                                                                       0 & 1 \\
                                                                                     \end{array}
                                                                                   \right)$
                                                                                 or
                                                                                 $\left(
                                                                                   \begin{array}{cc}
                                                                                     -1 & 0 \\
                                                                                     0 & -1\\
                                                                                   \end{array}
                                                                                 \right)$.
This claim is followed by choosing suitable  meridians for each saddle filtrating neighborhood  $N$ in $N_0$ as follows.
\begin{itemize}
  \item For each saddle filtrating neighborhood  $N$ in $N_0$ and  an entrance boundary component $T^0$ of $N$:
  \begin{itemize}
    \item if $T^0\subset \partial{N_0}$,  the meridian on $T^0$ can be an arbitrary simple closed curve whose geometrical intersection number with the corresponding longitude is $1$;
    \item otherwise, $N$ is adjacent to another filtrating neighborhood $N'$ in $N_0$ along $T^0$ and the meridian in $T^0$ for $N$ is forced by $N'$ as the rule which we introduced in the local case.

  \end{itemize}
 \item For $\partial^{out}N$, the meridians on $\partial^{out}N$ is forced by the the meridians on  $\partial^{in} N$ under the rules introduced by the
 definitions of coordinates (see Section \ref{coordinate}).
\end{itemize}
\end{proof}

\begin{lemma}\label{notwoV5}
If $G$ is a simple WLG with at least two $V5$ vertices, then $G$ can't be associated to a depth $0$ NMS flow on $S^3$.
\end{lemma}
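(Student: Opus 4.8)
The plan is to read a Seifert fibration of $S^3$ off the flow and then count its exceptional fibers of order $2$, deriving a contradiction from Lemma \ref{top}. The key point is that each $V5$ piece contributes an exceptional fiber of multiplicity $2$, and that the depth $0$ hypothesis forces all these fiberings to fit together coherently.

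First I would record the local Seifert picture of the two saddle types that can occur in a simple WLG. A $V4$ neighborhood $F\times S^1$ is trivially Seifert fibered by the circles $\{pt\}\times S^1$, all regular, and its core $\gamma$ is a regular fiber. A $V5$ neighborhood $N=D^2\times S^1-W$ carries the twisted field $\widetilde{X}=(e^{i\frac{\theta}{2}}\overline{z},1)$: because of the half-twist $e^{i\frac{\theta}{2}}$, orbits near the core close up only after $\theta$ runs twice around, so $N$ is Seifert fibered with the core $\gamma=\{0\}\times S^1$ an exceptional fiber of multiplicity $2$ (this is exactly the $(2,1)$-cable/singular-fiber structure recorded in case $5$ of Proposition \ref{filnghd}). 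In the boundary coordinates of Section \ref{coordinate} the longitudes $l_i$ are the regular fibers.

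Next I would globalize. Let $N_0$ be the union of all saddle filtrating neighborhoods of the flow. Since the flow is depth $0$ and $G$ is simple, every saddle piece is a $V4$ or a $V5$, and along every torus between two saddle pieces the longitudes are the fibers. By Lemma \ref{matrixN0} (after choosing the meridians as there) the gluing matrices between saddle vertices are $\pm I$, so these gluings carry fibers to fibers; hence the local fiberings assemble into a Seifert fibration of $N_0$ whose only exceptional fibers are the $V5$ cores, each of order $2$. The complement $S^3-N_0$ is a disjoint union of solid tori, namely the filtrating neighborhoods of the attractors ($A$) and repellers ($R$); each is glued along a fibered boundary torus and so can be Seifert-fibered compatibly, its core becoming a fiber of the extended fibration. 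This yields a Seifert fibration of $S^3$ in which every $V5$ core survives as an exceptional fiber of multiplicity $2$, since those cores lie in the interior of $N_0$ and are untouched by the capping.

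Finally I would invoke Lemma \ref{top}: any Seifert fibration of $S^3$ is $M(0,0;\alpha_1/\beta_1,\alpha_2/\beta_2)$, so it has at most two exceptional fibers, and since $|H_1(S^3)|=1$ is divisible by $\gcd(\alpha_1,\alpha_2)$ the two multiplicities must be coprime. If $G$ had two $V5$ vertices, the fibration constructed above would contain two exceptional fibers of multiplicity $2$, forcing $2\mid |H_1(S^3)|$, a contradiction. Hence a simple WLG with at least two $V5$ vertices cannot be associated with a depth $0$ NMS flow on $S^3$. The step I expect to be the main obstacle is the assembly: one must be sure the depth $0$ hypothesis genuinely forces the fiberings of adjacent saddle pieces to agree — this is precisely where Lemma \ref{matrixN0} and the parallelism of the invariant-manifold longitudes are essential — and that extending the fibration over the capping solid tori neither destroys nor merges the two order-$2$ exceptional fibers coming from the $V5$ cores; the homological computation for $S^3$ is then routine.
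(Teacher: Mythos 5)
Your local Seifert analysis of the $V4$ and $V5$ pieces and the assembly of the fibration over the saddle piece $N_0$ via Lemma \ref{matrixN0} are sound, but there is a genuine gap at the capping step, exactly where you yourself flagged the main obstacle. The assertion that each attractor/repeller solid torus ``is glued along a fibered boundary torus and so can be Seifert-fibered compatibly, its core becoming a fiber of the extended fibration'' is false precisely when the gluing identifies a meridian of the solid torus with a regular fiber of $N_0$: for that filling slope the fibration of $N_0$ does not extend over the solid torus at all, so no Seifert fibration of $S^3$ containing your two order-$2$ exceptional fibers is produced, and the count via Lemma \ref{top} never gets off the ground. This meridian-equals-fiber gluing is not excluded by the depth $0$ hypothesis --- it genuinely occurs for depth $0$ flows (it is exactly the ``special vertex'' situation in the first case of the proof of Lemma \ref{simpleWLG1}, where $N_0\cup V_1$ becomes a connected sum of solid tori). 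To close the gap you must rule it out in the presence of a $V5$ vertex: the union of the fibers over an embedded arc in the base orbifold from a boundary circle to an order-$2$ cone point is a vertical M\"obius band whose boundary is a regular fiber on $\partial N_0$, and capping that fiber with a meridian disk would embed $\mathbb{R}P^2$ in $S^3$, which is impossible; this is precisely the argument the paper gives later in Lemma \ref{simpleWLG3}. With that case excluded, every filling slope is distinct from the fiber slope, your extension goes through, and Lemma \ref{top} together with the divisibility $\gcd(\alpha_1,\alpha_2)\mid |H_1(S^3)|=1$ does finish the proof.

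It is worth noting that the paper's own proof sidesteps the extension problem entirely by arguing inside $N_0$: since $N_0\cong M(0,n;\frac{1}{2},\dots,\frac{1}{2})$ with at least two singular fibers of multiplicity $2$, the preimage of an embedded arc joining two order-$2$ cone points of the base orbifold is two vertical M\"obius bands glued along a common boundary fiber, i.e.\ an embedded Klein bottle in $N_0\subset S^3$ --- a contradiction requiring no analysis of the complementary solid tori at all. Your route, once repaired as above, is correct and has the merit of explaining the global Seifert picture, but it needs the extra $\mathbb{R}P^2$ step, whereas the Klein bottle argument is strictly more local and shorter.
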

\begin{proof}
Actually this is a quick consequence of
Lemma \ref{top}. More precisely, if $G$ admits at least two $V5$ vertices, the corresponding saddle simple piece $N_0$ associated to $G$ is homeomorphic to
$M(0,n; \frac{1}{2},\dots, \frac{1}{2})$ where there are $m (m\geq2)$ singular fibers labeled by $\frac{1}{2}$. Then there exists an embedded Klein bottle in $N_0$, but a Klein bottle can't be
embedded into $S^3$.
Therefore, $G$ can't be associated to a depth $0$ NMS flow on $S^3$.
\end{proof}

The following corollary is a direct consequence of Lemma \ref{matrixN0} and Lemma \ref{notwoV5}.

\begin{corollary}\label{topN0}
 Let $\phi_t$ be a depth $0$ NMS flow on $S^3$ with a simple WLG and $N_0$ is a saddle  simple piece of $\phi_t$.
 \begin{enumerate}
   \item  If there doesn't exist a twisted periodic orbit in $\phi_t$, then $N_0$ is homeomorphic to $F_n \times S^1$ ($n\geq 3$) where
    $F_n$ is an $n$  punctured 2-sphere.
  Moreover, the periodic orbits and all the longitude of  $\partial N_0$ can be regarded as regular fibers of $N_0$.
   \item  If there  exists a twisted periodic orbit in $\phi_t$, then $N_0$ is homeomorphic to $\Sigma(0,n; \frac{1}{2})$ ($n\geq 2$).  Moreover, we have:
   \begin{enumerate}
     \item the twisted periodic orbit is the singular fiber of $N_0$;
     \item the periodic orbits and all the longitudes of  $\partial N_0$ can be regarded
  as regular fibers of $N_0$.
   \end{enumerate}
  \end{enumerate}
\end{corollary}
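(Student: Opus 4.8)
The plan is to use the two preceding lemmas to reduce everything to a statement about Seifert fibrations, and then to read off the homeomorphism type of $N_0$ from its fibred structure. Since $G$ is a simple WLG it has no $V_1$, $V_2$ or $V_3$ vertices, so every saddle vertex of the saddle simple piece is either a $V_4$ or a $V_5$ vertex. By cases $4$ and $5$ of Proposition \ref{filnghd} together with the discussion in Section \ref{coordinate}, each such piece carries a natural Seifert fibration whose regular fiber is isotopic to the chosen longitude on every boundary torus: a $V_4$ piece $F\times S^1$ is the trivial circle bundle over a pair of pants (three boundary circles, no exceptional fiber), while a $V_5$ piece $D^2\times S^1-W$ is fibred over an annulus with a single cone point of order $2$, the twisted orbit $\gamma$ being the exceptional fiber and $T_1,T_2$ the two boundary tori. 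I first invoke Lemma \ref{matrixN0} to pass to a simple WLG $G_1$ of $\phi_t$ in which every gluing matrix between two saddle vertices is $\pm I$. Because such a matrix sends longitude to $\pm$longitude, it carries regular fiber to regular fiber, so the individual Seifert fibrations match along every interior gluing torus and assemble into one global Seifert fibration of $N_0$; no new exceptional fibers are created, since the gluing identifies fibred neighborhoods of regular fibers.

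It then remains to identify the fibred manifold and count its boundary tori, using that the underlying graph of a simple WLG factor is a tree (Proposition \ref{SWLGD}), so the saddle simple piece corresponds to a subtree. Suppose first that $\phi_t$ has no twisted periodic orbit. Then there is no $V_5$ vertex, so $N_0$ is obtained by gluing $k\ge 1$ copies of $F\times S^1$ fibrewise along boundary tori following a tree. The base is a union of $k$ pairs of pants glued along circles according to a tree; since a tree creates no handles the base is planar, and counting boundary circles gives $3k-2(k-1)=k+2$. Hence the base is the $n$-holed sphere $F_n$ with $n=k+2\ge 3$, and $N_0$ is an orientable circle bundle over a surface with boundary, which is a product; thus $N_0\cong F_n\times S^1$. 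The regular fibers are precisely the saddle orbits and the longitudes on $\partial N_0$, which gives the ``moreover'' clause.

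Now suppose $\phi_t$ has a twisted periodic orbit, i.e. $N_0$ contains a $V_5$ vertex. By Lemma \ref{notwoV5} there is at most one $V_5$ vertex, so there is exactly one, and all remaining saddle vertices are $V_4$. Gluing the single $V_5$ piece (base an annulus with one cone point of order $2$) to $k\ge 0$ copies of $F\times S^1$ fibrewise along a tree produces a global Seifert fibration over a planar base carrying exactly one cone point of order $2$. Counting boundary circles now gives $2+3k-2k=k+2$ boundary tori, so the base is $F_n$ with $n=k+2\ge 2$. Therefore $N_0\cong\Sigma(0,n;\frac{1}{2})$; the unique exceptional fiber is the twisted orbit $\gamma$, which is statement (a), while the remaining orbits and the longitudes on $\partial N_0$ are regular fibers, which is statement (b).

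The one point that requires care is the claim that the fibrations glue: this is exactly where the hypothesis that $\phi_t$ is a depth $0$ flow enters, through Lemma \ref{matrixN0}, which guarantees that we may \emph{simultaneously} normalise all interior gluing matrices to $\pm I$ rather than only one of them at a time. The remaining verifications are the bookkeeping that a tree of pairs of pants (with at most one annulus-with-cone-point) has genus $0$ and the stated number of boundary tori, and that an orientable circle bundle over a surface with nonempty boundary is a product; both are routine once the global Seifert structure is in place.
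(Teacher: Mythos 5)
Your proof is correct and takes essentially the same route the paper intends: the paper states this corollary without a written proof, calling it a ``direct consequence'' of Lemma \ref{matrixN0} and Lemma \ref{notwoV5}, and your argument supplies precisely the details that one-line justification presupposes --- the $\pm I$ normalisation making the Seifert fibrations of the $V_4$ and $V_5$ pieces match fiber-to-fiber along interior tori, Lemma \ref{notwoV5} forcing exactly one exceptional piece in the twisted case, and the tree bookkeeping giving the boundary count $n=k+2$ (hence $n\geq 3$ and $n\geq 2$ respectively). No gap: your fiberwise-gluing and product-bundle verifications are the standard facts the paper leaves implicit.
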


The following two theorems (Theorem \ref{simpleWLG} and Theorem \ref{generalWLG}) explain very well about the relationship between depth $0$ NMS flows and neat WLG. Actually, they
 provide some global descriptions to use neat WLG to list  Depth $0$ NMS flows on $S^3$.

\begin{theorem}\label{simpleWLG}
A depth $0$ NMS flow $\phi_t$ on $S^3$ with a simple WLG always admits a neat simple WLG $G$.
Conversely, for a given neat simple WLG $G$,  there exists a depth $0$ NMS flow on $S^3$ with WLG $G$.
\end{theorem}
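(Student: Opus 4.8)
The statement is a biconditional, and the plan is to treat the two implications separately, reusing the structural results already established.

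\emph{The forward direction.} Suppose $\phi_t$ is a depth $0$ NMS flow with a simple WLG. First I would invoke Lemma \ref{notwoV5}, which forces the graph to carry at most one $V5$ vertex; thus either all saddle vertices are $V4$ (case $1$ of Definition \ref{neatsimpleWLG}) or exactly one is $V5$ and the rest are $V4$ (case $2$). Next I would apply Lemma \ref{matrixN0} to pass to a simple WLG of $\phi_t$ in which every gluing matrix between two saddle vertices equals $\pm\begin{pmatrix}1&0\\0&1\end{pmatrix}$; this is precisely condition (a) of each case, and it is the only normalization made inside the saddle simple piece. By Corollary \ref{topN0} the saddle simple piece $N_0$ is then $F_n\times S^1$ (no twisted orbit) or $\Sigma(0,n;\tfrac12)$ (one twisted orbit), and in both cases the periodic orbits together with the longitudes $l_i$ of $\partial N_0$ are regular fibers of a Seifert structure on $N_0$.

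\emph{Controlling the $A,R$ edges.} The heart of the argument is the matrices on edges joining a saddle vertex to an $A$ or $R$ vertex. Each such vertex is a solid torus, the ambient manifold is $S^3$, and by Lemma \ref{top}(2) every Seifert structure on $S^3$ lives over $S^2$ with at most two exceptional fibers. I would extend the fibration of $N_0$ across the filling solid tori: after taking the longitude of each filling solid torus to be the regular fiber, a filling is fiber-preserving exactly when its matrix has the form $\begin{pmatrix}\pm1&0\\k&\pm1\end{pmatrix}$ (the top row fixing $l_0\mapsto\pm l_1$), in which case the core becomes a regular fiber. Since $S^3$ admits at most two exceptional fibers, all but at most two fillings in case $1$ (resp. all but one in case $2$, the remaining exceptional fiber being the twisted orbit, i.e. the $\tfrac12$ singular fiber of $\Sigma(0,n;\tfrac12)$) may be taken fiber-preserving; this yields condition (b) of case $1$ and condition (a) of case $2$. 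For the exceptional edges I would collapse all fiber-preserving fillings, reducing $N_0$ to $T^2\times I$ (case $1$) with the two exceptional solid tori glued to its ends. The composite gluing of the two solid-torus meridians, read along the unique oriented path, is exactly $\overline A\,\overline B$, and two solid tori glued by an element of $PSL(2,\mathbb{Z})$ give $S^3$ iff that element sends meridian to longitude, i.e. iff $\overline A\,\overline B=\pm\begin{pmatrix}0&-1\\1&0\end{pmatrix}$, which is condition (c). In case $2$ the same bookkeeping, now using that the $(2,1)$-cable of the $V5$ neighborhood from Proposition \ref{filnghd} produces the $\tfrac12$ fiber, forces the single exceptional matrix into the asserted form $\pm\begin{pmatrix}1&-2\\1-t&2t-1\end{pmatrix}$ or $\pm\begin{pmatrix}2t-1&2\\t-1&1\end{pmatrix}$. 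Hence $\phi_t$ admits a neat simple WLG.

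\emph{The converse.} Given a neat simple WLG $G$, I would reverse this construction: realize each $V4$ (resp. $V5$) vertex by the standard type $4$ (resp. type $5$) filtrating neighborhood of Proposition \ref{filnghd}, and glue along the saddle–saddle edges by the prescribed $\pm I$ maps. Because these gluings match regular fibers to regular fibers, the images of the stable and unstable separatrices remain disjoint, so the flow on $N_0$ is NMS with no saddle-to-saddle heteroclinic trajectory, and $N_0\cong F_n\times S^1$ or $\Sigma(0,n;\tfrac12)$ as in Corollary \ref{topN0}. I would then cap the boundary tori by standard attractor/repeller solid-torus flows using the prescribed matrices; the fiber-preserving matrices extend the $S^1$-fibration and present each core as a (regular-fiber) periodic orbit with a transverse gluing. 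Finally, conditions (b),(c) (resp. the case $2$ analog), via Lemma \ref{top}, guarantee that the Seifert invariants of the resulting closed fibered manifold are exactly those of $S^3$, so the manifold is $S^3$; by the coordinate conventions of Section \ref{coordinate} its WLG is $G$. The hardest part will be the Seifert-theoretic bookkeeping that pins the closed manifold to $S^3$ and matches it to the exact matrix normal forms—in particular tracking the global sign ambiguities in $PSL(2,\mathbb{Z})$ (precisely what the signature $sign(V,V_0)$ of Definition \ref{signvet} records) and handling the twisted case, where the $(2,1)$-cable forces the anomalous entries $2$ and $2t-1$.
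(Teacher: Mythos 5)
Your outline reproduces the paper's own proof almost step for step: the paper also begins with Lemma \ref{matrixN0} to normalize the saddle--saddle matrices to $\pm I$ (condition (a)), uses Lemma \ref{notwoV5} and Corollary \ref{topN0} to split into the untwisted case $N_0\cong F_n\times S^1$ and the twisted case $N_0\cong\Sigma(0,n;\tfrac12)$, invokes Lemma \ref{top} to bound the number of non-fiber-preserving fillings, and proves the converse by exactly the explicit construction you describe (these are Lemmas \ref{simpleWLG2} and \ref{simpleWLG4}; your converse half is sound, granting the usual care that the $\pm I$ gluings can be chosen with $f(l^1)$ disjoint from, not merely isotopic to, $l^2$).

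There is, however, one genuine gap in your forward direction. The step ``since $S^3$ admits at most two exceptional fibers, all but at most two fillings may be taken fiber-preserving'' silently assumes that every filling either preserves fibers or extends the fibration of $N_0$ with an exceptional fiber at its core, i.e.\ that the $S^1$-fibration extends to a global Seifert fibration of $S^3$. This fails precisely when some filling solid torus $V_1$ has its \emph{meridian} glued to a regular fiber of $N_0$: Dehn filling along the fiber slope destroys the fibration altogether (the would-be fiber bounds a disk), so Lemma \ref{top} imposes no constraint and your count of non-fiber-preserving fillings is unjustified. The paper treats this degenerate case separately, and it is not vacuous. In the untwisted case (the first case in the proof of Lemma \ref{simpleWLG1}), such a filling makes $N_0\cup V_1$ a connected sum of $n-1$ solid tori, and recognizing $S^3$ via its prime decomposition then forces all remaining fillings to be fiber-preserving, with the two exceptional edges of condition (c) ending at $V_1$ and $V_2$ --- so this degenerate filling is one of the two ways the exceptional matrices of Definition \ref{neatsimpleWLG} actually arise, and omitting it loses part of the statement. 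In the twisted case the degenerate filling must instead be \emph{excluded}, which the paper does by exhibiting an embedded $\mathbb{R}P^2$ in $V_i\cup N_0\subset S^3$ (proof of Lemma \ref{simpleWLG3}); your sketch never rules it out before applying the Seifert count. Once you add these two subcases, your argument coincides with the paper's.
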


Theorem \ref{simpleWLG} is a consequence of the following four lemmas (Lemma \ref{simpleWLG1}, Lemma \ref{simpleWLG2}, Lemma \ref{simpleWLG3} and
Lemma \ref{simpleWLG4}). Notice that there exists a
 hidden principle:   if we fix the coordinates of each filtrating neighborhood, then the gluing matrix between two filtrating neighborhoods is fixed.

\begin{lemma}\label{simpleWLG1}
Let  $\phi_t$ be a depth $0$ NMS flow  on $S^3$ with a simple WLG  and without any twisted saddle periodic orbits,  then $\phi_t$
  can be associated to a neat simple WLG $G$  without any $V5$ vertex (the first kind of neat simple WLG in  Definition \ref{neatsimpleWLG}).
\end{lemma}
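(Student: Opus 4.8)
The plan is to construct, from a depth $0$ NMS flow $\phi_t$ on $S^3$ with a simple WLG and no twisted saddle orbits, a neat simple WLG of the first kind. By Corollary \ref{topN0}(1), the saddle simple piece $N_0$ is homeomorphic to $F_n \times S^1$ with $n \geq 3$, and all periodic orbits and boundary longitudes are regular fibers. Since $\phi_t$ has a simple WLG, all saddle vertices are indexed by $V_4$ or $V_5$; the absence of twisted orbits rules out $V_5$ (by Corollary \ref{topN0}, a $V_5$ vertex forces a twisted singular fiber). So the saddle vertices are all $V_4$, which already matches the top-level requirement of the first kind of neat simple WLG in Definition \ref{neatsimpleWLG}.

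\textbf{Normalizing the interior gluing matrices.}
First I would apply Lemma \ref{matrixN0} to choose a simple WLG $G_1$ of $\phi_t$ in which every gluing matrix between two saddle vertices is $\pm I = \left(\begin{smallmatrix} 1 & 0 \\ 0 & 1 \end{smallmatrix}\right)$ or $\left(\begin{smallmatrix} -1 & 0 \\ 0 & -1 \end{smallmatrix}\right)$. This discharges condition (a) of the first kind. The remaining task is to control the matrices on edges joining a saddle vertex to an $A$ or $R$ vertex, i.e. the edges leaving $N_0$ through its boundary tori $\partial N_0$.

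\textbf{The boundary matrices and the exceptional pair.}
For each boundary torus $T \subset \partial N_0$, the longitude $l$ is a regular fiber of $N_0$, and the adjacent $A$- or $R$-neighborhood is a solid torus glued along $T$. The gluing must send the meridian of that solid torus to some curve on $T$; because $N_0 = F_n \times S^1$ is a product, the Seifert data $M(0,n;\,\cdot\,)$ of the closed manifold $S^3$ is fixed, and Lemma \ref{top}(2) pins down the Euler-number constraint. The key point is that a solid torus glued to a fiber-direction longitude, in a way that fills $F_n\times S^1$ up to $S^3$, forces each such gluing matrix to be $\pm\left(\begin{smallmatrix} 1 & 0 \\ k & 1 \end{smallmatrix}\right)$ — except that the global requirement that the total space be $S^3$ (rather than some other Seifert fibered space over $S^2$) cannot be met by all boundary fillings being of this lower-triangular ``integral surgery'' form: exactly two of the fillings must destroy the Seifert fibration, which is recorded by the condition that the reordered product $\overline{A}\,\overline{B} = \pm\left(\begin{smallmatrix} 0 & -1 \\ 1 & 0 \end{smallmatrix}\right)$ along the path between the two exceptional vertices. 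I would verify this by computing the Seifert invariants: all-lower-triangular fillings yield $M(0,n;\beta_1/\alpha_1,\dots)$ with $S^1$-fibration preserved, which is never $S^3$ unless two fibers are ``de-fibered,'' and $S^3 = M(0,0;\alpha_1/\beta_1,\alpha_2/\beta_2)$ forces precisely the off-diagonal form on the two exceptional edges.

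\textbf{The converse and the main obstacle.}
For the converse, given a neat simple WLG $G$ of the first kind, I would reverse this construction: glue solid tori to $F_n\times S^1$ according to the prescribed matrices, using the two exceptional fillings to realize $S^3$ via Lemma \ref{top}(2), and equip $F_n\times S^1$ with the suspension flow of a Morse–Smale map so that the fiber-parallel curves are the invariant manifolds; transversality and the depth $0$ condition follow because no saddle–saddle heteroclinics are created within a product piece. \emph{The hard part} will be the forward direction's bookkeeping: showing that a \emph{consistent} global choice of meridians (not merely the local choices of Lemma \ref{matrixN0}) can be made so that simultaneously all saddle–saddle matrices are $\pm I$, all but two saddle–$A/R$ matrices are lower-triangular $\pm\left(\begin{smallmatrix}1&0\\k&1\end{smallmatrix}\right)$, and the two exceptional edges carry the required product. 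This is a compatibility argument along the tree $G$, and the main subtlety is that changing a longitude/meridian to simplify one edge may perturb a neighboring matrix — so I expect to propagate the normalization outward from a chosen root vertex, using the tree structure to guarantee no cycles of constraints, exactly as in the local-to-global passage in the proof of Lemma \ref{matrixN0}.
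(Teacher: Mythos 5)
Your overall skeleton matches the paper's: you invoke Corollary \ref{topN0} to identify the saddle simple piece $N_0\cong F_n\times S^1$, use Lemma \ref{matrixN0} to discharge condition (a), and then analyze how the solid tori $V_1\sqcup\dots\sqcup V_n=\overline{S^3-N_0}$ are glued on. But your analysis of the boundary gluings has a genuine gap. The paper's proof splits into two cases: (i) some $V_i$ is glued so that its \emph{meridian} goes to a regular fiber of $N_0$ --- then the fibration of $N_0$ does not extend over $S^3$ at all; instead $N_0\cup V_1$ is a connected sum of solid tori, and the requirement that the remaining fillings produce $S^3$ is what forces them to be fiber-preserving; (ii) no such $V_i$, in which case the fibration extends to a Seifert structure on $S^3$ and Lemma \ref{top} bounds the number of singular fibers by two, which locates the exceptional edges. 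Your argument lives entirely inside case (ii): by positing that the filled-up manifold is ``Seifert fibered over $S^2$'' you silently exclude the fiber-killing fillings of case (i), which do occur (they are precisely the special-vertex situation discussed around Theorem \ref{generalWLG}), and nothing in your proposal identifies the exceptional pair or verifies condition (c) for that case.

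Moreover, your Seifert-invariant computation is false as stated. All-lower-triangular matrices $\pm\left(\begin{smallmatrix} 1 & 0 \\ k & 1 \end{smallmatrix}\right)$ correspond to integer, fiber-preserving fillings, and when the total Euler number is $\pm 1$ the result \emph{is} $S^3$ (the Hopf fibration, with no exceptional fibers at all); so ``all-lower-triangular fillings \dots{} never $S^3$ unless two fibers are de-fibered'' is wrong, and the asserted dichotomy ``exactly two fillings must destroy the Seifert fibration'' is not the operative mechanism. In the paper's case (ii) the fibration is never destroyed: at most two fillings insert singular fibers, possibly of multiplicity one, and since $M(0,0;\alpha_1/\beta_1,\alpha_2/\beta_2)$ in Lemma \ref{top} permits $\alpha_i=1$, the borderline Hopf-fibration situation is brought into neat form purely by re-choosing longitudes on the $V_i$ and the section-meridians on $\partial N_0$ (the $\mathbb{Z}\times\mathbb{Z}$ freedom of Remark \ref{freecoor}), with two designated edges carrying the antidiagonal product --- not by a topological obstruction. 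The actual verification of (c) in the paper runs through the observation that $N_0\cup V_3\cup\dots\cup V_n\cong T^2\times[0,1]$, so the last two fillings constitute a genus-one Heegaard splitting of $S^3$; this is the concrete computation your ``off-diagonal form is forced'' sentence would need. Two smaller points: your closing paragraph on the converse belongs to Lemma \ref{simpleWLG2}, not to this statement, and your proposed propagation of meridian choices along the tree is sound but is exactly the content of Lemma \ref{matrixN0}, already in hand.
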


\begin{proof}
First of all, by Lemma \ref{matrixN0}, we can construct a simple WLG $G_1$ which satisfies (a) of $1$ in  Definition \ref{neatsimpleWLG}.
Suppose $V_1\sqcup \dots \sqcup V_n= \overline{S^3 - N_0}$ where each $V_i$ is a tubular neighborhood of an attractor or a repeller.
We fix the coordinates of the filtrating neighborhoods in $N_0$ which are associated to $G_1$.

  If there exists one of $\{V_1, \dots, V_n\}$, for instance, $V_1$ such that $V_1$ is glued to $N_0$ sending a meridian of $\partial V_1$ to a
   regular fiber of $N_0$, by Corollary \ref{topN0}, $N_0 \cup V_1$ is homeomorphic to the connected sum of $n-1$ solid tori. Moreover, since
   $(N_0 \cup V_1)\cup V_2 \dots \cup V_n$ is homeomorphic to $S^3$, the gluing map between $V_i$ ($i\in\{2,\dots,n\}$) and $N_0$
   should preserving fibers. It is not difficult to choose  suitable longitude for the longitude of every $V_i$ ($i\in \{1,\dots,n\}$)  such that the new
    WLG $G$ satisfies (b) and (c) of $1$ in Definition \ref{neatsimpleWLG}. Therefore, $G$ is a
    neat simple WLG.  Notice that in
   this case, the two exceptional edges admit $V_1$ and $V_2$ as two ends respectively.

  Otherwise, $N_0 \cup V_1\cup \dots \cup V_n$ endows $S^3$ a Seifert structure.  Lemma \ref{top} tells us that a Seifert structure
   of $S^3$ at most contains two singular fibers. Without loss of generality, suppose $V_1$ and $V_2$ contain all the singular fibers.
   Naturally, the two exceptional edges admit $V_1$ and $V_2$ as two ends respectively. By choosing suitable longitudes to $V_3,\dots, V_n$, the gluing map
   from $V_i$ ($i\geq 3$) to $N_0$ preserving fibers. Then  the gluing matrices of the corresponding WLG $G$ satisfy (b) of $1$ in
    Definition \ref{neatsimpleWLG}.  Since $G$ satisfies (a) of $1$ in Definition \ref{neatsimpleWLG} and Corollary \ref{topN0},
    $N_0 \cup V_3 \cup \dots \cup V_n$ is
   homeomorphic to $T^2 \times [0,1]$, one can naturally check that $G$ satisfies (c) of $1$ in Definition \ref{neatsimpleWLG}.
\end{proof}

\begin{lemma}\label{simpleWLG2}
If $G$ is a neat simple WLG without any $V5$ vertex, then there exits a depth $0$ NMS flow $\phi_t$ on $S^3$ with WLG $G$.
\end{lemma}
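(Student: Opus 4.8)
The plan is to reverse the construction of Lemma \ref{simpleWLG1}: starting from the abstract data of $G$, I would glue together standard local models and verify that the resulting closed flow lives on $S^3$ and realizes $G$ as its WLG. Concretely, to each $V_4$ vertex I attach a copy of the case-$4$ filtrating neighborhood $F\times S^1$ of Proposition \ref{filnghd} carrying its standard NMS flow (one saddle periodic orbit, with $\partial^{in}$ and $\partial^{out}$ as prescribed), and to each $A$ (resp. $R$) vertex a solid torus with the standard attracting (resp. repelling) flow. Each oriented edge $e$ prescribes a gluing of two boundary tori, and I would realize the matrix $B_e$ by a homeomorphism carrying flowlines to flowlines; near a regular level torus the flow is just $T^2\times(-\epsilon,\epsilon)$ with the product field, so any prescribed $B_e$ is realizable by such a flow-preserving map, and the assembled vector field is automatically a smooth NMS field.

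First I would assemble the saddle simple piece $N_0$. Since $G$ has no $V_5$ vertex, every saddle vertex is $V_4$, and by (a) of Definition \ref{neatsimpleWLG} the gluing matrix across each saddle--saddle edge is $\pm\begin{pmatrix}1&0\\0&1\end{pmatrix}$. In the coordinates of Section \ref{coordinate} the longitude $l_i$ is the fiber direction of $F\times S^1$ and is parallel to the invariant manifolds, so such a gluing is fiber-preserving. Hence the pants-times-circle pieces glue fiberwise, and $N_0$ is the circle bundle over the surface obtained by gluing the pairs of pants along their boundary circles; as the base is planar and orientable this is $F_n\times S^1$, where $n$ is the number of $A$ and $R$ vertices, in agreement with Corollary \ref{topN0}. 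Moreover, on each saddle--saddle torus the outgoing $W^u$ curve of one saddle and the incoming $W^s$ curve of the next are both fiber-parallel, hence isotopic to disjoint parallel curves; so they do not meet on the torus and no heteroclinic trajectory connects two saddles. This is exactly the depth-$0$ condition.

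Next I would cap $N_0$ by the solid tori attached at the $A$ and $R$ vertices. For each of the $n-2$ generic edges the matrix $\begin{pmatrix}\pm1&0\\k&\pm1\end{pmatrix}$ sends the fiber $l$ to $\pm l$, so the filling is fiber-compatible and merely fills in one boundary circle of the base; performing all $n-2$ of them reduces $F_n\times S^1$ to $F_2\times S^1\cong T^2\times[0,1]$. It then remains to cap the two ends at the vertices $V^1,V^2$ carried by the exceptional edges. Here I would invoke Lemma \ref{top}: the manifold $T^2\times[0,1]$ closed off by two solid tori is a Seifert manifold over $S^2$ with at most two exceptional fibers, and the hypothesis $\overline{A}\,\overline{B}=\begin{pmatrix}0&-1\\1&0\end{pmatrix}$ (or its inverse) says precisely that the meridian disks of the two capping solid tori are dual across $T^2\times[0,1]$, so that the resulting Seifert invariants are those of $M(0,0;\alpha_1/\beta_1,\alpha_2/\beta_2)=S^3$. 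Hence the closed manifold is $S^3$.

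Finally, since every piece carries an NMS flow transverse to its boundary and all gluings preserve flowlines, the assembled flow $\phi_t$ is a depth-$0$ NMS flow on $S^3$, and by construction its Lyapunov graph with the prescribed vertex labels and gluing matrices is exactly $G$. The step I expect to be the main obstacle is the last topological verification: matching the Seifert invariants produced by the two exceptional solid-torus fillings against the characterization $S^3=M(0,0;\alpha_1/\beta_1,\alpha_2/\beta_2)$ of Lemma \ref{top}. This is where the precise normalization $\overline{A}\,\overline{B}=\begin{pmatrix}0&-1\\1&0\end{pmatrix}$ in Definition \ref{neatsimpleWLG} is used, and checking that this bookkeeping closes up to $S^3$ rather than to a lens space or $S^1\times S^2$ is the crux.
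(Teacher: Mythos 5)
Your proposal is correct and takes essentially the same route as the paper's proof: build the saddle simple piece $N_0\cong F_n\times S^1$ from condition (a), fill the $n-2$ generic boundary tori by solid tori according to condition (b) so that the result is $T^2\times[0,1]$, and cap the two exceptional ends using condition (c) so that the genus-one gluing closes up to $S^3$ with the assembled flow a depth $0$ NMS flow realizing $G$. Your Seifert-fibration bookkeeping and the fiber-parallel disjointness argument for the depth-$0$ condition merely spell out verifications the paper leaves as routine.
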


\begin{proof}
Under the assumption of (a) of $1$ in Definition \ref{neatsimpleWLG}, one can easily build saddle simple piece $N_0$ associated to $G$  such
 that $N_0$ satisfies the following conditions.
 \begin{itemize}
   \item $N_0$ is homeomorphic to $F_n \times S^1$ ($n\geq 3$) where $F_n$ is a $n$ punctured 2-sphere.
   \item There doesn't exist a heteroclinic trajectory connecting saddle orbits in $N_0$.
   \item The coordinates restricted to $\partial N_0$ is standard in the following sense. Every longitude  is parallel to a fiber of $N_0$
   and all meridians  restricted to $\partial N_0$ bound a surface $\Sigma$ transverse to the fibers in $N_0$.
 \end{itemize}

Then one can glue $N_0$ to  $n-2$  filtrating neighborhoods of attractors and repellers $V_3, \dots, V_n$ to a new compact 3-manifold $N_1$ with flow
 such that the gluing matrices between
$V_3 \cup \dots \cup V_n$ and $N_0$ satisfy (b) of $1$ in Definition \ref{neatsimpleWLG}.
 Notice the position of the coordinates restricted to $\partial N_0$ and the matrices shown in (b) of $1$ in Definition \ref{neatsimpleWLG},
 $N_1$ is homeomorphic to $T^2 \times [0,1]$.

In the end, we glue $N_1$ to  two   filtrating neighborhoods   associated to the exceptional attractor and repeller vertices
such that the gluing matrices satisfy (c) of $1$ in Definition \ref{neatsimpleWLG}. Then we obtain a closed three manifold $M$ with flow $\phi_t$.
 Notice that $N_0 \cong T^2 \times [0,1]$ and $G$ satisfies (a) and (c) of $1$ in Definition \ref{neatsimpleWLG},
 one can easily check that $M$ is homeomorphic to $S^3$ and $\phi_t$ is a depth $0$ NMS flow with WLG $G$.
\end{proof}

\begin{lemma} \label{simpleWLG3}
Let $\phi_t$ be a depth $0$ NMS flow  on $S^3$ with twisted saddle periodic orbits and with a simple WLG.
Then  there exists a neat simple WLG $G$ with a $V5$ vertex (the second kind of neat simple WLG in  Definition \ref{neatsimpleWLG}) of $\phi_t$.
\end{lemma}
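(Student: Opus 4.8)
The plan is to parallel the structure of Lemma \ref{simpleWLG1}, but now working with the twisted geometry $\widetilde{V}$ coming from case 5 of Proposition \ref{filnghd}. First I would invoke Lemma \ref{notwoV5}: since $\phi_t$ is a depth $0$ NMS flow on $S^3$ and has at least one twisted saddle periodic orbit, its simple WLG cannot carry two or more $V5$ vertices, so there is exactly one $V5$ vertex and every other saddle vertex is a $V_4$. This immediately puts us in the topological regime of part (2) of Corollary \ref{topN0}: the saddle simple piece $N_0$ is homeomorphic to $\Sigma(0,n;\tfrac{1}{2})$ with $n\geq 2$, the twisted orbit is the unique singular fiber, and all periodic orbits together with all longitudes of $\partial N_0$ are regular fibers of this Seifert structure.

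Next I would normalise the gluing matrices between saddle vertices. By Lemma \ref{matrixN0} I may choose a simple WLG $G_1$ of $\phi_t$ in which every gluing matrix between two saddle vertices is $\left(\begin{smallmatrix}1&0\\0&1\end{smallmatrix}\right)$ or $\left(\begin{smallmatrix}-1&0\\0&-1\end{smallmatrix}\right)$; this is precisely the requirement that the matrix between two saddle vertices agree with the first case of Definition \ref{neatsimpleWLG}. Fixing these coordinates on the pieces inside $N_0$, I then attach the tubular neighborhoods $V_1,\dots,V_n$ of the attractors and repellers that make up $\overline{S^3-N_0}$. As in the gradient-free analysis of Lemma \ref{simpleWLG1}, all but one of the gluing maps can be arranged to preserve fibers: since $N_0\cup V_1\cup\dots\cup V_n\cong S^3$ and $N_0$ already carries the half-integer singular fiber, Lemma \ref{top} says $S^3$ admits a Seifert structure with at most two singular fibers, one of which is forced to live along the $\widetilde{V}$ core. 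Hence at most one of the exterior pieces can be glued in a non-fiber-preserving way, and that single piece is the one producing the exceptional edge.

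The heart of the argument is then the explicit computation of the matrix on that one exceptional edge. I would reconcile two descriptions of the gluing across $\partial W=\partial^{out}$ of the $V5$ piece: on the $N_0$ side the coordinate $(l,m)$ has $l$ isotopic to the regular fiber and, by case 5 of Proposition \ref{filnghd}, the invariant manifold $W^s(\gamma)$ (resp.\ $W^u(\gamma)$) meets a meridian twice, reflecting that the core is a $(2,1)$-cable. Pushing the standard fiber/meridian basis of the ambient solid torus across this cabling and recording the $\mathbb{Z}$ freedom in the choice of meridian noted in Remark \ref{freecoor}(2) produces exactly a matrix of the form $\pm\left(\begin{smallmatrix}1&-2\\1-t&2t-1\end{smallmatrix}\right)$ when the exceptional edge starts at an $R$ vertex, and its inverse-transpose analogue $\pm\left(\begin{smallmatrix}2t-1&2\\t-1&1\end{smallmatrix}\right)$ when it terminates at an $A$ vertex, for some $t\in\mathbb{Z}$; the sign ambiguity is the usual $PSL(2,\mathbb{Z})$ ambiguity, and the parameter $t$ records the residual longitude choice on the attached $V_i$. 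This is the step I expect to be the main obstacle: correctly tracking the factor of $2$ from the $(2,1)$-cabling through the coordinate conventions of Section \ref{coordinate} so that the resulting matrix lands in precisely the normal form of (b) in the second case of Definition \ref{neatsimpleWLG}.

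Finally, having arranged that the unique exceptional edge carries such a matrix, that every other saddle-to-$A$/$R$ edge carries $\pm\left(\begin{smallmatrix}1&0\\k&1\end{smallmatrix}\right)$ by fiber-preservation, and that saddle-to-saddle edges carry $\pm I$, I would simply read off that the resulting WLG $G$ satisfies every clause of the second case of Definition \ref{neatsimpleWLG}, hence is a neat simple WLG with a $V5$ vertex realising $\phi_t$. I would close by noting that the construction is purely a change of coordinates on the already-given simple WLG, so no new vertices or edges are introduced and $G$ remains a WLG of the same flow $\phi_t$.
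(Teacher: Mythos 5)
You follow the paper's route step for step: Lemma \ref{notwoV5} to get a unique $V5$ vertex, Lemma \ref{matrixN0} to normalise the saddle-to-saddle matrices to $\pm I$, Corollary \ref{topN0} to identify the saddle simple piece $N_0\cong\Sigma(0,n;\frac{1}{2})$, Lemma \ref{top} to bound the number of singular fibers of a Seifert structure on $S^3$, and finally the exceptional-edge matrix forced by capping off to $S^3$. But there is one genuine gap. Before you may assert that $N_0\cup V_1\cup\dots\cup V_n$ endows $S^3$ with a Seifert structure (and hence invoke Lemma \ref{top} at all), you must exclude the possibility that some solid torus $V_i$ is glued so that its \emph{meridian} is identified with a \emph{regular fiber} of $N_0$: in that case the fibration does not extend over $V_i$, and no Seifert structure on $S^3$ is induced. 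This case is not vacuous --- it is precisely the first alternative in the proof of Lemma \ref{simpleWLG1}, where it genuinely occurs and produces the connected-sum-of-solid-tori configuration with the two exceptional edges of the \emph{first} kind of neat simple WLG. Your sentence ``at most one of the exterior pieces can be glued in a non-fiber-preserving way'' conflates the harmless situation (fibration extends with a new singular fiber along the core of $V_i$) with this fatal one (fibration does not extend), so as written the Seifert-structure step begs the question. The paper closes this case with an orientability obstruction specific to the twisted setting: if a meridian of $V_i$ were glued to a regular fiber, then a vertical M\"obius band around the multiplicity-$2$ singular fiber of $N_0$ (whose boundary is a regular fiber) capped off by the meridian disk of $V_i$ yields an embedded $\mathbb{R}P^2$ in $V_i\cup N_0\subset S^3$, which is impossible. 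Your proof needs this (or an equivalent) argument inserted before the Seifert analysis.

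Modulo that omission, the remainder matches the paper, including the part you flag as the main obstacle: the paper pins down the exceptional matrix not by pushing the fiber/meridian basis across the $(2,1)$-cabling directly, but by first showing $W=N_0\cup V_2\cup\dots\cup V_n\cong\Sigma(0,1;\frac{1}{2})$ and then requiring $W\cup V_1\cong S^3$, which after a suitable choice of coordinate on $V_1$ forces $B=\pm\left(\begin{smallmatrix}2t-1&2\\ t-1&1\end{smallmatrix}\right)$ when $V_1$ is an $A$ vertex and $B=\pm\left(\begin{smallmatrix}1&-2\\ 1-t&2t-1\end{smallmatrix}\right)$ when it is an $R$ vertex, exactly clause (b) of the second case of Definition \ref{neatsimpleWLG}. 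Your cabling computation, if carried out with the conventions of Section \ref{coordinate} and Remark \ref{freecoor}, amounts to the same bookkeeping, so that step is a sketch rather than an error.
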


\begin{proof}
 First of all, by Lemma \ref{notwoV5}, there is exactly one twisted saddle periodic orbit in $\phi_t$.

Similar to case $1$, by Lemma \ref{matrixN0}, we can construct a simple WLG $G_1$ of $\phi_t$ which satisfies (a) of $1$ in
Definition \ref{neatsimpleWLG}. Moreover, for a saddle simple piece $N_0$, $N_0$ can be decomposed to several filtrating neighborhoods with coordinates
associated to $G_1$. We fix the coordinates of the filtrating neighborhoods in $N_0$ which are associated to $G_1$. Assume that
 $V_1\sqcup \dots \sqcup V_n= \overline{S^3 - N_0}$ where each $V_i$ is a tubular neighborhood of an attractor or a repeller.

If a meridian of some $V_i$ is glued to a regular fiber of $N_0$. Then one can find an embedded $\mathbb{R}P^2$ in $V_i \cup N_0$. But
 $\mathbb{R}P^2$ can't be embedded into $S^3$.

 Otherwise, $N_0 \cup V_1\cup \dots \cup V_n$ endows $S^3$ a Seifert structure in our discussion.
 By  Lemma \ref{top} and the fact $N_0$ is   homeomorphic to $\Sigma(0,n; \frac{1}{2})$ ($n\geq 2$) (Corollary \ref{topN0}),
 the Seifert structure of $S^3$ provided by
  $N_0 \cup V_1\cup \dots \cup V_n$ is $\Sigma(0,0; \frac{1}{2}, \frac{\beta}{\alpha})$. Then one of $V_1, \dots, V_n$, for instance $V_1$,
   provides the other singular fiber of the Seifert structure. Naturally, the exceptional edge is associated to $V_1$.
   By changing longitudes of $V_2,\dots, V_n$ suitably, the gluing map from $V_i$ ($i\geq 3$) to $N_0$ preserving fibers and the gluing matrices
   satisfy (a) of 2 in Definition \ref{neatsimpleWLG}. By the way, it is easy to show that $W= N_0 \cup V_2 \cup \dots \cup V_n$
    is homeomorphic to $\Sigma(0,1; \frac{1}{2})$.

    If $V_1$ is a $A$ vertex,  to ensure that $W\cup V_1$ is homeomorphic to $S^3$,
    after choosing a suitable coordinate for $V_1$,
    the gluing matrix $B$ of the exceptional edge is either $\left(
                                                                                     \begin{array}{cc}
                                                                                       2t-1 & 2 \\
                                                                                       t-1 & 1 \\
                                                                                     \end{array}
                                                                                   \right)$
   or $- \left(
                                                                                     \begin{array}{cc}
                                                                                       2t-1 & 2 \\
                                                                                       t-1 & 1 \\
                                                                                     \end{array}
                                                                                   \right)$ for some $t\in \mathbb{Z}$.
   If  $V_1$ is an $R$ vertex, similar to the case when $V_1$ is a $A$ vertex,  to ensure that $W\cup V_1$ is homeomorphic to $S^3$,
    after choosing a suitable coordinate for $V_1$,
    the gluing matrix $B$ of the exceptional edge is either $\left(
                                                                                     \begin{array}{cc}
                                                                                      1 & -2 \\
                                                                                       1-t & 2t-1 \\
                                                                                     \end{array}
                                                                                   \right)$
   or $- \left(
                                                                                     \begin{array}{cc}
                                                                                     1 & -2 \\
                                                                                       1-t & 2t-1 \\
                                                                                     \end{array}
                                                                                   \right)$ for some $t\in \mathbb{Z}$.
     Therefore, the gluing matrix  from $V_1$  to $W$ associated to the coordinates as above
   satisfies (b) of 2 in Definition \ref{neatsimpleWLG}.

   In summary, the WLG associated to the coordinates as above is a neat simple WLG  with $V5$ vertex (Definition  \ref{neatsimpleWLG}).
\end{proof}

\begin{lemma} \label{simpleWLG4}
If $G$ is a neat simple WLG with a $V5$ vertex, then there exists a depth $0$ NMS flow $\phi_t$ on $S^3$ with WLG $G$.
\end{lemma}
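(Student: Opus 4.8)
The plan is to mirror the construction in the proof of Lemma \ref{simpleWLG2}, replacing the product saddle piece $F_n \times S^1$ by the genuinely twisted Seifert piece forced by the $V5$ vertex. By the second case of Definition \ref{neatsimpleWLG} together with Corollary \ref{topN0}, the saddle simple piece $N_0$ associated to $G$ must be homeomorphic to $\Sigma(0,n;\frac{1}{2})$ with $n\geq 2$, its unique singular fiber being the twisted saddle periodic orbit coming from the $V5$ vertex, and all remaining periodic orbits and the longitudes on $\partial N_0$ being regular fibers. So first I would build $N_0$ concretely: take the $V5$ model $\widetilde{V}=D^2\times S^1 - W$ of Proposition \ref{filnghd}(5) as the local piece around the twisted orbit, and attach the $V4$ pieces $F\times S^1$ to it along tori, using the saddle-to-saddle matrices $\begin{pmatrix} 1 & 0 \\ 0 & 1 \end{pmatrix}$ or $\begin{pmatrix} -1 & 0 \\ 0 & -1 \end{pmatrix}$ permitted by the definition. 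Since all these gluings preserve fibers, the result is a Seifert piece with exactly one exceptional fiber of type $(2,1)$, i.e. $\Sigma(0,n;\frac{1}{2})$, carrying a depth $0$ NMS flow with no heteroclinic saddle connection and with standard coordinates on $\partial N_0$ (every longitude a regular fiber, every meridian bounding a horizontal surface).

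Next I would glue the $n-1$ attractor/repeller filtrating neighborhoods $V_2,\dots,V_n$ to $N_0$ along the non-exceptional boundary tori, using the matrices of type (a) of case $2$ in Definition \ref{neatsimpleWLG}, which send each meridian to a curve meeting a fiber once while preserving the fibration. By the same Seifert-fibered bookkeeping as in Lemma \ref{simpleWLG2} (now retaining the single $\frac{1}{2}$ exceptional fiber), the resulting manifold $W=N_0\cup V_2\cup\cdots\cup V_n$ is homeomorphic to $\Sigma(0,1;\frac{1}{2})$, which is a fibered solid torus carrying one exceptional fiber; this is exactly the intermediate identification already recorded in the proof of Lemma \ref{simpleWLG3}.

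Finally I would glue the last filtrating neighborhood $V_1$, associated to the exceptional edge, along $\partial W$ using the matrix $B$ of type (b) of case $2$. The claim to check is that $W\cup V_1\cong S^3$ and that the flow $\phi_t$ so produced is depth $0$ NMS with WLG equal to $G$. For the homeomorphism type I would invoke Lemma \ref{top}(2): gluing a solid torus to the Seifert solid torus $\Sigma(0,1;\frac{1}{2})$ along its boundary produces a closed Seifert manifold $M(0,0;\frac{1}{2},\beta/\alpha)$, and the matrix families $\pm\begin{pmatrix} 1 & -2 \\ 1-t & 2t-1 \end{pmatrix}$ (when $V_1$ is a repeller) and $\pm\begin{pmatrix} 2t-1 & 2 \\ t-1 & 1 \end{pmatrix}$ (when $V_1$ is an attractor) are engineered so that the second Seifert invariant satisfies the $S^3$ condition of Lemma \ref{top}(2); this is simply the reverse of the matrix computation carried out in the proof of Lemma \ref{simpleWLG3}, so one reads it off directly. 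Once $W\cup V_1\cong S^3$ is confirmed, transversality of the flow to all cutting tori together with the absence of any new saddle--saddle connection (all saddles live in $N_0$, where the flow was built to be depth $0$) shows that $\phi_t$ is a depth $0$ NMS flow, and by construction its Lyapunov graph with the chosen coordinates is precisely $G$.

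The hard part will be the last step: verifying that the prescribed matrix forms in (b) of case $2$ genuinely realize $S^3$ and no other lens space, i.e. matching the parametrization by $t\in\mathbb{Z}$ (and the $\pm$ sign, which tracks the left/right-hand orientation encoded by $\mathrm{sign}(V,V_0)$ in Definition \ref{signvet}) against the Euler-number condition for a Seifert structure on $S^3$. Everything else is a fiber-preserving assembly that parallels Lemma \ref{simpleWLG2}; the genuinely new content is confirming that the single exceptional $(2,1)$ fiber of the $V5$ block, combined with the second exceptional fiber contributed by the exceptional edge, yields the standard two-singular-fiber Seifert structure $M(0,0;\frac{1}{2},\beta/\alpha)$ on $S^3$ rather than on some lens space, which is exactly what the matrix bookkeeping and Lemma \ref{top}(2) are arranged to guarantee.
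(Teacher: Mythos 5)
Your proposal is correct and follows essentially the same route as the paper's proof: build the saddle simple piece $N_0 \cong \Sigma(0,n;\frac{1}{2})$ from the saddle-to-saddle matrices, glue the non-exceptional attractor/repeller neighborhoods via the type (a) matrices of case $2$ in Definition \ref{neatsimpleWLG} to obtain a solid torus $\Sigma(0,1;\frac{1}{2})$, then attach the exceptional vertex via the type (b) matrices and confirm $S^3$ through the Seifert-fibered bookkeeping of Lemma \ref{top}. Your write-up is in fact more explicit than the paper's (which leaves the final $S^3$ verification as a routine check, the matrix forms having already been derived in Lemma \ref{simpleWLG3}), but the underlying argument is identical.
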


\begin{proof}
Under the assumption of the  matrices between saddle vertices, one can easily build a saddle simple piece $N_0$ associated to $G$  such that $N_0$
   satisfies the following conditions.
 \begin{itemize}
   \item $N_0$ is homeomorphic to $\Sigma(0,n; \frac{1}{2})$ ($n\geq 2$).
   \item There doesn't exist a heteroclinic trajectory connecting saddle orbits in $N_0$.
   \item Each longitude in $\partial N_0$  is parallel to a fiber of $N_0$.
 \end{itemize}

Then one can glue $N_0$ to  $n-1$ corresponding filtrating neighborhoods of attractors and repellers $V_2, \dots, V_n$ to a new compact 3-manifold
 $N_1$ with flow such that the gluing matrices between
$V_2 \cup \dots \cup V_n$ and $N_0$ satisfy (a) of $2$ in Definition \ref{neatsimpleWLG}.
Notice the position of the coordinates restricted to $\partial N_0$ and the matrices shown in (a) of $2$ in Definition \ref{neatsimpleWLG},
 $N_1$ is homeomorphic to a solid torus.

In the end, we glue $N_1$ to  a   filtrating neighborhood associated to the exceptional attractor or
repeller vertex such that the gluing matrices satisfy (b) of $2$ in Definition \ref{neatsimpleWLG}.
Then we obtain a closed three manifold $M$ with flow $\phi_t$.
Under these gluing conditions, one can easily check that $M$ is homeomorphic to $S^3$ and $\phi_t$ is a depth $0$ NMS flow with WLG $G$.
\end{proof}

\begin{theorem}\label{generalWLG}
A neat WLG $G$ is associated to a depth $0$ NMS flow on $S^3$ if and only if they satisfy the following conditions:
\begin{enumerate}
  \item every simple WLG factor admits at most one special vertex;
  \item if there exists a special vertex in a simple WLG factor $G_0$ of $G$, then there doesn't exist a $V5$ vertex in $G_0$.
\end{enumerate}
Conversely, a depth $0$ NMS flow $\phi_t$  on $S^3$ always admits a neat WLG $G$ satisfying the above two conditions.
\end{theorem}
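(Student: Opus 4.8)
The plan is to bootstrap the general statement from the simple case, Theorem \ref{simpleWLG}, using the simple WLG decomposition of Proposition \ref{SWLGD} together with the fact that every graph split is the combinatorial shadow of an \emph{invertible} flow split. Write the simple WLG decomposition of $G$ as $\{G_1,\dots,G_n\}$; by Proposition \ref{SWLGD} each $G_i$ is a simple WLG, and by Definition \ref{neatWLG} each is in fact a neat simple WLG. I would prove the three assertions in tandem. For the ``if'' direction I realize each $G_i$ by a depth $0$ NMS flow $\phi^i_t$ on $S^3$ (Theorem \ref{simpleWLG}) and then reverse the graph splits one at a time, carrying out the inverse flow split at each step, checking that the end product is a single depth $0$ NMS flow $\phi_t$ on $S^3$ with WLG $G$. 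For the ``only if'' direction I start from $\phi_t$, pass to a WLG, take its simple WLG decomposition and the associated flow splitting; each factor is then a depth $0$ NMS flow on $S^3$ with a simple WLG, so Theorem \ref{simpleWLG} simultaneously shows that $G$ is neat and lets me read off (1) and (2) from the local structure at the special vertices. The final ``conversely'' clause is then automatic: an arbitrary WLG of $\phi_t$ is made neat factor by factor, and (1),(2) hold by the ``only if'' analysis.

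The crux is to identify what a special vertex means geometrically. Inspecting the three flow splits, a special vertex is created exactly when a type $2$ or type $3$ filtrating neighborhood is capped by a standard attractor/repeller solid torus \emph{along a torus on which the capping meridian is glued to a regular fiber} of the saddle simple piece --- in other words a special vertex is a connect-sum cap, the place where a neck can later be reinserted. (The ``through'' reconnections produced by flow split I, and by the $G_1$-side of flow split II, reattach two boundary tori directly and create no such cap.) With this reading the two conditions become the two obstructions already isolated in the proofs of Lemma \ref{simpleWLG1} and Lemma \ref{simpleWLG3}. By Corollary \ref{topN0} the saddle simple piece of a factor is $F_n\times S^1$ or $\Sigma(0,n;\tfrac{1}{2})$; as in Lemma \ref{simpleWLG1}, once one cap has its meridian glued to a fiber, the union $N_0\cup V_1$ is a connected sum of solid tori and the requirement that capping it off yield the irreducible manifold $S^3$ forces every other cap to be fiber-preserving. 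Hence at most one cap per factor can be a connect-sum cap: this is condition (1). In the twisted case, as in Lemma \ref{simpleWLG3}, a cap whose meridian is glued to a fiber of $\Sigma(0,n;\tfrac{1}{2})$ produces an embedded $\mathbb{R}P^2$, which cannot sit in $S^3$; hence a factor containing a $V5$ vertex admits no connect-sum cap at all: this is condition (2). (Lemma \ref{notwoV5} and the singular-fibre count of Lemma \ref{top} are the same obstructions used globally.)

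For the realization (``if'') direction, with (1) and (2) in hand the reverse gluings can actually be performed. Reversing a flow split I reglues two factors along a torus into a type $1$ neighborhood, the new gluing matrix being the product $B_{e_1^i}B_{e_2^i}$ exactly as prescribed by graph split I, and similarly for the through-side of split II. Reversing a split at a special vertex removes the connect-sum cap of one factor (respectively pairs the special $A$-cap of one factor with the special $R$-cap of its partner, for split III) and reinserts the corresponding $V_2$ or $V_3$ neck; because the removed cap filled a fibre with a meridian disk, this regluing is a connect sum of the two $S^3$-flows and keeps the ambient manifold equal to $S^3=S^3\#S^3$. Condition (1) guarantees that each factor offers a single well-defined reopenable cap for each neck it meets on its capped side, and condition (2) guarantees that a $V5$ factor is never asked to reopen a cap it cannot carry. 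Running over all the (finitely many) splits, I obtain a depth $0$ NMS flow on $S^3$; the no-new-saddle-connection observation used in the split surgeries shows it stays depth $0$.

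The step I expect to be the main obstacle is verifying that the reassembled object is \emph{exactly} $G$ and lives on $S^3$, rather than merely some depth $0$ NMS flow on some graph manifold: one must check both that the connect-sum tree of factors closes up to the prime manifold $S^3$ and that the gluing matrices recombine correctly under the product rule $B_e=B_{e_1}B_{e_2}$ of the graph splits. The cleanest way to control this is to carry the Seifert structure and its singular-fibre count as an invariant through every reversal, invoking Lemma \ref{top} (at most two singular fibres on $S^3$) as the master constraint for the product pieces and the $\mathbb{R}P^2$/Klein-bottle obstruction of Lemma \ref{notwoV5} for the twisted piece; once the manifold is pinned to $S^3$, matching the matrices against Definition \ref{neatsimpleWLG} is a routine if lengthy bookkeeping.
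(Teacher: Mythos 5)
Your proposal is correct and follows essentially the same route as the paper: decompose $G$ into neat simple factors via Proposition \ref{SWLGD}, realize each by Theorem \ref{simpleWLG} and reverse the flow splits for sufficiency, identify special vertices with caps whose meridian is glued to a regular fiber, and derive the two conditions from the non-separating $S^2$ (equivalently the $S^2\times S^1$-summand/irreducibility obstruction you phrase via Lemma \ref{top}) and the embedded $\mathbb{R}P^2$ obstruction, with the ``conversely'' clause handled by transporting neat coordinates from the factors back to $\phi_t$. Your only deviations are cosmetic --- phrasing condition (1) through prime-decomposition/Seifert bookkeeping rather than the paper's non-separating sphere, and glossing as ``automatic'' the coordinate-transport step (notably for solid-torus summands of filtrating neighborhoods) that the paper writes out explicitly.
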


\begin{proof}
(\emph{the necessity of the first part}) Let $\phi_t$ be a depth $0$ NMS flow with a neat WLG $G$.
 Assume that a simple WLG  factor $G_1$ of $G$ admits two special vertices. Let $N_0$ be a saddle simple piece of $\phi_t$ associated to $G_1$.
  By Corollary \ref{topN0}, $N_0$  is
       either homeomorphic to $F_n \times S^1$ ($n\geq 3$) or $\Sigma (0,n; \frac{1}{2})$ ($n\geq 2$). The fact that $G_1$ admits two special
       vertices means that there are two connected components of $\partial N_0$ are glued to two solid tori in such a way: two regular
        fibers (in these two connected
       components respectively) are glued to two meridians of the two solid tori respectively. It is easy to observe that there exists a non-separating $S^2$ inside.
       This conflicts with the fact that $S^3$ doesn't contain a non-separating $S^2$.  Therefore, $G$ satisfies the first condition of the theorem.

      If  $G$ doesn't satisfy the second condition of the Theorem, then there exists a simple WLG factor $G_1$ of $G$
      which admits a special  vertex and a $V5$ vertex. Let $N_0$ be a saddle simple piece of $\phi_t$ associated to $G_1$.
     By Corollary \ref{topN0} and the fact that $G_1$ admits a $V5$ vertex, $N_0$
      is homeomorphic to  $\Sigma (0,n; \frac{1}{2})$ ($n\geq 2$).  The fact that $G_1$ admits a special vertex means that there is a connected
      component of $\partial N_0$ which is glued to a solid torus such that  there exists a regular fiber of $N_0$ which is glued to a meridian
       of the solid torus.
      Then one can  observe that there exists an embedded surface in $S^3$ which is homeomorphic to $\mathbb{R}P^2$. This is obviously impossible.
       Therefore, $G$ satisfies the second condition of the theorem.

(\emph{the sufficiency of the first part})
Let $G$ be a neat WLG which satisfies the two conditions in the theorem.
Suppose $\{G_1, \dots, G_n\}$ is the simple WLG decomposition of $G$.  For every $G_i$ ($i\in \{1,\dots, n\}$),
by Theorem \ref{simpleWLG}, there exists a depth $0$ NMS flow $\phi_t^i$  associated to $G_i$.
Following the relationship between  $G$ and $\{G_1, \dots, G_n\}$, one can naturally build NMS flow $\phi_t$ on some
 closed orientable 3-manifold $M$ with WLG $G$ by doing the converse of flow splits among
$\phi_t^1, \dots, \phi_t^n$. The two conditions in the theorem ensure that
$M$ is homeomorphic to $S^3$. Moreover, if we choose the gluing maps carefully (one can come back to Section \ref{split} and Proposition
\ref{filnghd} to check the detail), there doesn't exist a heteroclinic trajectory connecting two
 saddle orbits in $\phi_t$. Therefore, we have constructed a depth $0$ NMS flow $\phi_t$ with WLG $G$.

(\emph{proof of the second part})
  Let $G'$ be a WLG of $\phi_t$ with the simple WLG decomposition $\{G_1', \dots, G_n'\}$. The NMS flows set $\{\phi_t^1,\dots,\phi_t^n\}$
   is the flow splitting of $\phi_t$ associated to $G$. Here $G_i'$ ($i\in \{1,\dots, n\}$) is a WLG of $\phi_t^i$. By Theorem \ref{simpleWLG},
   there exist some coordinates for the filtrating neighborhoods of $\phi_t^i$ such that the WLG associated to the coordinates is a neat simple
   WLG $G_i$. Now we provide a new coordinate to  every connected component of every filtrating neighborhood of $\phi_t$ associated to $G'$.

   Suppose $W$ is a solid torus component  in the prime decomposition of a filtrating
   neighborhood $N$ in $\phi_t$. By Section \ref{coordinate} and Section \ref{split},
   we can suppose that $W$ is  associated to a special vertex $V_1$ in $G_1'$ such that the filtrating neighborhood $N_1$ labeled by $V_1$
   comes from $W$. Naturally, the maximal invariant set of
   $N_1$ is either an attractor or a repeller.   Then the coordinate in $\partial N_1$ naturally provides a coordinate
   of $\partial W \subset \partial N$.

   For another boundary component $\Sigma$ of some filtrating neighborhood of $\phi_t$ associated to $G'$, $\Sigma$ is  a boundary component
   of some filtrating neighborhood of $\phi_t^i$ associated to $G_i$ for some $i\in \{1,\dots, n\}$.

   Now one can easily  check that the WLG $G$  provided by  the new coordinates is a neat WLG.
\end{proof}

\section{Indexed links of Depth $0$ NMS flows on $S^3$}\label{link}
The periodic orbits of an NMS flow on $S^3$ form a \emph{indexed link}, that is, an oriented link with the index $0,1$ or $2$ attached to each
component.  The orientations of the indexed link are naturally endowed by the orientation of the flow. The index of a component of a periodic orbit $\gamma$ is defined as the dimension of the strong stable manifold of $\gamma$. Wada \cite{Wa}  characterized the set of indexed links which arise as the closed orbits of an NMS flow on $S^{3}$ in terms of a generator and six operations. In this section, based on the work in the last section, we will give some direct descriptions about indexed links of Depth $0$ NMS flows on $S^3$.

The following proposition implies that WLG decides the indexed link of NMS flows. The proposition  is a direct consequence of the definition of WLG (Section \ref{coordinate}). Therefore, we omit its proof here.

\begin{proposition} \label{decidelink}
Let $G$ be a WLG which can be associated to an NMS flow on $S^3$. Suppose $\phi_t^1$ and $\phi_t^2$ are two NMS flows on $S^3$, then  the indexed links of $\phi_t^1$ and $\phi_t^2$ are the same.  The indexed link of $\phi_t^1$ is called the indexed link of $G$.
\end{proposition}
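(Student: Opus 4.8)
The plan is to produce, directly from the combinatorial and matrix data of the common WLG $G$, a homeomorphism of $S^3$ carrying the periodic-orbit link $L_1$ of $\phi_t^1$ onto the periodic-orbit link $L_2$ of $\phi_t^2$ while preserving the orientation and index of each component; this is precisely the assertion that the two indexed links coincide. First I would fix Lyapunov functions $g_1,g_2$ realizing $G$ for the two flows and cut each sphere along the chosen regular level tori $T_e$, one for every edge $e$ of $G$. This splits $S^3$ into filtrating neighborhoods $N_v^1,N_v^2$ indexed by the vertices $v$ of $G$, glued along tori indexed by the edges. Since both flows are associated to the \emph{same} $G$, each vertex $v$ carries the same label in the two decompositions, so by Proposition \ref{filnghd} the pieces $N_v^1$ and $N_v^2$ are topologically equivalent through a homeomorphism $h_v\colon N_v^1\to N_v^2$ that carries the maximal invariant periodic orbit $\gamma_v^1$ to $\gamma_v^2$ and respects the flow direction; because the index of a component is determined by whether its orbit is an attractor, a saddle, or a repeller---information recorded by the vertex label---$h_v$ automatically preserves the index.

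Next I would arrange that the pieces match along their boundaries. On each boundary torus the longitude is pinned to an invariant manifold (or to the Seifert fibre) and the meridian is fixed by the intersection-number and right-hand-orientation conventions of Section \ref{coordinate}, so up to the harmless ambiguities listed in Remark \ref{freecoor} the oriented pair $(l,m)$ is canonical; I can therefore take each $h_v$ to send the coordinate pair of $N_v^1$ to that of $N_v^2$. For an edge $e$ with initial vertex $v_0$ and terminal vertex $v_1$, the two flows identify the relevant boundary tori by gluing maps inducing the \emph{same} matrix $B_e\in PSL(2,\mathbb{Z})$ on first homology. Consequently, along $T_e$ the two candidate descriptions of the global map, namely $h_{v_1}$ precomposed with the flow-$1$ gluing and the flow-$2$ gluing postcomposed with $h_{v_0}$, induce the same map on $H_1$, and two homeomorphisms of a torus inducing the same map on $H_1$ are isotopic.

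The one genuine technical step is the assembly. On each gluing torus the two boundary restrictions agree only up to isotopy, so I would invoke the isotopy extension theorem to modify one of the $h_v$ inside a collar of the corresponding boundary component, chosen disjoint from the orbit $\gamma_v$, so that the identifications match exactly along every $T_e$ simultaneously. Since $G$ is a finite graph and the collars can be taken pairwise disjoint and interior, these modifications do not interfere, and the $h_v$ glue to a global homeomorphism $h\colon S^3\to S^3$ with $h(L_1)=L_2$. A final check, using that the flow coorients each $T_e$ and that the right-hand conventions force every $h_v$ to be orientation-preserving, shows that $h$ is orientation-preserving, so $L_1$ and $L_2$ agree as indexed links. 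I expect the main obstacle to be exactly this bookkeeping: confirming that the collar isotopies at the several edges are mutually consistent, that they never disturb the already-positioned orbits $\gamma_v$, and that the coordinate freedoms of Remark \ref{freecoor} alter only the recorded gluing matrices and never the ambient link type.
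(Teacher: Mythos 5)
Your proof is correct and is essentially the argument the paper intends: the paper omits a proof entirely, remarking only that the proposition ``is a direct consequence of the definition of WLG (Section \ref{coordinate})'', and your cut-along-the-level-tori, match-the-pinned-coordinates, isotopy-extension assembly is exactly that omitted routine verification. The one point to handle with care is that the gluing matrices live in $PSL(2,\mathbb{Z})$, so ``same matrix on $H_1$'' in your torus-isotopy step is only well defined up to sign and must be resolved using the orientation conventions that the flow imposes on $(l,m)$ --- a subtlety the paper glosses over as well.
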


Now let's state a proposition about some connections between split surgeries and Wada's operation for all NMS flows on $S^3$. The proof can be easily followed by a routine check about the definition of  flow splits and a similar argument to the proof of case (a), (b) and (c) of the main theorem of Wada \cite{Wa}.

\begin{proposition} \label{Wadaop}
Suppose $\phi_t$ is an NMS flow on $S^3$ with a WLG $G$. Moreover, suppose $\phi_t^1$ and $\phi_t^2$ are the flows obtained  by doing
flow split of $\phi_t$ associated to a $V_i$ ($i\in \{1,2,3\}$) vertex of $G$. Let $L_1,L_2$ and $L$ be the indexed links of $\phi_t^1, \phi_t^2$ and $\phi_t$ correspondingly. Then $L_1,L_2$ and $L$ have the following relations.
\begin{enumerate}
  \item If $i=1$, $L=L_1 \cdot K_{V_1} \cdot L_2$ where $K_{V_1}$ is a normal indexed $1$ trivial knot corresponding to $V_1$ and $L_1 \cdot K_{V_1} \cdot L_2$ is the split sum of $L_1, L_2$ and $K_{V_1}$. This is Wada's operation I.
  \item If $i=2$, $L=L_1 \cdot K_{V_2}\cdot(L_2 -K)$ where $K_{V_2}$ is a normal indexed $1$ trivial knot corresponding to $V_2$ and $K$ is an index $0$ or $2$ knot in $L_2$. This is Wada's operation II.
  \item If $i=3$, $L=(L_1-K_1) \cdot K_{V_3}\cdot(L_2 -K_2)$ where $K_{V_3}$ is a normal indexed $1$ trivial knot corresponding to $V_3$ and, $K_1$ and $K_2$ are  index $0$ and index $2$ knots in $L_1$ and $L_2$ (or $L_2$ and $L_1$) correspondingly. This is Wada's operation III.
\end{enumerate}
\end{proposition}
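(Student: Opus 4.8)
The plan is to argue by a direct case analysis over the three forms of flow split defined in Section \ref{split}, tracking the periodic orbits — which are exactly the components of the indexed link — through each surgery and then identifying the resulting relation with Wada's operations. Since by Proposition \ref{decidelink} the indexed link of each of $\phi_t$, $\phi_t^1$, $\phi_t^2$ depends only on its WLG (and not on the choice of flow, which is not unique up to equivalence for splits $I$ and $II$), the links $L$, $L_1$, $L_2$ are well-defined and the asserted relations are meaningful. The common thread is that in all three cases $N$ arises from the non-twisted model $V$ in the proof of Proposition \ref{filnghd}, so the saddle orbit $\gamma$ labelled by $V_i$ is a \emph{normal} index-$1$ trivial knot; what differs is how the complementary pieces are reassembled and, crucially, whether the surgery introduces new index-$0$ or index-$2$ orbits from capping off.

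First I would treat $i=1$ (Flow split $I$). Here $N$ is the type-$1$ neighborhood $(T_1\times[0,1])\sharp(T_2\times[0,1])$, and by case $1$ of Proposition \ref{filnghd} all four separatrices of $\gamma$ are inessential in the boundary tori. Hence $\gamma$ lies in a ball meeting no other orbit and is unknotted, while the connected-sum sphere inside $N$ separates $S^3$ into two balls carrying the two sides of the link. Regluing $N_1^0$ to $N_1^1$ and $N_2^0$ to $N_2^1$ short-circuits the saddle and produces two copies of $S^3$ carrying $L_1$ and $L_2$; reversing the surgery therefore places $L_1$, the trivial knot $K_{V_1}$, and $L_2$ in disjoint balls. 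This gives the split sum $L=L_1\cdot K_{V_1}\cdot L_2$, which is Wada's operation $I$.

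For $i=2$ (Flow split $II$) and $i=3$ (Flow split $III$) the argument has the same shape, the new feature being asymmetric capping. In case $2$ of Proposition \ref{filnghd} one stable separatrix of $\gamma$ is a meridian of the $D^2\times S^1$ summand, so to form $\phi_t^2$ one caps the corresponding complementary piece with a standard attractor- (or repeller-) solid torus. The core of that solid torus is a new index-$0$ (or index-$2$) orbit $K$ that is not an orbit of $\phi_t$; thus on the link level $L_2$ carries the spurious component $K$, and undoing the split replaces it by the saddle $K_{V_2}$ linked as prescribed, yielding $L=L_1\cdot K_{V_2}\cdot(L_2-K)$. The non-symmetry of $\phi_t^1$ and $\phi_t^2$ recorded in Section \ref{split} is exactly the statement that only one side acquires such a $K$. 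In case $3$ both the entering and the exiting side carry a meridian separatrix, so both $\phi_t^1$ and $\phi_t^2$ acquire capping cores, an index-$0$ knot $K_1$ and an index-$2$ knot $K_2$; reversing the surgery gives $L=(L_1-K_1)\cdot K_{V_3}\cdot(L_2-K_2)$, which is Wada's operation $III$.

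The one point deserving real care — and the main obstacle — is verifying that the combinatorics I read off from the filtrating neighborhoods genuinely coincides with Wada's operations rather than merely resembling them. Concretely I must check that $K_{V_i}$ is spliced in with the correct normal framing and linking relative to $L_1$ and $L_2$, and that the deleted orbit $K$ (resp. $K_1,K_2$) is precisely the index-$0$/index-$2$ component along which Wada performs his modification. This is where the promised ``similar argument to the proof of cases (a), (b), (c) of the main theorem of Wada \cite{Wa}'' does the work: one transports his local model of each operation onto the flow picture of the corresponding split and observes that the two agree. Everything else reduces to a routine unwinding of the definitions of Flow split $I$, $II$, $III$ in Section \ref{split} together with the separatrix positions recorded in Proposition \ref{filnghd}.
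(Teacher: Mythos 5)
Your proposal is correct and follows essentially the same route as the paper, which omits a detailed proof and states exactly this recipe: a routine unwinding of the definitions of flow splits $I$, $II$, $III$ against the separatrix data of Proposition \ref{filnghd}, combined with the argument of cases (a), (b), (c) of Wada's main theorem for the final identification with his operations. Your additional observations --- well-definedness of $L_1, L_2$ via Proposition \ref{decidelink} despite the non-uniqueness of the split flows, and the tracking of the spurious capping cores $K$, $K_1$, $K_2$ --- are faithful expansions of the paper's sketch, with the one genuinely nontrivial step deferred to Wada exactly as the paper does.
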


In Section \ref{WLGD}, we use neat WLG to represent depth $0$ NMS flows on $S^3$ very well. Combine this with Proposition \ref{decidelink},
naturally we have the following question.
 \begin{question}\label{Qindlink}
 For a given neat WLG $G$ which satisfies the two conditions of Theorem \ref{generalWLG}, what is the indexed link of $G$?
 \end{question}
 Proposition \ref{Wadaop} tells us that for a neat WLG $G$, if we know the indexed link of every simple WLG factor of $G$, then we can easily get the indexed link of $G$ through a finite steps of Wada's operations I, II and III. Therefore, to answer Question \ref{Qindlink}, we only need to discuss the indexed link of a neat simple WLG. The following two theorems (Theorem \ref{noV5link} and Theorem \ref{V5link}) just deal with such a question.

\begin{theorem}\label{noV5link}
Let $G$ be a neat simple WLG without a $V5$ vertex and $V_0$ be  a vertex in $G$ defined in Definition \ref{signvet}. Suppose that the number of the
 vertices of $G$ is $n$ and $L$ is the indexed link of $G$. Then $L$ and $G$ satisfy the following conditions.
\begin{enumerate}
    \item $L$ is composed of a Hopf link and $n-2$ $(p,q)$ cable knots (to the knot associated to $V_0$) which are pairwise parallel. Here $(p,q)$ is the first line vector of $\overline{B}$ in Definition \ref{neatsimpleWLG}.
    \item Each one of the Hopf link $K_0 \cup K_1$ is either an attractor or a repeller. Moreover, $K_0$ and $K_1$ are correspondence to two vertices $V_0$ and $V_1$ in $G$ respectively.
    \item Moreover, $L$ satisfy the following orientation conditions.
        \begin{enumerate}
          \item If $Sign(V_1, V_0)=1$, then $K_0 \cup K_1$ is a positive Hopf link; otherwise,  $K_0 \cup K_1$ is a negative Hopf link.
          \item For another vertex $V\in G$, $V$ is correspondence to a $Sign(V, V_0) (p,q)$ cable knot of $K_0$.
        \end{enumerate}
  \end{enumerate}
\end{theorem}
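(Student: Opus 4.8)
The plan is to make the model of Corollary \ref{topN0} completely explicit for a single flow realizing $G$ and then read the link and its orientations directly off the gluing matrices. By Proposition \ref{decidelink} the indexed link $L$ depends only on $G$, so I may compute it for the standard depth $0$ flow built in the proof of Lemma \ref{simpleWLG2}. For that flow a saddle simple piece is $N_0\cong F_{\bar n}\times S^1$ (an $\bar n$-punctured sphere times $S^1$), every saddle orbit and every longitude of $\partial N_0$ being a regular fiber $f$ of the product fibration. If $s$ denotes the number of $V4$ vertices, gluing the pairs of pants gives $\bar n=s+2$, so $\overline{S^3-N_0}$ is a disjoint union of $s+2$ solid tori: the two exceptional vertices $V_0,V_1$ of Definition \ref{neatsimpleWLG}(c) and $s$ further attractor/repeller tori $U_1,\dots,U_s$. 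Hence $n=s+(s+2)=2s+2$ and $L$ has $n$ components. Moreover $W:=N_0\cup U_1\cup\dots\cup U_s\cong T^2\times[0,1]$ and $S^3=W\cup V_0\cup V_1$.

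Next I would establish the parallelism. Along a non-exceptional edge the gluing matrix is $\pm\left(\begin{smallmatrix}1&0\\k&1\end{smallmatrix}\right)$, which fixes the fiber class $l$; hence the core of each $U_j$ is isotopic in $U_j$ to the longitude $l$, i.e. to a regular fiber $f$. Together with the $s$ saddle orbits this exhibits exactly $n-2$ of the components as pairwise parallel copies of $f$. To identify $f$ as a knot I would pass to the Heegaard torus $T=\partial V_0$ of $S^3=V_0\cup(W\cup V_1)$, where the core $K_0$ of $V_0$ is a longitude of $V_0$ and $f$ lies on $T$. Writing $f$ in the meridian--longitude basis of $V_0$ and using the coordinate conventions of Section \ref{coordinate} together with $\overline A\,\overline B=\pm J$, $J=\left(\begin{smallmatrix}0&-1\\1&0\end{smallmatrix}\right)$, of Definition \ref{neatsimpleWLG}(c), a direct computation shows that the slope of $f$ on $T$ is the first row $(p,q)$ of $\overline B$; as $\overline B\in PSL(2,\mathbb Z)$ this vector is primitive, so $f$ is exactly the $(p,q)$-cable of $K_0$. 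This gives conclusion (1).

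Then I would treat the Hopf link and orientations. Since $W\cong T^2\times[0,1]$, $S^3=V_0\cup_T V_1$ is a genus-one Heegaard splitting, whose two core circles $K_0,K_1$ form a Hopf link; these are attached to $V_0,V_1$, giving (2). The flow orients every fiber, and the representative $\left(\begin{smallmatrix}1&0\\k&1\end{smallmatrix}\right)$ preserves the oriented class $l$ while its negative reverses it; transporting the orientation of $f$ along the unique path from $V_0'$ therefore flips it exactly at the edges of trace $-2$, so the orbit at a vertex $V$ is the fiber carrying the orientation $Sign(V,V_0)\,f$. This is the oriented cable statement (3)(b). Applying the same transport to $K_1$, and noting that the extra $+1$ built into Definition \ref{signvet}(1)(a) records precisely the sign of $\overline A\,\overline B=\pm J$, shows that $K_0\cup K_1$ is a positive Hopf link iff $Sign(V_1,V_0)=1$, which is (3)(a).

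The main obstacle is this last bookkeeping: reconciling the algebraic sign conventions (the orientation of $l$ under the chosen $PSL(2,\mathbb Z)$ representatives, the ambiguous sign in $\overline A\,\overline B=\pm J$, and the $+1$ adjustments hard-wired into Definition \ref{signvet}) with the geometric sign of the Hopf link and the orientation of each cable. I would dispose of it by fixing once and for all the oriented fiber $l$ inside the right-hand frame $(l,m,v)$ of Section \ref{coordinate}, checking the two base cases $\overline A\,\overline B=\pm J$ by hand, and then propagating multiplicatively along the tree, so that the general statement follows by induction on the length of the path used to define $Sign(\,\cdot\,,V_0)$.
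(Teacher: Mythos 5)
Your proposal is correct and follows essentially the same route as the paper: both identify the saddle simple piece as $F\times S^1$ with all orbits as regular fibers via Corollary \ref{topN0}, fill in the non-exceptional solid tori (whose gluing matrices $\pm\left(\begin{smallmatrix}1&0\\k&1\end{smallmatrix}\right)$ preserve the fiber class) to get $T^2\times[0,1]$, use condition (c) of Definition \ref{neatsimpleWLG} to obtain the genus-one splitting, the Hopf link $K_0\cup K_1$, and the $(p,q)$-cable slope from $\overline{B}$, and then verify the signs against Definition \ref{signvet}. Your additions --- the reduction via Proposition \ref{decidelink} to the model flow of Lemma \ref{simpleWLG2}, the explicit count $n=2s+2$, and the orientation transport flipping at trace $-2$ edges --- are faithful elaborations of steps the paper compresses into ``one can check'' and ``routinely check.''
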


\begin{proof}
By $1$ of Corollary \ref{topN0}, a saddle simple piece $N_0$ is homeomorphic to $F_n \times S^1$ with periodic orbits as some regular fibers. Here $F_n$ is a $n$ punctured 2-sphere ($n\geq 3$). Then one can check that
 $U_0\sqcup U_1 \dots \sqcup V_{n-1}= \overline{S^3 - N_0}$ where each $U_i$ is a tubular neighborhood of an attractor or a repeller. In particular,
 $U_0$ and $U_1$ are associated to $V_0$ and $V_1$ respectively. Assume that $U_2$ is the filtrating neighborhood associated to the vertex adjacent
 to $V_0$.

 Suppose $N=\overline{S^3 - U_0\cup U_1}$.
 By (b) of Definition \ref{neatsimpleWLG}, $N$ is homeomorphic to $T^2 \times [0,1]$ such that every periodic orbit in $N$ is isotopic to the longitudes
 in $\partial U_2$.  Furthermore, by (c) of Definition \ref{neatsimpleWLG}, we can obtain that $L$ satisfies $1$ and $2$ of the theorem.

 For the orientations of $L$, one can follow  Definition \ref{neatsimpleWLG} and Definition \ref{signvet} to routinely check that $L$ satisfy $3$ of the theorem. We omit the detail here.
\end{proof}

Similar to the proof of Theorem \ref{noV5link}, one can use $2$ of Corollary \ref{topN0}, Definition \ref{neatsimpleWLG} and Definition \ref{signvet}
to prove the following theorem. For simplify, we omit the detail here.

\begin{theorem}\label{V5link}
Let $G$ be a neat simple WLG with a $V5$ vertex and $V_0$ be  a vertex in $G$ defined in Definition \ref{signvet}. Suppose that the number of the
 vertices of $G$ is $n$ and $L$ is the indexed link of $G$. Then $L$ and $G$ satisfy the following conditions.
\begin{enumerate}
    \item $L$ is composed of a Hopf link and $n-2$ $(2t-1,2)$ cable knots (to the knot associated to $V_0$) which are pairwise parallel. Here $t$ is defined in $B$ of Definition \ref{neatsimpleWLG}.
    \item The Hopf link $K_0 \cup K_1$  are correspondence to two vertices $V_0$ and $V_1$ in $G$ respectively. Here $V_1$ is the unique $V5$ vertex in $G$.
    \item Moreover, $L$ satisfy the following orientation conditions.
        \begin{enumerate}
          \item If $Sign(V_1, V_0)=1$, then $K_0 \cup K_1$ is a positive Hopf link; otherwise,  $K_0 \cup K_1$ is a negative Hopf link.
          \item If   $V\in G -\{V_0,V_1\}$, $V$ is correspondence to a $Sign(V, V_0) (2t-1,2)$ cable knot of $K_0$.
        \end{enumerate}
  \end{enumerate}
\end{theorem}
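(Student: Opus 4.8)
The plan is to mirror the proof of Theorem~\ref{noV5link}, but using part~$2$ of Corollary~\ref{topN0} in place of part~$1$ and exploiting the Seifert geometry of $S^3$ carrying an order-two singular fibre. First I would apply part~$2$ of Corollary~\ref{topN0} to a saddle simple piece $N_0$ of $\phi_t$: this realises $N_0 \cong \Sigma(0,m;\frac{1}{2})$ with the unique twisted saddle orbit $K_1$ (the $V5$ vertex $V_1$) as the order-two singular fibre, and with every other periodic orbit together with every longitude of $\partial N_0$ as a regular fibre. As in the analysis of Lemma~\ref{simpleWLG3}, the complement $\overline{S^3-N_0}$ is a disjoint union of solid tori glued fibrewise along all but the exceptional edge; the solid torus $U_0$ attached along that edge, associated to the vertex $V_0$ of Definition~\ref{signvet}, supplies the second singular fibre, whose core I call $K_0$. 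Hence $N_0$ together with these solid tori equips $S^3$ with the Seifert structure $\Sigma(0,0;\frac{1}{2},\beta/\alpha)$ permitted by Lemma~\ref{top}(2), and its only two singular fibres are $K_0$ and $K_1$.

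Next I would identify $K_0\cup K_1$ as a Hopf link. By Lemma~\ref{simpleWLG3} the piece $W$ obtained by gluing $N_0$ to all the non-exceptional solid tori is a Seifert fibred solid torus $\Sigma(0,1;\frac{1}{2})$ whose singular fibre is $K_1$, so $K_1$ is isotopic to the core of $W$; and $U_0=\overline{S^3-W}$ is a solid torus with core $K_0$. Since deleting the core of a solid torus always yields $T^2\times[0,1]$, both $\overline{W-\nu(K_1)}$ and $\overline{U_0-\nu(K_0)}$ are copies of $T^2\times[0,1]$, and gluing them along their common boundary torus shows the link complement $\overline{S^3-\nu(K_0\cup K_1)}$ is again $T^2\times[0,1]$; thus $K_0\cup K_1$ is a Hopf link, which gives parts~$1$ and~$2$. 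The remaining $n-2$ periodic orbits are regular fibres of the global Seifert fibration, hence pairwise parallel; each winds twice around the order-two singular fibre $K_1$, so each is a $(2t-1,2)$ cable of $K_0$, where the strand number $2$ records the order of $K_1$ and the integer $t$ is read off from the matrix $B$ of the exceptional edge via the coordinate conventions of Section~\ref{coordinate} and Definition~\ref{neatsimpleWLG}(2).

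Finally, part~$3$ is an orientation bookkeeping identical in spirit to the computation omitted in Theorem~\ref{noV5link}: one carries the orientation conventions of Definition~\ref{neatsimpleWLG} along the unique oriented path from $V_0$ to each other vertex and compares with the signature $Sign(\cdot,V_0)$ of Definition~\ref{signvet}, so that $Sign(V_1,V_0)$ fixes whether $K_0\cup K_1$ is a positive or negative Hopf link and $Sign(V,V_0)$ fixes the orientation of the cable attached to $V$. I expect this orientation bookkeeping to be the only delicate point, in particular keeping track of the extra sign that the order-two singular fibre contributes in the $V5$ clause of Definition~\ref{signvet}; the topological identification of the link type itself is immediate from the Seifert picture above.
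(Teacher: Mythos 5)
Your proposal is correct and follows essentially the same route as the paper, which omits the details and simply indicates that the argument parallels Theorem~\ref{noV5link} using part~$2$ of Corollary~\ref{topN0} together with Definitions~\ref{neatsimpleWLG} and~\ref{signvet} --- exactly the skeleton you flesh out. Your elaborations (the Seifert structure $\Sigma(0,0;\frac{1}{2},\beta/\alpha)$ from Lemma~\ref{simpleWLG3}, the identification of $K_0\cup K_1$ as a Hopf link via the $T^2\times[0,1]$ exterior, and reading the cable type off the exceptional matrix $B$) are sound and consistent with the paper's intended argument.
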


For instance, we can read the indexed links of the WLG in Example \ref{exampleWLG} as follows.

\begin{example}\label{examplelink}
\begin{enumerate}
  \item Suppose $L_1= \{K_A, K_{A_1}, K_{V_1}, K_{V_2}, K_{V_3}, K_R, K_{R_1}, K_{R_2}\}$ is the indexed link of $G_1$ in Example \ref{exampleWLG}. Here $K_V$ is the indexed knot associated to $V$.   Then by Theorem \ref{noV5link}, $L_1$ satisfies the following conditions.
      \begin{enumerate}
        \item If we forget the orientation of $L_1$, $K_A \cup K_R$ is a Hopf link and $L-K_A \cup K_R$ is composed of $6$ $(3,2)$
        cable knots of $K_A$.
        \item Now we provide the orientation information. $K_A \cup K_R$ is a negative Hopf link, everyone of $\{K_{A_1}, K_{V_3},
        K_{R_1}, K_{R_2}\}$ is a $(3,2)$ cable knot and everyone of $\{K_{V_1}, K_{V_2} \}$ is a $-(3,2)$ cable knot.
      \end{enumerate}

  \item  Suppose $L_2= \{K_A,  K_{V_1}, K_{V_2}, K_{V_3},  K_{R_1}, K_{R_2}, K_{R_3}\}$ is the indexed link of $G_2$ in Example \ref{exampleWLG}. Here $K_V$ is the indexed knot associated to $V$.   Then by Theorem \ref{V5link}, $L_2$ satisfies the following conditions.
      \begin{enumerate}
        \item If we forget the orientation of $L_1$, $K_A \cup K_{V_2}$ is a Hopf link and $L-K_A \cup K_2$ is composed of $5$ $(5,2)$
        cable knots of $K_A$.
        \item Now we provide the orientation information. $K_A \cup K_{V_2}$ is a negative Hopf link, everyone of $\{K_{V_3}, K_{R_1},
        K_{R_2}, K_{R_3}\}$ is a $(3,2)$ cable knot and $K_{V_1}$ is a $-(5,2)$ cable knot.
      \end{enumerate}
\end{enumerate}
\end{example}

\begin{remark}
Actually, if an indexed link $L$ on $S^3$  satisfies the conditions $1$ and $2$ of Theorem \ref{noV5link} or Theorem \ref{V5link}, then
there exits a depth $0$ NMS flow
on $S^3$ with $L$ as its periodic orbits. This can be proved by a routine construction similar to the proof of Theorem \ref{simpleWLG}.
\end{remark}

By the description of indexed links of depth $0$ NMS flows, we can positively answer Question \ref{simpleflow} for depth $0$ NMS flows.

\begin{corollary} \label{anwq}
Suppose  $\phi_t$ is  a depth $0$ NMS flow on $S^3$ and  can be associated to a simple WLG. Then every WLG of $\phi_t$ is simple.
\end{corollary}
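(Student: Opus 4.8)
The plan is to push everything through the indexed link of $\phi_t$, which by Proposition \ref{decidelink} is an invariant of the flow and is therefore the \emph{same} link no matter which WLG we read it off from. First I would fix a simple WLG of $\phi_t$; by Theorem \ref{simpleWLG} it may be taken to be a neat simple WLG $G$, and then Theorem \ref{noV5link} (if $G$ has no $V5$ vertex) or Theorem \ref{V5link} (if it does) gives an explicit model for the indexed link $L$: a Hopf link $K_0\cup K_1$ together with $n-2$ pairwise parallel $(p,q)$-cables of $K_0$, with $(p,q)=(2t-1,2)$ in the $V5$ case. The feature I want to extract is that $L$ is \emph{non-split}. Indeed, each cable $C$ satisfies $\mathrm{lk}(C,K_0)=q$ and $\mathrm{lk}(C,K_1)=\pm p$, and since $(p,q)$ is the first row of a matrix in $PSL(2,\mathbb{Z})$ it is never $(0,0)$; hence every component of $L$ is joined to the Hopf link by a nonzero linking number, so the linking graph of $L$ is connected. (Equivalently, by Corollary \ref{topN0} the periodic orbits are regular or exceptional fibers of a Seifert fibration of $S^3$, and distinct fibers of such a fibration always have nonzero linking number.) In particular, no component of $L$ is an unknot that is unlinked from all the others.

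Next I would argue by contradiction. Suppose $\phi_t$ admits a WLG $G'$ that is not simple; then $G'$ carries a vertex labeled $V_1$, $V_2$ or $V_3$. Performing the flow split of $\phi_t$ at that vertex and invoking Proposition \ref{Wadaop}, the indexed link $L$ is recovered from the two resulting links $L_1,L_2$ by Wada's operation I, II or III respectively. In all three cases the stated formula exhibits $L$ as a split sum in which $K_{V_i}$ — the index $1$ orbit sitting at that vertex — is a \emph{trivial} knot split off from the remaining components. Because $L$ always has at least two components (the attractor and repeller giving $K_0\cup K_1$), the remainder is nonempty, so $L$ would have an unknotted component unlinked from everything else; that is, $L$ would be split. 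This contradicts the conclusion of the first paragraph. Hence no WLG of $\phi_t$ can contain a $V_1$, $V_2$ or $V_3$ vertex, which is exactly the assertion that every WLG of $\phi_t$ is simple.

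I expect the only genuine work to lie in the first paragraph, namely pinning down the non-splitness of $L$ rigorously. The cleanest route is the linking-number computation above, whose key input is simply that the cabling vector $(p,q)$ cannot vanish; the Seifert-fiber description from Corollary \ref{topN0} is an alternative that avoids coordinates altogether. The second paragraph should then be a direct application of Proposition \ref{Wadaop}, the only point needing a line of justification being that the split summand $K_{V_i}$ is separated from a \emph{nonempty} remainder, which is guaranteed by $L$ having more than one component. One should also remark that this gives the expected structural consequence: a normal saddle orbit of a depth $0$ flow with simple WLG is always of type $V_4$ (never $V_1,V_2,V_3$) for \emph{every} Lyapunov function, since being a linked cable rather than a split unknot is a property of the orbit itself.
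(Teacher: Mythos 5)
Your proposal is correct and follows essentially the same route as the paper's own proof: non-splitness of the link of a flow with a simple WLG via Theorems \ref{noV5link} and \ref{V5link}, against the split trivial index-$1$ component forced by Proposition \ref{Wadaop} for any non-simple WLG. Your linking-number argument (using that $(p,q)$ is a row of a matrix in $PSL(2,\mathbb{Z})$, hence nonzero) merely fills in a detail the paper leaves implicit, and it is sound.
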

\begin{proof}
By Theorem \ref{noV5link} and Theorem \ref{V5link},  the indexed link of a depth $0$ NMS flow on $S^3$ endowed with a simple WLG doesn't contain a splitable trivial
knot.
On the other hand, by Proposition \ref{Wadaop}, the indexed link of a depth $0$ NMS flow on $S^3$ endowed with a  WLG which isn't simple contains a
 splitable indexed $1$ trivial knot. The corollary is a consequence of these two facts.
\end{proof}

\section{A simplified Umanskii Theorem} \label{Umanskii}
For depth $0$ NMS flows on $S^3$, it is obvious that WLG is quite sensitive to topologically equivalent classes. One naturally expect
that WLG can detect a depth $0$ NMS flow up to topological equivalence. But indeed, this is not true. In particular, it isn't sensitive to
the combinatorial information about the stable and unstable manifolds of the saddle periodic orbits.

In this section, we focus on a criterion whether two depth $0$ NMS flows on $S^3$ are topologically equivalent.
As we have introduced in Section \ref{introduction}, Umanskii \cite{Um} has systematically
discussed such a question. He have constructed some combinatorial invariants. For convenient, we call them  Umanskii invariant.
Actually in the case of depth $0$ NMS flows, the dynamical system  is much simpler than a generical study
of NMS flows. For distinguishing the topologically equivalent classes of   depth $0$ NMS flows,  form our viewpoint, a reduced form of $\omega$-schemes in Umanskii invariant is enough. This reduced form
is called  accompany graph which is
used to describe the relative position of invariant
manifolds of saddle periodic trajectories near attractors and repellers.

\begin{definition}
An abstract \emph{accompany graph} is a tori set $\mathcal{T} =\{T_1,\dots, T_n\}$  with pairwise disjoint oriented simple closed curves $\{c_1, \dots, c_m\}$
 in $T_1 \cup\dots \cup T_n$.
\end{definition}

\begin{definition}
For a depth $0$ NMS flow $\phi_t$, it naturally provides an accompany graph as follows.
Suppose $\mathcal{T}$ is the boundary of a small tubular  neighborhood of  the attractors and the repellers  of $\phi_t$.
Let $c_1 \cup \dots \cup c_m$ be the intersection of the invariant manifold of the saddle periodic orbits  and $\mathcal{T}$. Then $(\mathcal{T},\{c_1, \dots, c_m\})$ is called  \emph{an accompany graph of $\phi_t$}.
\end{definition}

\begin{definition}\label{accequ}
Let $G_1$ and $G_2$ be two WLG of two depth $0$ NMS flows $\phi_t^1$ and $\phi_t^2$ correspondingly. Suppose $\{c_1^1, \dots, c_m^1\}$ and $\{c_1^2, \dots, c_m^2\}$ are the corresponding accompany graphs on $\partial V^1 = T_1^1\cup \dots \cup T_n^1$ and $\partial V^2 =T_1^2\cup \dots \cup T_n^2$ correspondingly. Here $V^1$ and $V^2$ are two small tubular  neighborhoods of  the attractors and the repellers  of $\phi_t^1$ and $\phi_t^2$ correspondingly.  Then we call such two accompany graphs are \emph{equivalent under $G_1$ and $G_2$ by $g$} if $G_1$ and $G_2$ are WLG equivalent under some WLG map $g$ and there exists a homeomorphism $f: \partial V^1 \rightarrow \partial V^2$ satisfies the following conditions.
\begin{enumerate}
  \item $f$ coordinates to $g$, i.e., if $f(T_1^i)=T_2^j$ ($i,j \in\{1,\dots,n\}$) and $\gamma_i^1$ and $\gamma_j^2$ are the periodic orbits associated to $T_1^i$ and $T_2^j$ correspondingly, then $g(\langle \gamma_i^1 \rangle)=\langle\gamma_j^2 \rangle$.  Here $\langle \gamma_i^1 \rangle$ and $\langle\gamma_j^2 \rangle$ are the vertices in $G_1$ and $G_2$ associated to $\gamma_i^1$ and $\gamma_j^2$ correspondingly.
  \item Under the coordinates associated to $G_1$ and $G_2$, $f$ is isotopic to  $\left(
                                                                                     \begin{array}{cc}
                                                                                       1 & 0 \\
                                                                                       0 & 1 \\
                                                                                     \end{array}
                                                                                   \right)$.
  \item For each $i\in \{1,\dots,m\}$, $f(c_i^1)=c_i^2$.
  \item If $c_i^1$ belongs to the stable (unstable)  manifold of a saddle periodic orbit $\gamma^1$ of $\phi_t^1$, then
      $c_i^2$ belongs to the stable (unstable)  manifold of a saddle periodic orbit $\gamma^2$ of $\phi_t^2$. Moreover, $\gamma^1$ and $\gamma^2$ are correspondent under $g$.
\end{enumerate}
\end{definition}

\begin{remark}
Suppose $\{c_1,\dots, c_m\}$ and $\{c_1',\dots, c_m'\}$ are two  accompany graphs of a depth $0$ NMS flow $\phi_t$.  Then we can easily check that they are equivalent. Therefore, in the following of the paper, we think the accompany graphs of $\phi_t$ are unique.
\end{remark}

There exists a natural restriction about $f$ in Definition \ref{accequ}. The restriction is useful in the next section.
First we define a subgroup $\Gamma$ of $SL(2,\mathbb{Z})$.

Let $$\Gamma=\left\langle \left(
            \begin{array}{cc}
              1 & 0 \\
              p & 1 \\
            \end{array}
          \right)
  , \left(
            \begin{array}{cc}
              1 & 0 \\
              q & -1 \\
            \end{array}
          \right)
 \right\rangle,\ (p,q \in \mathbb{Z})$$ be a subgroup of $SL(2,\mathbb{Z})$ generated by $\left(
            \begin{array}{cc}
              1 & 0 \\
              p & 1 \\
            \end{array}
          \right)$
          and $\left(
            \begin{array}{cc}
              1 & 0 \\
              q & -1 \\
            \end{array}
          \right)$.

 \begin{proposition}\label{df}
 Let $\phi_t^1$ and $\phi_t^2$ be  two depth $0$ NMS flows on $S^3$. Suppose $f: S^3 \rightarrow S^3$ is a topologically equivalent map from $\phi_t^1$ to $\phi_t^2$. Moreover, suppose $V_1$ is a filtrating neighborhood of an attractor $\gamma_1$ in $\phi_t^1$. Set $\gamma_2 = f(\gamma_1)$ and $V_2 = f(V_1)$. Under two given coordinates as Section \ref{coordinate} and up to mapping class group, $f|_{\partial V_1} \in \Gamma$.
 \end{proposition}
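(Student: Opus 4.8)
The plan is to read off the induced mapping-class-group element of the boundary torus directly from the two defining properties of a topological equivalence: that $f$ carries the solid torus $V_1$ homeomorphically onto the solid torus $V_2$, and that it preserves the orientation of trajectories. Both $V_1$ and $V_2=f(V_1)$ are filtrating neighborhoods of attractors, hence solid tori $D^2\times S^1$ with cores $\gamma_1$ and $\gamma_2=f(\gamma_1)$; their boundaries are tori, and with the coordinates $(l_i,m_i)$ fixed as in Section \ref{coordinate} the restriction $f|_{\partial V_1}\colon \partial V_1\to\partial V_2$ is encoded, up to isotopy, by a matrix $A\in GL(2,\mathbb{Z})$ sending the coordinate column of a curve on $\partial V_1$ to that of its image on $\partial V_2$. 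It suffices to show $A=\left(\begin{array}{cc} 1 & 0 \\ n & \varepsilon \end{array}\right)$ for some $n\in\mathbb{Z}$ and $\varepsilon\in\{1,-1\}$, since these are exactly the elements of $\Gamma$.

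First I would pin down the meridian column. The meridian $m_i$ is the unique isotopy class of essential simple closed curve on $\partial V_i$ bounding an embedded disk in $V_i$. As $f$ is a homeomorphism of solid tori, it carries a meridian disk of $V_1$ onto a meridian disk of $V_2$, so $f(m_1)$ is isotopic to $\pm m_2$. In coordinates this says the image of $\left(\begin{array}{c} 0 \\ 1 \end{array}\right)$ is $\left(\begin{array}{c} 0 \\ \varepsilon \end{array}\right)$ with $\varepsilon=\pm1$; hence $A=\left(\begin{array}{cc} a & 0 \\ c & \varepsilon \end{array}\right)$ and $\det A=a\varepsilon=\pm1$, so $a=\pm1$.

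Next I would pin down the longitude column using the flow. By construction $l_1$ is a curve on $\partial V_1$ isotopic in $V_1$ to the core $\gamma_1$ and oriented by the flow direction along $\gamma_1$, and similarly for $l_2,\gamma_2$. Since $f$ maps $V_1$ onto $V_2$ carrying the core $\gamma_1$ to the core $\gamma_2$, the image $f(l_1)$ is again a longitude of $V_2$, so it meets $m_2$ algebraically once; this recovers $a=\pm1$ in agreement with the determinant computation. The sign is fixed by the second defining property: $f$ preserves the orientation of trajectories, hence carries the flow orientation of $\gamma_1$ to that of $\gamma_2$, whence $f(l_1)=l_2+n\,m_2$ with $l_2$-coefficient exactly $+1$. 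Thus the first column of $A$ is $\left(\begin{array}{c} 1 \\ n \end{array}\right)$, and combining with the previous step gives $A=\left(\begin{array}{cc} 1 & 0 \\ n & \varepsilon \end{array}\right)\in\Gamma$.

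The only genuinely delicate point is the bookkeeping of signs, so it is worth isolating why the top-left entry is always $+1$ while both values $\varepsilon=\pm1$ really occur. The entry $+1$ is rigid because a topological equivalence preserves trajectory orientation, hence the flow-induced orientation of the core and of each $l_i$; a reversal there is impossible, which is why no element of $\Gamma$ has top-left entry $-1$. By contrast $\varepsilon$ records whether $f$ preserves or reverses the ambient orientation on a collar of $\partial V_1$: since a topological equivalence need not preserve the orientation of $S^3$, and since the flow is transverse to $\partial V_1$ with its direction preserved, the induced orientation on the torus may be preserved ($\varepsilon=1$) or reversed ($\varepsilon=-1$). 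This is precisely why $\Gamma$ is generated by $\left(\begin{array}{cc} 1 & 0 \\ p & 1 \end{array}\right)$ together with the orientation-reversing involution $\left(\begin{array}{cc} 1 & 0 \\ q & -1 \end{array}\right)$, and it completes the identification $f|_{\partial V_1}\in\Gamma$.
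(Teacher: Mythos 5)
Your proposal is correct and takes essentially the same route as the paper: the paper's own (very terse) proof derives the statement from exactly your two key facts, namely that $f$ carries a meridian of $V_1$ to a meridian of $V_2$ and that $f(\gamma_1)=\gamma_2$ with flow orientations preserved, leaving the coordinate bookkeeping to the reader. Your explicit computation that these facts force the matrix $\left(\begin{array}{cc} 1 & 0 \\ n & \varepsilon \end{array}\right)$ with $\varepsilon=\pm 1$ (the $+1$ rigid by trajectory-orientation preservation, the sign $\varepsilon$ free because a topological equivalence need not preserve the ambient orientation) simply spells out what the paper calls ``easily followed.''
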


\begin{proof}
The proposition can be easily followed from the following facts and the definitions of coordinates in Section \ref{coordinate}.
\begin{itemize}
  \item $f(\gamma_1) =\gamma_2$ and $f$ preserves the orientations.
  \item Suppose $c$ is a meridian of $V_1$. Then $f(c)$ is a meridian of $V_2$.
\end{itemize}
\end{proof}

Now we can say that WLG and accompany graph decide the topologically equivalent class of depth $0$ NMS flows on $S^3$. More precisely, we have the following main theorem of this section.

\begin{theorem}\label{topequ}(a simplified Umanskii Theorem)
Let $\phi_t^1$ and $\phi_t^2$ be two depth $0$ NMS flows on $S^3$. Assume that there exist WLG $G_1$ and $G_2$ of $\phi_t^1$ and $\phi_t^2$ correspondingly which satisfy the following conditions:
\begin{enumerate}
  \item $G_1$ and $G_2$ are WLG equivalent under a WLG map $g:G_1 \rightarrow G_2$;
  \item the accompany graphs of $\phi_t^1$ and $\phi_t^2$ are equivalent under $G_1$ and $G_2$ by $g$.
\end{enumerate}
Then $\phi_t^1$ and $\phi_t^2$ are topologically equivalent.
\end{theorem}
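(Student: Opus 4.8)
The plan is to build the topological equivalence $F:S^3\to S^3$ by assembling homeomorphisms defined piece by piece on the filtrating neighborhoods, using the WLG map $g$ to match corresponding pieces and the accompany graph equivalence to pin down the boundary behavior so that the local homeomorphisms fit together. First I would use condition $1$ to organize the decompositions. Since $g:G_1\to G_2$ preserves every weight, it induces a bijection between the filtrating neighborhoods of $\phi_t^1$ and those of $\phi_t^2$ which sends each piece to a piece of the same type ($R,A,V_1,\dots,V_5$) and each gluing torus to a gluing torus carrying the same matrix. Write $V^1=\bigsqcup_i V_i^1$ for the union of the attractor and repeller neighborhoods of $\phi_t^1$, each a solid torus with the standard sink or source flow, and $W^1=\overline{S^3-V^1}$ for the union of the saddle simple pieces; by Corollary \ref{topN0} each component of $W^1$ is Seifert fibered, homeomorphic to $F_n\times S^1$ or to $\Sigma(0,n;\frac{1}{2})$, with the periodic orbits and the boundary longitudes realized as fibers. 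I use the analogous notation $V^2,W^2$ for $\phi_t^2$.

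Next I would extend the accompany graph data over the attractor and repeller neighborhoods. Condition $2$ supplies a homeomorphism $f:\partial V^1\to\partial V^2$ that is coherent with $g$, is isotopic to the identity in the chosen coordinates, carries each separatrix trace $c_i^1$ to $c_i^2$, and respects the stable/unstable labelling. On each solid torus $V_i^1$ the flow is a standard sink or source with core the periodic orbit, so $f|_{\partial V_i^1}$, being orientation compatible and meridian preserving (this is exactly the constraint recorded by Proposition \ref{df}), extends by coning along the flow toward the core orbit to a topological equivalence $F_V:V^1\to V^2$.

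Then I would extend across the saddle pieces and glue. On $W^1$ the flow is vertical with respect to the Seifert fibration, the saddle orbits are (regular or singular) fibers, and, because $\phi_t^1$ is depth $0$, every stable or unstable manifold runs with no heteroclinic detour from a saddle fiber out to $\partial W^1$, meeting it precisely in the curves $c_i^1$. The boundary map $F_V|_{\partial W^1}$ already sends fibers to fibers, since the boundary longitudes are fibers and $f$ is isotopic to the identity on coordinates, and it matches the separatrix traces; such a fiber preserving boundary homeomorphism of a trivial or single singular fiber Seifert piece over a planar base extends to a fiber preserving homeomorphism $F_W:W^1\to W^2$ sending separatrices to separatrices, and pushing along the flow inside the fibers upgrades $F_W$ to a topological equivalence. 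Since $F_V$ and $F_W$ agree on $\partial V^1=\partial W^1$, they glue to the desired global equivalence $F$, which preserves trajectories and orientation by construction.

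The main obstacle is the saddle step: one must verify that the flow on a saddle simple piece is rigid enough that matching the fiber structure together with the boundary positions of the separatrices, which is exactly the data encoded by the gluing matrices of the WLG and by the accompany graph, determines the piece up to a trajectory preserving homeomorphism. This is precisely where the depth $0$ hypothesis is essential, since it forces the internal separatrix pattern to be the straight through product pattern of the model flow on $F_n\times S^1$ (or its twisted analogue on $\Sigma(0,n;\frac{1}{2})$), with no saddle to saddle recombination that could obstruct the fiber preserving extension of $F_V|_{\partial W^1}$.
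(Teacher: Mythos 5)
Your proposal has a genuine gap, in fact two. First, your global decomposition is only valid for flows with a \emph{simple} WLG: Corollary \ref{topN0}, which you invoke to say every component of $W^1=\overline{S^3-V^1}$ is $F_n\times S^1$ or $\Sigma(0,n;\frac{1}{2})$ with orbits and boundary longitudes as fibers, is proved only for saddle simple pieces of flows admitting a simple WLG. A general depth $0$ NMS flow may contain filtrating neighborhoods of types $1$, $2$, $3$ of Proposition \ref{filnghd}, which are connected sums such as $(T_1\times[0,1])\sharp(T_2\times[0,1])$ and contain essential separating spheres; the complement of the attractor/repeller neighborhoods is then not Seifert fibered and your fiber-preserving machinery does not even get started. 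Repairing this forces you to first split along these pieces and reassemble, i.e.\ to set up an induction over the Lyapunov tree --- which is exactly the structure of the paper's proof, via the graph-theoretic Lemma \ref{isov} (existence of an isolated inner vertex) and Lemma \ref{splitrule} (the hypotheses of the theorem persist under cutting along an inner edge).

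Second, the saddle step, which you yourself name as the main obstacle, is asserted rather than proved, and it hides two distinct difficulties. (i) The accompany graph records separatrix traces only on $\partial V^1$, the boundaries of the attractor and repeller neighborhoods; it says nothing directly about the traces on the \emph{internal} gluing tori between adjacent saddle filtrating neighborhoods, so your claim that boundary data plus the fibration pins down the internal separatrix pattern needs an actual argument (the paper supplies the corresponding fact inside the proof of Lemma \ref{splitrule}, using the trichotomy that each trace curve is parallel to the longitude, parallel to the meridian, or inessential). (ii) Even granting matched separatrices, a fiber-preserving homeomorphism $F_W$ agreeing with $F_V$ on the boundary need not send trajectories to trajectories, and ``pushing along the flow inside the fibers'' is precisely the unproved content. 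The paper avoids constructing the conjugacy by hand altogether: its key Lemma \ref{glurule} shows that if two isotopic gluing maps keep the unstable traces disjoint from, and isotopic relative to, the stable traces, then the glued flows are joined by a path of depth $0$ NMS flows, and \emph{structural stability} of NMS flows yields the topological equivalence. Without a substitute for this lemma, or an honest proof of your rigidity claim on $F_n\times S^1$ and $\Sigma(0,n;\frac{1}{2})$, your construction of $F_W$, and hence the global $F$, does not go through.
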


As the name of the theorem, this theorem is a simplified theorem of the main theorem of Umanskii's paper \cite{Um} (we call it Umanskii Theorem) which says that
Umanskii invariant decide the topologically equivalent class of Morse-Smale flows on three manifolds. Let's explain more about this. Indeed, WLG carries information
on limit behavior of wandering trajectories and topology of connected components of the
set $S^3$ without closure of invariant stable and unstable manifolds of closed saddle
trajectories. The same information may be deduced from the cells which are part of
Umanskii invariant. Accompany graph  describes relative position of invariant
manifolds of saddle periodic trajectories near sinks and sours, and the same information is
given by $\omega$-scheme in Umanskii invariant. Theorem \ref{topequ} can be regarded as a consequence of Umanskii Theorem under these relationships.   Therefore, we omit the proof of the theorem in the main part of the paper.
The reader also can find a proof of Theorem \ref{topequ} in Appendix \ref{appendix} of the paper.

The following corollary is a consequence of Theorem \ref{topequ}.

\begin{corollary}\label{finite}
Let $G$ be a WLG (in particular, a neat WLG) associated to a depth $0$ NMS flow on $S^3$, then up to topological equivalence, there are finitely many
depth $0$ NMS flows on $S^3$ with $G$ as a WLG.
\end{corollary}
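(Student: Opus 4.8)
The plan is to derive the corollary from Theorem \ref{topequ} by a finiteness-of-combinatorics argument. By Theorem \ref{topequ}, once the WLG $G$ is fixed, any two depth $0$ NMS flows $\phi_t^1,\phi_t^2$ carrying $G$ already satisfy the first hypothesis of that theorem (take $G_1=G_2=G$ and $g=\mathrm{id}$), so they are topologically equivalent as soon as their accompany graphs are equivalent in the sense of Definition \ref{accequ}. Thus the assignment sending a flow with WLG $G$ to the equivalence class of its accompany graph is injective on topological equivalence classes, and it suffices to prove that for a fixed $G$ this assignment has finite image, i.e. that only finitely many accompany graphs up to equivalence can occur.

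First I would fix the ambient data. The tori $\mathcal{T}=\{T_1,\dots,T_n\}$ are the boundaries of the tubular neighborhoods of the attractors and repellers, which correspond precisely to the $A$ and $R$ vertices of $G$; together with the coordinate systems $(l_i,m_i)$ of Section \ref{coordinate}, the pair $(\mathcal{T},\{(l_i,m_i)\})$ is determined by $G$. Next I would pin down the isotopy classes of the curves $c_1,\dots,c_m$. Each $c_j$ is a component of the intersection of a stable or unstable manifold of a saddle orbit with some $T_i$, and the number of such components, the torus each lives on, and the saddle (with its stable/unstable label) it comes from are all read off from $G$ via Proposition \ref{filnghd}. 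Passing to the simple WLG decomposition (Proposition \ref{SWLGD}) and applying Corollary \ref{topN0} to each saddle simple piece, every such curve is isotopic to the fiber/longitude direction on its torus, so on each fixed $T_i$ all curves lying on it are mutually parallel essential simple closed curves whose common slope is forced by the gluing matrices of $G$.

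Then I would observe that the only datum \emph{not} already fixed by $G$ is the combinatorial arrangement of these finitely many parallel oriented curves on each torus. Since a self-homeomorphism of a torus isotopic to the identity preserves the cyclic order of a family of parallel simple closed curves, the equivalence of Definition \ref{accequ} — which forces the matching homeomorphism $f$ to be isotopic to the identity matrix and to respect the saddle and orientation labels — identifies two accompany graphs exactly when, torus by torus, their labeled oriented parallel curves agree up to this cyclic order. On a single torus carrying $k_i$ labeled oriented parallel curves there are only finitely many such cyclic arrangements, and taking the product over the finitely many tori of $\mathcal{T}$ yields only finitely many accompany graphs up to equivalence. Combined with the injectivity above, this proves the corollary.

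The step I expect to be the main obstacle is the middle one: verifying rigorously that $G$ pins down the actual slope (not merely the homology class up to sign) of every $c_j$ on every $T_i$, and, crucially, that no continuous modulus is hidden in the relative position of the curves — that is, that the transverse placement of the parallel curves genuinely contributes only the finite cyclic-order data and not a real parameter. This requires careful bookkeeping with the coordinate conventions of Section \ref{coordinate} and with the Seifert fiber structure supplied by Corollary \ref{topN0}, together with a check that the normalization ``$f$ isotopic to $\left(\begin{smallmatrix}1&0\\0&1\end{smallmatrix}\right)$'' in Definition \ref{accequ} indeed kills all such continuous freedom.
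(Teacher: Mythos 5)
Your proposal follows exactly the paper's (implicit) route: the paper offers no written proof of Corollary \ref{finite}, merely asserting it is a consequence of Theorem \ref{topequ}, and your argument --- take $G_1=G_2=G$ with $g=\mathrm{id}$ so that Theorem \ref{topequ} makes the passage to accompany-graph classes injective on topological equivalence classes, then bound the accompany graphs compatible with $G$ by finite combinatorial data with no continuous moduli --- is precisely the intended one. One small correction to your middle step: for a non-simple $G$ the curves $c_j$ need not all be mutually parallel \emph{essential} curves (the filtrating neighborhoods of types 1--3 in Proposition \ref{filnghd} produce inessential intersection curves, cf.\ $w^s_2$ and $w^u_2$ in Section \ref{3reper}), and your appeal to the simple WLG decomposition is slightly off since the accompany graph belongs to the original, unsplit flow; but this costs nothing, because finitely many disjoint oriented circles on a torus, essential or not, admit only finitely many configurations (slopes, cyclic order, and nesting pattern) up to homeomorphisms isotopic to the identity, so the finiteness count and hence the corollary go through unchanged.
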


\begin{remark}
Maybe two depth $0$ NMS flows $\phi_t^1$ and $\phi_t^2$ on $S^3$ are topologically equivalent but they don't satisfy the condition in Theorem \ref{topequ}. For instance, two depth $0$ NMS flows $\phi_t^1$ and $\phi_t^2$ with two periodic orbits such that the periodic orbits of $\phi_t^1$ form a positive Hopf link and the periodic orbits of $\phi_t^2$ form a negative Hopf link.  It is not difficult to check that $\phi_t^1$ and $\phi_t^2$ are topologically equivalent but there doesn't exist a WLG of $\phi_t^1$ and $\phi_t^2$ satisfying the condition in Theorem \ref{topequ}.
For more information about these flows, see Section \ref{2per}.
\end{remark}

\section{Depth $0$ NMS flows with periodic orbits number no more than $4$} \label{example}
In this section, we will list and classify all kinds of depth $0$ NMS flows on $S^3$ up to $4$ periodic orbits.  Obviously the periodic orbits number $n$ of a depth $0$ NMS flow with periodic orbits number no more than $4$ has three possibilities: $n=2$, $n=3$ or $n=4$.  Notice that
 in the cases $n=2$ and $n=3$, every NMS flow is a depth $0$ NMS flow. Therefore, actually we will classify NMS flows on $S^3$ with periodic orbits
 number $2$ and $3$.

 We will divide the discussion to
three cases under periodic orbits number.
For each case, firstly we will list some depth $0$ NMS flows.
Then one hand, we will show that such a list is complete
for each case. The main tools here we use are the theorems in  the last three sections. But sometimes, more detailed arguments are also needed.  Two parameters are essential: one is the orientation of the periodic orbits and the other is the accompany graphs.
On the other hand,  we will use some  topologically equivalent invariants (for instance, indexed links) to show that  the flows in the list are pairwise different.

\subsection{2 periodic orbits} \label{2per}

\begin{figure}[htp]
\begin{center}
  \includegraphics[totalheight=4.2cm]{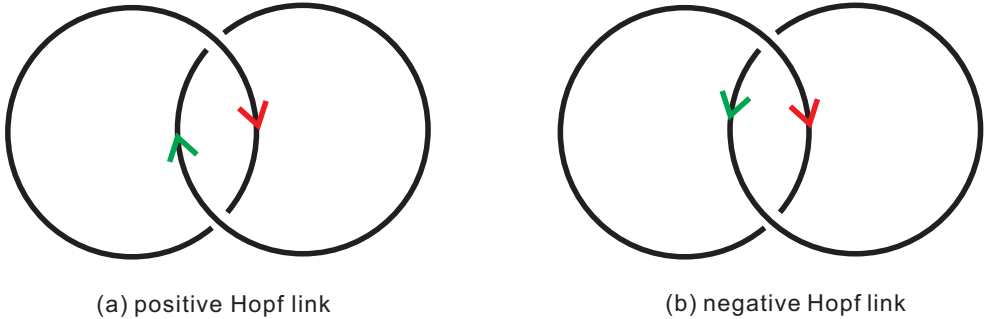}\\
  \caption{Hopf link}\label{hopflink}
  \end{center}
\end{figure}

\begin{lemma}\label{X}
There exist two NMS flows  $\phi_1^t$ and  $\phi_2^t$ on $S^3$ and  a homeomorphism $f:S^3 \rightarrow S^3$ such that they satisfy the following three conditions.
                \begin{enumerate}
                  \item The number of periodic orbits of either $\phi_1^t$ or $\phi_2^t$ is $2$.
                  \item The periodic orbits of  $\phi_1^t$ and $\phi_2^t$ form a positive and a negative Hopf links correspondingly.
                  \item $f$ is a topologically equivalent homeomorphism of $\phi_1^t$  and $\phi_2^t$ by sending the flowlines of $\phi_1^t$ to the flowlines of $\phi_2^t$.
                \end{enumerate}
\end{lemma}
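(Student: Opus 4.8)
The plan is to construct $\phi_1^t$ explicitly as the simplest attractor--repeller NMS flow on $S^3$, and then to produce $\phi_2^t$ together with the required homeomorphism $f$ in one stroke, by conjugating $\phi_1^t$ by an \emph{orientation-reversing} self-homeomorphism of $S^3$. The crucial point is that, in the definition of topological equivalence, $f$ is only required to preserve the orientations of the trajectories (the flow direction), not the orientation of the ambient $S^3$; this extra freedom is exactly what exchanges the two chiralities of the Hopf link.

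First I would build $\phi_1^t$. Write $S^3 = (S^1\times D^2)\cup (D^2\times S^1)$ as the genus-one Heegaard splitting, the two solid tori being glued along their common boundary torus so that a meridian of one is identified with a longitude of the other. On $S^1\times D^2$ choose a vector field making the core $K_1=S^1\times\{0\}$ a hyperbolic repelling closed orbit whose flow spirals outward and is transverse to the boundary torus; symmetrically, on $D^2\times S^1$ make the core $K_0=\{0\}\times S^1$ a hyperbolic attracting closed orbit that the flow enters transversally. Matching the two fields along the Heegaard torus, where both are transverse (pointing off the repeller solid torus and into the attractor solid torus), yields a smooth flow $\phi_1^t$ whose non-wandering set is exactly the two hyperbolic orbits $K_0$ and $K_1$; as there are no saddles, the transversality condition is vacuous, so $\phi_1^t$ is an NMS flow. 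Since $K_0$ and $K_1$ are the cores of the two Heegaard solid tori they form a Hopf link, and after fixing the gluing and orientation conventions I may arrange that, oriented by the flow, $K_0\cup K_1$ is a \emph{positive} Hopf link. This settles condition~$1$ and the $\phi_1^t$-half of condition~$2$.

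Next I would fix an orientation-reversing diffeomorphism $f\colon S^3\to S^3$ --- for instance the restriction to the unit sphere of $(z_1,z_2)\mapsto(\bar z_1,z_2)$ on $\mathbb{C}^2$, which flips a single real coordinate and hence reverses orientation --- and set $\phi_2^t:=f\circ\phi_1^t\circ f^{-1}$. Then $f\circ\phi_1^t=\phi_2^t\circ f$ by construction, so $f$ maps each trajectory of $\phi_1^t$ onto a trajectory of $\phi_2^t$ preserving its time-direction; thus $f$ is a topological equivalence (condition~$3$) and $\phi_2^t$ is again a smooth NMS flow with exactly the two periodic orbits $f(K_0)$ (an attractor) and $f(K_1)$ (a repeller). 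To finish condition~$2$ I identify the chirality of $f(K_0)\cup f(K_1)$: the flow-induced orientations of these two knots are the pushforwards under $f$ of those of $K_0$ and $K_1$, and the linking number changes sign under an orientation-reversing homeomorphism, so $\mathrm{lk}\bigl(f(K_0),f(K_1)\bigr)=-\,\mathrm{lk}(K_0,K_1)=-1$. Hence the periodic orbits of $\phi_2^t$ form a \emph{negative} Hopf link.

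The step I expect to be the main obstacle is purely the orientation bookkeeping: one must be certain that ``preserves orientations'' in the definition of topological equivalence refers to the orientations of trajectories (so that an orientation-reversing ambient $f$ is legitimate), and one must track the flow-induced orientations of $K_0$ and $K_1$ carefully enough to be sure the sign of the linking number really flips from $+1$ to $-1$ under $f$. The remaining ingredients --- hyperbolicity of the two orbits, the (vacuous) transversality, and smoothness of the conjugated flow --- are routine once the model flow $\phi_1^t$ has been pinned down.
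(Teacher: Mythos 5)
Your proposal is correct and is essentially the paper's own argument: the paper likewise takes the attractor--repeller flow on the genus-one Heegaard splitting $S^3=\{|z_1|^2+|z_2|^2=1\}$ with the cores $c_1, c_2$ as the Hopf-linked periodic orbits, and obtains $\phi_2^t$ by pushing the vector field forward under the orientation-reversing map $f(z_1,z_2)=(z_1,\overline{z_2})$ (you conjugate the other coordinate, which is immaterial). Your bookkeeping via the sign change of the linking number under an orientation-reversing ambient homeomorphism matches the paper's direct observation that the conjugated flow's oriented orbits are $c_1\sqcup\overline{c_2}$, a negative Hopf link.
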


\begin{proof}
$S^3$ can be parameterized as $\{(z_1, z_2)\mid z_1, z_2 \in \mathbb{C}, |z_1|^2 + |z_2|^2 =1 \}$. We define $V_1= \{(z_1,z_2)\mid (z_1,z_2)\in S^3, |z_2|\leq \frac{\sqrt{2}}{2}\}$, $V_2= \{(z_1,z_2)\mid (z_1,z_2)\in S^3, |z_2|\geq \frac{\sqrt{2}}{2}\}$ and $T=V_1 \cap V_2= \{(z_1,z_2)\mid (z_1,z_2)\in S^3, |z_2| = \frac{\sqrt{2}}{2}\}$.
            Denote $c_1 = \{(z_1,z_2)\mid (z_1,z_2)\in S^3, z_2 =0\}$ and $c_2 = \{(z_1,z_2)\mid (z_1,z_2)\in S^3, z_1 =0\}$ to be the center of $V_1$ and $V_2$ correspondingly. Moreover, we parameterize $z_j =\rho_j e^{i \theta_j}$ ($j\in \{1,2\}$) and  endow $c_1$ and $c_2$ the orientations by the natural orientation of $\theta_j$.  In addition, we suppose $\overline{c_j}$ is $c_j$ with opposite orientation. It is easy to check that $c_1 \sqcup c_2$ is a positive Hopf link and $c_1 \sqcup \overline{c_2}$ is a negative Hopf link.

 We suppose $f:S^3 \rightarrow S^3$ such that $f(z_1,z_2)=(z_1, \overline{z_2})$. Obviously $f$ is an opposite orientation self-homeomorphism of $S^3$. One can easily provide an NMS flow $\phi_1^t$ provided by $C^1$ vector field $X^1$ such that $\phi_1^t$ satisfies the following conditions.
            \begin{enumerate}
              \item the periodic orbits of $\phi_1^t$ is $c_1 \sqcup c_2$ which is a positive Hopf link of $S^3$.
              \item $X^1$ is orthogonal to $T$.
            \end{enumerate}

Set $X^2=Df(X^1)$ and $\phi_2^t$ is the flow provided by  $X^2$. Then obviously $\phi_2^t$ is an NMS flow with
periodic orbits $c_1 \sqcup \overline{c_2}$ which is a negative Hopf link of $S^3$. Obviously, $\phi_1^t$, $\phi_t^2$ and $f$ satisfy the three conditions above.
\end{proof}

We denote $\phi_1^t$ by $X$, then we have the following proposition.

\begin{proposition}\label{2periorb}
Up to topological equivalence, there exists exactly  one NMS flow $X$ whose periodic orbits are composed of an attractor $A$ and a repeller $R$. Moreover, the periodic orbits $A \sqcup R$ form a Hopf link in $S^3$.
\end{proposition}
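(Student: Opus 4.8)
The plan is to break the statement into three pieces: existence of such a flow, the structural fact that the two orbits are one attractor and one repeller whose union is a Hopf link, and uniqueness up to topological equivalence. Existence is already furnished by Lemma~\ref{X}: the flow $X=\phi_1^t$ built there is an NMS flow on $S^3$ whose two periodic orbits $c_1\sqcup c_2$ form a (positive) Hopf link, with $c_1$ attracting and $c_2$ repelling. So the remaining work is the structural and uniqueness parts.

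First I would argue that a flow with exactly two periodic orbits cannot contain a saddle orbit. On a closed manifold an NMS flow always possesses at least one attractor and one repeller, since every trajectory has a periodic $\omega$-limit and a periodic $\alpha$-limit. If one of the two orbits were a saddle $\gamma$, then $W^u(\gamma)$ would have to accumulate on an attractor distinct from $\gamma$, and $W^s(\gamma)$ would emanate from a repeller distinct from $\gamma$, forcing at least three orbits, a contradiction. Hence the two orbits are exactly one attractor $A$ and one repeller $R$ (indexed $0$ and $2$ respectively, in the convention fixed in the preliminaries).

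Next, the Hopf-link claim. Since $A$ and $R$ are attracting and repelling periodic orbits, each admits a filtrating neighborhood homeomorphic to a solid torus $D^2\times S^1$ with the flow entering (resp. leaving) transversally through the boundary and the core equal to the orbit. Because there are no other orbits, cutting $S^3$ along a regular level torus $T$ of a Lyapunov function realizes $S^3=N(A)\cup_T N(R)$ as a union of two solid tori glued along $T$, i.e. a genus-$1$ Heegaard splitting of $S^3$. For the result to be $S^3$ the gluing homeomorphism must carry the meridian of one solid torus to a curve meeting the meridian of the other exactly once; it is then standard that the two cores $A$ and $R$ form a Hopf link, and the transverse flow direction through $T$ orients them consistently. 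In the language of the paper this is simply the two-vertex graph $R\to A$ with a single edge, whose gluing matrix in $PSL(2,\mathbb{Z})$ is constrained by the requirement that the ambient manifold be $S^3$.

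Finally, uniqueness up to topological equivalence. On each of $N(A)$ and $N(R)$ the restricted flow is topologically equivalent to the standard attracting (resp. repelling) solid-torus model, so the only residual freedom is the isotopy class of the gluing of $T$ compatible with yielding $S^3$ and with the transverse flow direction. Up to the symmetries preserving this structure this freedom collapses to two possibilities, distinguished by whether the resulting Hopf link is positive or negative. The main obstacle is exactly to see that these two are not genuinely different flows, and here I would invoke Lemma~\ref{X}: its orientation-reversing self-homeomorphism $f$ of $S^3$ carries $\phi_1^t$ to $\phi_2^t$ and thereby exhibits the positive-Hopf and negative-Hopf flows as topologically equivalent. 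Since topological equivalence is allowed to use any homeomorphism of $S^3$, including orientation-reversing ones, every flow of this type is topologically equivalent to $X$, which completes the uniqueness and hence the proposition.
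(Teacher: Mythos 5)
Your proposal is correct in substance but reaches the structural (Hopf-link) part by a genuinely different and more elementary route than the paper. The paper's own proof is essentially two citations plus Lemma~\ref{X}: it applies Theorem~\ref{noV5link} (the indexed-link description attached to neat simple WLG, read in the degenerate case of $n=2$ vertices and no saddle vertices) to conclude that the indexed link must be a positive or a negative Hopf link, and then Theorem~\ref{topequ} (the simplified Umanskii theorem, whose accompany-graph condition is vacuous here since there are no saddle orbits) to conclude that two such flows with the same indexed link are topologically equivalent. You instead argue directly: you first rule out a saddle orbit among two periodic orbits (a point the paper leaves implicit in the statement), and then observe that the two solid-torus filtrating neighborhoods meeting along a regular level torus give a genus-one Heegaard splitting of $S^3$, which forces the cores to form a Hopf link. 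What your route buys is independence from the WLG machinery of Sections 4--6 --- and it avoids invoking Theorem~\ref{noV5link} in a boundary case ($n=2$) that its proof, which works with saddle pieces $F_n\times S^1$, $n\geq 3$, does not literally address --- while the paper's route buys uniformity with the classification scheme used throughout Section 7.

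One caveat on your uniqueness step: the sentence asserting that the residual freedom in the gluing of $T$ ``collapses to two possibilities'' is precisely the content the paper delegates to Theorem~\ref{topequ} (proved via Lemma~\ref{glurule} in the appendix). As written it presumes two facts: that isotopic gluing maps yield topologically equivalent flows, and that meridional Dehn twists of the boundary torus extend over the solid tori as self-equivalences of the standard attracting and repelling models, so that the gluing matrix is only well defined up to $\left(\begin{smallmatrix}1&k\\0&1\end{smallmatrix}\right)$ as in Section~\ref{coordinate}. Both are true, and the first follows from Lemma~\ref{glurule}, whose separatrix hypotheses hold vacuously here; you should cite it (or Theorem~\ref{topequ} with suitable coordinates, as the paper does) rather than leave the normalization informal. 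Your final appeal to Lemma~\ref{X} to identify the positive- and negative-Hopf flows coincides exactly with the paper's.
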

\begin{proof}
 The proposition is a consequence of the following two facts and Lemma \ref{X}.
\begin{itemize}
 \item By Theorem \ref{noV5link}, the indexed link of one of such kind of flows is either a positive or a negative Hopf link, see Figure \ref{hopflink}.
 \item For such kind of NMS flows,  two flows with the same index link (a negative or positive Hopf link) are topologically equivalent. If we  choose some suitable coordinates, this can be followed by Theorem \ref{topequ}.
 \end{itemize}
 \end{proof}

\subsection{3 periodic orbits} \label{3per}
Suppose $\phi_t$ is an NMS flow on $S^3$ with $3$ periodic orbits. Then $\phi_t$ automatically is a depth $0$ NMS flow with three
 periodic orbits: an attractor $A$, a repeller $R$ and a saddle periodic orbit $\gamma$. $\gamma$ has two possibilities  depending on whether $\gamma$ is twisted.

\subsubsection{regular saddle periodic orbit} \label{3reper}
By Proposition \ref{Wadaop},
 when $\gamma$ is a regular saddle periodic orbit.
 $A\sqcup \gamma \sqcup R$ always forms a trivial link, i.e., a three component unlinked, unknotted link. But in this case, there  still exist several different topologically equivalent classes.

   To describe them, let's introduce some more parameters. We choose two small filtrating neighborhoods $N(A)$ and $N(R)$ of $A$ and $R$ correspondingly as follows.
   $W^s (\gamma) \cap \partial N(R)$  is composed of two simple closed curves $w^s_1$ and $w^s_2$ in $\partial N(R)$.  Similarly, $W^u (\gamma) \cap \partial N(A)$ is composed of two  simple closed curves $w^u_1$ and $w^u_2$. $\{w^s_1, w^s_2, w^u_1, w^u_2\}$ is  an accompany graph of $\phi_t$. By 3 in Proposition \ref{filnghd} and Remark \ref{loctopequ}, we can suppose that $w^s_1$ and $w^u_1$ are isotopic to $R$ and $A$ in $N(R)$ and $N(A)$ correspondingly. Moreover, $w^s_2$ and $w^u_2$ are inessential in $\partial N(R)$ and $\partial N(A)$ respectively.

   We denote  $\tau(\phi_t)=(i,j,k)$ ($i,j,k \in \{+,-\}$) as follows. If $w^s_1$ is the same (reps. reverse) orientation with $A$ in $N(A)$, $i=+$ (reps. $i=-$). Similarly, if $w^u_1$ is the same (reps. reverse) orientation with $R$ in $N(R)$, $j=+$ (reps. $j=-$). Moreover, if $w^s_2$ is left-hand (reps. right-hand) orientation in $\partial N(R)$, then $k=+$ (reps. $k=-$).

  By gluing $N(A)$, $N(R)$ and the filtrating neighborhood $N$ of $3$ in Proposition \ref{filnghd}, one can construct  $8$   flows represented by $\phi_t^{\tau}=\phi_t^{(i,j,k)}$ ($i,j,k\in\{+,-\}$).
By Proposition \ref{df}, they are pairwise different up to topological equivalence.  On the other hand, by the definition of coordinates (Section \ref{coordinate}) and Theorem \ref{topequ}, everyone of such kind of flows is topologically equivalent to some $\phi_t^{(i,j,k)}$.

Now we collect these discussions to the following proposition.
\begin{proposition}\label{3regper}
Let $\phi_t$ be an NMS flow on $S^3$ with $3$ periodic orbits such that the saddle periodic orbit is regular. Then $\phi_t$ is topologically
equivalent to  one of $8$ different  flows represented by $\phi_t^{(i,j,k)}$ ($i,j,k\in\{+,-\}$).  Moreover, these $8$ flows are pairwise different,
i.e., not topologically equivalent.
\end{proposition}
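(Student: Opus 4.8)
The plan is to prove the two assertions of the proposition separately: first that the eight flows $\phi_t^{(i,j,k)}$ exhaust all topological equivalence classes (\emph{completeness}), and then that they are pairwise inequivalent (\emph{distinctness}). Throughout I work with the decomposition $S^3 = N(A)\cup N\cup N(R)$, where $N$ is the type~3 filtrating neighborhood of the regular saddle $\gamma$ from Proposition~\ref{filnghd} and $N(A)$, $N(R)$ are the small filtrating neighborhoods of the attractor and repeller. By Wada's operation III (the case $i=3$ of Proposition~\ref{Wadaop}) the underlying indexed link $A\sqcup\gamma\sqcup R$ is the three--component trivial link for every such flow, so the link type alone carries no information and the distinction must come entirely from the accompany graph $\{w^s_1,w^s_2,w^u_1,w^u_2\}$.

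For completeness I would first reduce the accompany graph to the three signs $(i,j,k)$. By item~3 of Proposition~\ref{filnghd} together with Remark~\ref{loctopequ}, after an isotopy I may assume that $w^s_1$, $w^u_1$ are the essential curves isotopic to the cores $R$, $A$ and that $w^s_2$, $w^u_2$ are inessential; the only remaining data are the orientations of $w^s_1$, $w^u_1$ relative to their cores and the handedness of $w^s_2$, which are exactly recorded by $i$, $j$, $k$. Thus $\tau(\phi_t)=(i,j,k)\in\{+,-\}^3$ is well defined. After choosing the coordinates of Section~\ref{coordinate} on $\partial N(A)$ and $\partial N(R)$, the flow $\phi_t$ and the model $\phi_t^{\tau(\phi_t)}$ carry WLG--equivalent weighted Lyapunov graphs (the underlying graph being the tree with one $V_3$ saddle vertex joined to an $A$ and an $R$ vertex) and, by construction, accompany graphs that are equivalent in the sense of Definition~\ref{accequ}. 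Theorem~\ref{topequ} then yields $\phi_t\simeq\phi_t^{\tau(\phi_t)}$, so every flow of this type is one of the eight models.

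For distinctness I would show that $\tau$ is a topological invariant. A topological equivalence $f\colon S^3\to S^3$ sends attractor to attractor, repeller to repeller and saddle to saddle, and preserves the flow orientation, hence the orientations of the cores $A$, $R$. By Proposition~\ref{df}, up to the mapping class group $f$ restricts on $\partial N(A)$ and on $\partial N(R)$ to elements of $\Gamma$, that is, to matrices of the form $\left(\begin{smallmatrix}1&0\\ *&\pm1\end{smallmatrix}\right)$. Such a matrix fixes the longitude direction together with its orientation, so it preserves the orientation of $w^s_1$ and of $w^u_1$ relative to their cores, i.e.\ the signs $i$ and $j$. For $k$ one must also track the inessential curve $w^s_2$ under the determinant $-1$ generator $\left(\begin{smallmatrix}1&0\\ q&-1\end{smallmatrix}\right)$, verifying that the reversal of the meridian is compensated by the reversal of the ambient torus orientation, so that a left--handed $w^s_2$ stays left--handed. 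Granting this, $\tau\bigl(\phi_t^{(i,j,k)}\bigr)=(i,j,k)$ is unchanged by $f$, and the eight models are pairwise inequivalent.

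The main obstacle I anticipate is precisely the invariance of the handedness sign $k$ under the orientation--reversing generator of $\Gamma$: this is the one place where the orientation bookkeeping on the torus $\partial N(R)$ does not reduce to ``fixes the longitude,'' and it must be checked by a careful local computation rather than by a general-position argument. A secondary technical point, needed for completeness, is the simultaneous choice of the coordinates of Section~\ref{coordinate} on both flows so that their weighted Lyapunov graphs are literally WLG--equivalent, which is what licenses the application of Theorem~\ref{topequ}. Both points become routine once the local normalizations of Remark~\ref{loctopequ} are in place, but they are where the argument requires genuine care.
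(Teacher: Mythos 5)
Your proposal follows the paper's own proof essentially verbatim: the same decomposition $S^3=N(A)\cup N\cup N(R)$ with the invariant $\tau(\phi_t)=(i,j,k)$ read off the accompany graph via item~3 of Proposition~\ref{filnghd} and Remark~\ref{loctopequ}, completeness obtained from the coordinates of Section~\ref{coordinate} together with Theorem~\ref{topequ}, and distinctness from Proposition~\ref{df}. The only difference is that you make explicit the $\Gamma$-bookkeeping (that elements of $\Gamma$ are lower triangular with diagonal $(1,\pm 1)$, hence fix the oriented longitude and preserve $i,j$, with the handedness sign $k$ requiring a separate check under the determinant $-1$ generator), which the paper leaves implicit in its bare citation of Proposition~\ref{df}; this is a correct elaboration of the same argument, not a different route.
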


\subsubsection{twisted saddle periodic orbit} \label{twi3per}
In the case that $\gamma$ is a twisted saddle periodic orbit,
 one can build $\phi_t$ by gluing the filtrating neighborhoods $N(\gamma)$, $N(A)$ and $N(R)$ together. Then we can observe that $\phi_t$ is  topologically equivalent to one of the following four cases. Moreover, every series of parameters in the following four cases can be realized by some $\phi_t$.
                  \begin{itemize}
                   \item $L(A,\gamma)=2p-1$ ($p\in \mathbb{Z}$) and $L(R,\gamma)=1$. Here $L(A,\gamma)$ is the linking number of $A$ and $\gamma$. Moreover, $A$ is in the same direction of $\gamma$ in the complement of $R$.
                    \item $L(A,\gamma)=2p-1$ ($p\in \mathbb{Z}$) and $L(R,\gamma)=1$. $A$ is in the opposite direction of $\gamma$ in the complement of $R$.
                    \item $L(R,\gamma)=2p-1$ ($p\in \mathbb{Z}$ and $p \neq 0,1$) and $L(A,\gamma)=1$. $R$ is in the same direction of $\gamma$ in the complement of $A$.
                    \item $L(R,\gamma)=2p-1$ ($p\in \mathbb{Z}$ and $p \neq 0,1$) and $L(A,S)=1$.  $R$ is in the opposite direction of $S$ in the complement of $A$.
                  \end{itemize}

 We will show that each set of parameters in the list exactly corresponds to one topologically equivalent class and  they are pairwise different. Firstly, let's explain why the list contains all such kind of flows. The filtrating neighborhood $N(\gamma)$ of $\gamma$ is homeomorphic to $\Sigma(0,2; \frac{1}{2})$. Then for $\phi_t$, $R\cup \gamma$ or $A\cup \gamma$ forms a Hopf link.
 Up to topological equivalence (if it is needed, we can use $f$ constructed in the case $2$ periodic orbits), we suppose the linking number of $R$ (resp. $A$) and $\gamma$, $L(R,\gamma)=1$ (resp. $L(A,\gamma)=1$).  It is easy to know that $L(A,\gamma)=2p-1$ ($p\in \mathbb{Z}$) when $L(R,\gamma)=1$. Similarly, $L(R,\gamma)=2p-1$ when $L(A,\gamma)=1$. Notice that, up to topological equivalence, the case $L(R, \gamma)=\pm 1$ and $L(A,\gamma)=1$ can be included in the case $L(R,\gamma)=1$. Therefore, the list contains all such kind of flows.

  By a few computations and Theorem \ref{topequ}, each set of parameters decides an unique depth $0$ flow on $S^3$.

  In the end, we explain why they are pairwise different. The mapping class group of $S^3$, $\mathcal{MCG} (S^3) \cong Z_2$ and $f$ constructed  in the case $2$ periodic orbits can be regarded as a generator of the group. One can automatically check that the parameters in the first and the third cases of the list are pairwise different under $\mathcal{MCG} (S^3)$. Therefore, the corresponding flows  are pairwise different. To  finish distinguishing all the cases in the list, Proposition \ref{df} is enough.

 We can collect these discussions to the following proposition.
\begin{proposition}\label{3twper}
Let $\phi_t$ be an NMS flow on $S^3$ with $3$ periodic orbits such that the saddle periodic orbit is twisted. Then $\phi_t$  exactly can  be parameterized
by  the four cases listed above.  Moreover, if $\phi_t^1$ and $\phi_t^2$ are two such kind of flows with different parameters in the list, then
 $\phi_t^1$ and $\phi_t^2$ are not topologically equivalent.
\end{proposition}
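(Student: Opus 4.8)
The plan is to establish the three assertions in turn: that the four families exhaust all such flows, that each parameter tuple is realised by a single topological-equivalence class, and that distinct tuples give inequivalent flows.

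First I would pin down the topology forced by the twisted saddle. By case $5$ of Proposition \ref{filnghd} together with Corollary \ref{topN0}, the filtrating neighborhood of $\gamma$ is $\Sigma(0,2;\frac{1}{2})$ with $\gamma$ as its unique singular fiber and with the two boundary longitudes isotopic to regular fibers. One then glues the attractor neighborhood $N(A)$ and the repeller neighborhood $N(R)$ to the two boundary tori. Were a meridian of $N(A)$ or $N(R)$ glued to a regular fiber, an embedded $\mathbb{R}P^2$ would appear (exactly as in the proof of Lemma \ref{simpleWLG3}), which is impossible in $S^3$; hence $S^3$ inherits a Seifert structure. Since by Lemma \ref{top} a Seifert structure on $S^3$ carries at most two singular fibers and $\gamma$ is already one of type $\frac{1}{2}$, at most one of $A,R$ is a second singular fiber while the remaining orbit is a regular fiber. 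Consequently $A\cup\gamma$ or $R\cup\gamma$ is a Hopf link, and after applying, if necessary, the orientation-reversing self-homeomorphism $f$ of $S^3$ from Lemma \ref{X}, I normalize to $L(R,\gamma)=1$ (resp. $L(A,\gamma)=1$). The $(2,1)$-cable nature of $\gamma$ then forces the other linking number to be odd, of the form $2p-1$; recording the relative orientation of the regular fiber and $\gamma$ in the relevant knot complement produces the same/opposite dichotomy. Excluding $p=0,1$ (the values giving $|L|=1$, already covered by the $L(R,\gamma)=1$ family) removes the overlap between the two families, so the four cases are exhaustive.

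Next, for realisation and uniqueness I would invoke the simplified Umanskii Theorem (Theorem \ref{topequ}): a depth $0$ NMS flow on $S^3$ is determined up to topological equivalence by its WLG together with its accompany graph. A direct computation shows that each parameter tuple determines both the gluing matrices on $\partial N(\gamma)$ (hence the WLG) and the relative position of $W^s(\gamma)$ and $W^u(\gamma)$ near $A$ and $R$ (hence the accompany graph); conversely the pieces can be glued so as to realise each tuple. Thus every admissible tuple corresponds to exactly one topological-equivalence class.

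Finally, to separate distinct tuples I would use that any topological equivalence is a homeomorphism of $S^3$ and so acts through $\mathcal{MCG}(S^3)\cong\mathbb{Z}_2$, generated by $f$. The linking numbers $L(A,\gamma)$ and $L(R,\gamma)$ are invariants up to this $\mathbb{Z}_2$-action, which only reverses their signs; this already separates the first/second family from the third/fourth and separates different values of $p$. Within a single family the orientation parameter is detected by Proposition \ref{df}: a topological equivalence restricts on an attractor or repeller boundary torus to an element of $\Gamma$, and no such matrix can interchange the two orientation classes. I expect the genuine difficulty to lie precisely in this bookkeeping of $f$: one must check that the normalization is consistent and that $f$ does not secretly identify two nominally distinct tuples, a check that is most delicate exactly at the interface $|2p-1|=1$ between the $L(R,\gamma)=1$ and $L(A,\gamma)=1$ families.
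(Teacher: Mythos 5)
Your proposal is correct and follows essentially the same route as the paper: exhaustiveness via the Hopf-link structure forced by $N(\gamma)\cong\Sigma(0,2;\frac{1}{2})$ and normalization by the orientation-reversing map $f$ of Lemma \ref{X} (with the $p\neq 0,1$ exclusion handling the overlap), uniqueness of each tuple via Theorem \ref{topequ}, and pairwise distinctness via $\mathcal{MCG}(S^3)\cong\mathbb{Z}_2$ together with Proposition \ref{df}. In fact you supply slightly more detail than the paper does at the exhaustiveness step (the embedded $\mathbb{R}P^2$ obstruction and the Seifert-structure count from Lemma \ref{top}, in the style of Lemma \ref{simpleWLG3}), which the paper leaves implicit.
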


\begin{remark}
For every such kind of flow $\phi_t$,  by Theorem \ref{V5link} and the parameters of $\phi_t$, one can easily describe the indexed link of $\phi_t$. We leave it to the reader.
\end{remark}

   \subsection{4 periodic orbits}\label{4per}

    \begin{figure}

   \begin{center}
  \includegraphics[totalheight=4.5cm]{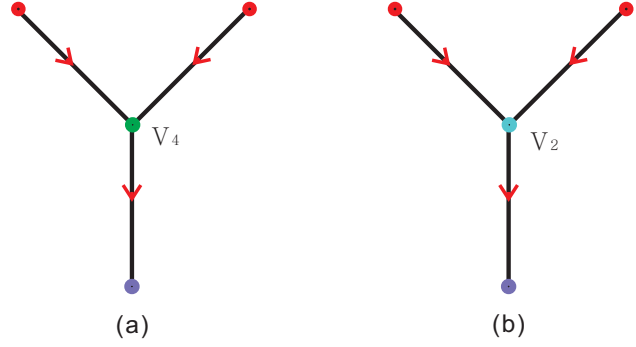}\\
  \caption{four vertices WLG}\label{WLG4}
  \end{center}
   \end{figure}

  Suppose $\phi_t$ is a depth $0$ NMS flow on $S^3$ with $4$ periodic orbits.
  By Theorem \ref{generalWLG}, if $G$ is a neat WLG of $\phi_t$,  the following  two cases never appear.
   \begin{enumerate}
     \item There are two $V_5$ vertices in $G$.
     \item A $V_3$ vertex and a $V_5$ vertex are adjacent in $G$.
   \end{enumerate}

 Then  there exactly exists  one saddle periodic orbit $\gamma$ in $\phi_t$.
For simplify, we will focus on the case that $\phi_t$ contains two repellers $R_1$ and $R_2$. The unique attractor is denoted by $A$.   A Lyapunov graph of $\phi_t$ is one of the two  cases showed in Figure \ref{WLG4}.   Actually, the two cases below and the flows below with converse orientations provide all and pairwise different depth $0$ NMS flows on $S^3$ with $4$ periodic orbits.  In this subsection, we omit some details of the proof. Actually, similar to the discussions above, Theorem \ref{topequ} tell us every case decide an unique topologically equivalent class of this kind of flows and Proposition \ref{df} tell us the flows in the list are pairwise different.

     \begin{enumerate}

       \item $\phi_t$  admits a Lyapunov graph as (a) of Figure \ref{WLG4} shows.  By Theorem \ref{noV5link},  $A\sqcup \gamma \sqcup R_1 \sqcup R_2$ form a four component indexed link which is composed of a Hopf link (which contains $R_1$) and two parallel $(p,q)$ fiber knot of $R_1$ where $p$ and $q$ are coprime.
           Without confusion, we can suppose  $R_1 \sqcup K$ is a positive Hopf link.  Let's describe $(p,q)$ more clearly. The longitude coordinate of $\partial N(R_1)$  is defined such that it bounds a disk in the complement space of $N(R_1)$. $(p,q)$ is defined to be the coordinate of $W^s (\gamma)\cap\partial N(R_1)$ in $\partial N(R_1)$.   The parameters of $\phi_t$ have the following possibilities.
           \begin{itemize}
             \item $K=A$, the orientation of $R_2$ is either positive or negative. Here the orientation of $R_2$ is positive 
             if $R_2$ is the same orientation to $\gamma$ in $S^3 - R_1 \sqcup A$.  Otherwise, the orientation of $R_2$ is negative. 
             \item $K=R_2$, the orientation of $A$ is positive or negative.  The definition of the orientation is similar to the case above. To don't appear any repeat, in this case, we should module more things. First, $|p|\neq 1$ and $|q|\neq 1$.  Secondly,  if two depth $0$ NMS flows have $(p,q)$ and $(q,p)$ parameters correspondingly and the other coordinates are the same, then they are topologically equivalent.
           \end{itemize}

       \item $\phi_t$  admits a Lyapunov graph as (b) of Figure \ref{WLG4} shows. The four component link $A\sqcup \gamma \sqcup R_1 \sqcup R_2$ are composed of a Hopf
       link and two unknotted, unlinked knots where $\gamma$ is an unlinked component. Without confusion, up to topological equivalence, we can suppose that the Hopf link is positive and is composed of $R_1$ and $A$. The following discussion is quite similar to Section \ref{3reper}.
       
       Let's introduce some more parameters. We choose three small filtrating neighborhoods $N(A)$, $N(R_1)$ and $N(R_2)$ of $A$, $R_1$ and $R_2$ correspondingly. Assume that $N(\gamma)= S^3 - N(A)\sqcup N(R_1)\sqcup N(R_2)$ which is a filtrating neighborhood of $\gamma$. The following facts can be followed by Proposition \ref{filnghd} and Remark \ref{loctopequ}.

   $W^s (\gamma) \cap \partial N(R_1)$  is an inessential  simple closed curves $w^s_1$  in $\partial N(R_1)$. 
   $W^s (\gamma) \cap \partial N(R_2)$  is a simple closed curves $w^s_2$ in $\partial N(R_2)$. Moreover, $w^s_2$ is isotopic to $R_2$ in $N(R_2)$.
    $W^u (\gamma) \cap \partial N(A)$ is composed of two  simple closed curves $w^u_1$ and $w^u_2$ in $\partial N(A)$. Here $w^u_1$ is inessential  simple  in $\partial N(A)$ and $w^u_2$  is isotopic to $A$ in $N(A)$.

   $\{w^s_1, w^s_2, w^u_1, w^u_2\}$ is  an accompany graph of $\phi_t$. 
  We denote  $\tau(\phi_t)=(i,j,k)$ ($i,j,k \in \{+,-\}$) as follows. 
  If $w^s_2$ is the same (reps. reverse) orientation with $R_2$ in $N(R_2)$, $i=+$ (reps. $i=-$). 
  Similarly, if $w^u_2$ is the same (reps. reverse) orientation with $A$ in $N(A)$, $j=+$ (reps. $j=-$).
   Moreover, if $w^s_1$ is left-hand (reps. right-hand) orientation in $\partial N(R_1)$, then $k=+$ (reps. $k=-$).
By gluing $N(A)$, $N(R_1)$, $N(R_2)$ and the filtrating neighborhood $N$ of $2$ in Proposition \ref{filnghd}, one can construct  $8$   flows represented by $\varphi_t^{\tau}=\varphi_t^{(i,j,k)}$ ($i,j,k\in\{+,-\}$).
  
  Now we collect these discussions to the following proposition.
\begin{proposition}\label{4perV2}
Let $\phi_t$ be a depth $0$ NMS flow on $S^3$ with $4$ periodic orbits and WLG (b) in Figure \ref{WLG4}.  Then $\phi_t$ is topologically
equivalent to  one of $8$ different  flows represented by $\varphi_t^{(i,j,k)}$ ($i,j,k\in\{+,-\}$).  Moreover, these $8$ flows are pairwise different,
i.e., not topologically equivalent.
\end{proposition}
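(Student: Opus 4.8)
The plan is to mirror the argument of Section~\ref{3reper} that produced Proposition~\ref{3regper}, replacing the type~$3$ saddle neighborhood used there by the type~$2$ neighborhood $N$ of Proposition~\ref{filnghd} that diagram (b) prescribes. The two halves of the statement — completeness of the list and pairwise inequivalence — are handled separately, the former by the simplified Umanskii Theorem and the latter by Proposition~\ref{df} together with the $\mathbb{Z}_2$ symmetry of $S^3$.

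First I would fix the topology of the decomposition. By Theorem~\ref{noV5link} applied to the neat simple WLG (b), the periodic orbits $A\sqcup\gamma\sqcup R_1\sqcup R_2$ form a Hopf link together with two unknotted, unlinked components, with $\gamma$ unlinked; since $\mathcal{MCG}(S^3)\cong\mathbb{Z}_2$ is generated by the orientation-reversing homeomorphism $f$ of Lemma~\ref{X}, I may normalize so that the Hopf link $R_1\cup A$ is positive. Choosing small filtrating neighborhoods $N(A)$, $N(R_1)$, $N(R_2)$ and setting $N(\gamma)=S^3-(N(A)\sqcup N(R_1)\sqcup N(R_2))$, Proposition~\ref{filnghd}(2) together with Remark~\ref{loctopequ} pins down the isotopy types of the four accompany-graph curves: $w^s_1$ inessential in $\partial N(R_1)$, $w^s_2$ isotopic to $R_2$, $w^u_1$ inessential in $\partial N(A)$, and $w^u_2$ isotopic to $A$. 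This is precisely what lets the three binary parameters $\tau(\phi_t)=(i,j,k)$ be read off intrinsically from $\phi_t$.

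Next, for completeness, I would build the eight model flows $\varphi_t^{(i,j,k)}$ by explicitly gluing $N(A)$, $N(R_1)$, $N(R_2)$ and $N$ so as to realize each prescribed triple, and then invoke Theorem~\ref{topequ}. The key observation is that, once the Hopf link has been normalized to be positive, both the WLG and the accompany graph of $\phi_t$ are completely determined by $(i,j,k)$; hence $\phi_t$ and $\varphi_t^{\tau(\phi_t)}$ possess a common WLG and accompany graphs that are equivalent in the sense of Definition~\ref{accequ}, and the simplified Umanskii Theorem produces a topological equivalence between them. Thus every such $\phi_t$ is topologically equivalent to one of the eight models.

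Finally, distinctness is the delicate part. Here I would use Proposition~\ref{df}: any topological equivalence between two of the models restricts on $\partial N(A)$ (and likewise on the repeller boundaries) to an element of the subgroup $\Gamma$, and one checks that no element of $\Gamma$ can reverse the orientation of a longitude relative to its core or toggle a left-hand curve into a right-hand one. Combined with the fact that the only residual ambient symmetry is the $\mathbb{Z}_2$ generated by $f$, whose effect on the normalized data I would track explicitly, this shows that no two of the triples $(i,j,k)$ can be identified, so the eight flows are pairwise inequivalent. The main obstacle I anticipate is exactly this last bookkeeping: ensuring that the $\Gamma$-constraint of Proposition~\ref{df} together with the $\mathbb{Z}_2$ action neither collapses two distinct triples nor spuriously separates a single class, so that precisely eight topological equivalence classes survive.
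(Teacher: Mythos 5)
Your overall architecture coincides with the paper's own proof: you normalize the Hopf link $R_1\cup A$ to be positive using $\mathcal{MCG}(S^3)\cong\mathbb{Z}_2$, pin down the four accompany-graph curves via Proposition \ref{filnghd} and Remark \ref{loctopequ} exactly as the paper does ($w^s_1$ and $w^u_1$ inessential, $w^s_2$ and $w^u_2$ isotopic to the cores $R_2$ and $A$), read off $\tau(\phi_t)=(i,j,k)$ intrinsically, construct the eight models $\varphi_t^{(i,j,k)}$ by gluing $N(A)$, $N(R_1)$, $N(R_2)$ to the type $2$ saddle neighborhood, and then obtain completeness from Theorem \ref{topequ} and pairwise distinctness from Proposition \ref{df}. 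This is precisely the argument of Section \ref{3reper} transplanted to the $V_2$ saddle, which is exactly what the paper does.

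The one step that would fail as literally written is your appeal to Theorem \ref{noV5link}. The WLG (b) of Figure \ref{WLG4} is \emph{not} a simple WLG: its saddle vertex is of type $V_2$ (the saddle neighborhood is $T^2\times[0,1]\,\sharp\, D^2\times S^1$), whereas Theorem \ref{noV5link} applies only to neat simple WLG, in which every saddle vertex is $V_4$; moreover its conclusion --- $n-2$ pairwise parallel $(p,q)$-cables of one Hopf component --- is the link structure of case (a), not of case (b). The split-link structure you correctly state for (b), namely a Hopf link $R_1\cup A$ together with $\gamma$ and $R_2$ as unknotted, unlinked components, should instead be derived from Proposition \ref{Wadaop}: performing flow split II at the $V_2$ vertex decomposes $\phi_t$ into the two-orbit Hopf flow of Proposition \ref{2periorb} and a trivial piece, and the original indexed link is the split sum of these with the trivial knot $K_{V_2}$. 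Since you draw the right conclusion and the remainder of your argument uses only that conclusion, this is a citation error rather than a structural flaw, but the step needs the corrected justification.
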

 \end{enumerate}

{\large \bf Acknowledgements}\\

This work was partially done when the author was visiting Institut de Math\'ematiques de Bourgogne, Universit\'e de Bourgogne, he would like to thank it for its hospitality. He also would like to thank the China Scholarship Council for financial support. The author was supported in part by NSFC.

\appendix
%\titleformat{\section}{\bfseries}{Appendix}{}{}

\section{A proof of Theorem \ref{topequ}} \label{appendix}
Before proving the theorem, we introduce an useful lemma. Notice that in the following, for technical reason, we will use  depth $0$ NMS flows of compact 3-manifolds with transverse tori boundary and their  WLG. They are the natural generalizations of standard definitions.

\begin{lemma}\label{glurule}
Suppose $X$ and $Y$ are two depth $0$ NMS flows on two compact 3-manifolds $M$ and $N$ correspondingly. Moreover suppose $T_M \subset \partial^{out} M$ and $T_N \subset \partial^{in} N$.   We denote the union of the saddle periodic orbits  of $X$ and $Y$  by  $\Gamma_X$ and $\Gamma_Y$ correspondingly. Let $h_0: T_M \rightarrow T_N$ and $h_1: T_M \rightarrow T_N$ be two isotopic homeomorphisms satisfying the following conditions.
\begin{enumerate}
  \item $h_i (w^u (\Gamma_X)) \cap w^s (\Gamma_Y) =\emptyset$ ($i\in\{0,1\}$).
  \item $h_0 (w^u (\Gamma_X)) \cap w^s (\Gamma_Y)$ is isotopic to $h_1 (w^u (\Gamma_X)) \cap w^s (\Gamma_Y)$ on $T_N$.
\end{enumerate}
We denote the two new depth $0$ flows on the new 3-manifold $W=M\cup_{h_0} N$ by $Z_0$ and $Z_1$ associated to $h_0$ and $h_1$ correspondingly.
Then $Z_0$ and $Z_1$ are two  depth $0$ NMS flows. Moveover, $Z_0$ and $Z_1$ are topologically equivalent.
\end{lemma}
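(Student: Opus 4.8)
The plan is to realize the equivalence by a map $F\colon M\cup_{h_1}N\to M\cup_{h_0}N$ that is the identity on $M$ and, on $N$, a self--equivalence $G$ of the flow $Y$. For $F$ to descend to the two quotients it must satisfy $G|_{T_N}=g$, where $g:=h_0\circ h_1^{-1}\colon T_N\to T_N$ is isotopic to the identity because $h_0\simeq h_1$. Granting such a $G$, the verification that $F$ sends $Z_1$--trajectories to $Z_0$--trajectories is bookkeeping at the seam: on the interiors of $M$ and $N$ the two flows literally agree with $X$ and $Y$, so $\mathrm{id}_M$ and $G$ already match trajectories there; and a trajectory of $Z_1$ leaving $M$ at $p\in T_M$ enters $N$ at $h_1(p)$, which $G$ carries to $g\bigl(h_1(p)\bigr)=h_0(p)$, precisely the entry point that the $h_0$--gluing prescribes for the corresponding $Z_0$--trajectory. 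Thus everything reduces to producing a self--equivalence $G\colon N\to N$ of $Y$ restricting to $g$ on $T_N$.

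To build $G$ I would first fix a collar $C\cong T_N\times[0,1]$ of $T_N=T_N\times\{0\}$ inside $N$ on which $Y$ is transverse to every slice $T_N\times\{s\}$; such a collar exists by the tubular flow (flow box) theorem since $Y$ is nonsingular and enters $N$ through $T_N$. In these coordinates the $Y$--trajectories in $C$ are the vertical segments $\{x\}\times[0,1]$, and the stable separatrix traces $w^s(\Gamma_Y)$ sweep out vertical annuli over the curves they cut on $T_N$. I would then realize the isotopy $g\simeq\mathrm{id}$ by a homeomorphism supported in $C$: choosing an isotopy $\{g_s\}_{s\in[0,1]}$ with $g_0=g$ and $g_1=\mathrm{id}$, flow each point of $C$ back to $T_N$, apply the appropriate $g_s$, and flow forward again, with a time reparametrization chosen so that vertical trajectories are sent to vertical trajectories and the correction fades to the identity as $s\to1$. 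The result is a homeomorphism equal to $g$ on $T_N$, equal to $\mathrm{id}$ on $T_N\times\{1\}$, hence patching with the identity on $N\setminus C$, and by construction orientation--preservingly carrying $Y$--trajectories to $Y$--trajectories.

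The hard part will be to make $G$ a genuine self--equivalence of $Y$: beyond preserving trajectories it must send the stable separatrix pattern $w^s(\Gamma_Y)$ to itself, and it is here that hypotheses (1) and (2) are indispensable. Condition (1), $h_i\bigl(w^u(\Gamma_X)\bigr)\cap w^s(\Gamma_Y)=\emptyset$, first guarantees that $Z_0$ and $Z_1$ are depth $0$, i.e.\ no heteroclinic trajectory joins a saddle of $X$ to a saddle of $Y$. Condition (2), which pins down that $h_0\bigl(w^u(\Gamma_X)\bigr)$ and $h_1\bigl(w^u(\Gamma_X)\bigr)$ occupy the same isotopy class relative to $w^s(\Gamma_Y)$, lets me select the isotopy $\{g_s\}$ inside the subgroup of homeomorphisms of $T_N$ fixing $w^s(\Gamma_Y)$ setwise; note that $g$ does carry $h_1\bigl(w^u(\Gamma_X)\bigr)$ onto $h_0\bigl(w^u(\Gamma_X)\bigr)$ by definition, and condition (1) keeps the moving unstable traces disjoint from $w^s(\Gamma_Y)$ at every stage so that depth $0$ is never violated along the isotopy. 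With $\{g_s\}$ chosen in this restricted family, the homeomorphism $G$ of the previous paragraph fixes the vertical annuli over $w^s(\Gamma_Y)$ and is therefore a bona fide topological equivalence of $Y$ with itself. The assembled map $F=\mathrm{id}_M\cup G$ is then the desired topological equivalence between $Z_0$ and $Z_1$.
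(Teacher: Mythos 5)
Your reduction to a self--equivalence $G\colon N\to N$ of $Y$ with $G|_{T_N}=g=h_0\circ h_1^{-1}$ is sound bookkeeping, but the construction of $G$ fails at its central step. In the flow-box coordinates $C\cong T_N\times[0,1]$ the $Y$--trajectories are exactly the vertical segments $\{x\}\times[0,1]$, so any homeomorphism of $C$ carrying vertical segments to vertical segments descends to a single homeomorphism $\psi$ of the flowline space $T_N$; that is, it has the form $(x,s)\mapsto\bigl(\psi(x),\sigma(x,s)\bigr)$ with $\psi$ independent of $s$. Its restrictions to the two ends therefore project to the \emph{same} map $\psi$ of $T_N$: a trajectory-preserving homeomorphism cannot equal $g$ on $T_N\times\{0\}$ and the identity on $T_N\times\{1\}$ unless $g=\mathrm{id}$, and no time reparametrization evades this. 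So $G$ cannot be supported in a collar, the patching with $\mathrm{id}_{N\setminus C}$ collapses, and the only way to extend $g$ into $N$ is to push it along the entire flow, $G(\phi^Y_t(x))=\phi^Y_t(g(x))$ for $x\in T_N$ --- at which point the real difficulty, continuity of $G$ at the periodic orbits and their invariant manifolds (whose backward orbits never reach $T_N$), is exactly what your argument does not address; this is the nontrivial Umanskii-type local analysis. A secondary gap: nothing in hypotheses (1)--(2) makes $g$ preserve $w^s(\Gamma_Y)$ setwise --- (2) only fixes the isotopy class of the unstable traces relative to $w^s(\Gamma_Y)$ --- so your ``subgroup of homeomorphisms of $T_N$ fixing $w^s(\Gamma_Y)$ setwise'' need not contain $g$ at all, let alone an isotopy from $g$ to the identity within it.

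The paper avoids constructing any homeomorphism. Using (1) and (2) it interpolates $h_0$ and $h_1$ by a continuous family $H_s\colon T_M\to T_N$ of gluing maps along which $H_s(w^u(\Gamma_X))$ stays disjoint from $w^s(\Gamma_Y)$ and in a fixed configuration relative to it; the glued flows $Z^s=X\cup_{H_s}Y$ then form a path of depth $0$ NMS flows on the fixed manifold $W$, and structural stability of NMS flows (each $Z^s$ has a neighborhood of topologically equivalent flows in the space of smooth flows, plus compactness of $[0,1]$) gives $Z_0\sim Z_1$ outright. Note that the equivalence so obtained is in general \emph{not} the identity on $M$; your normalization $F|_M=\mathrm{id}$ is a much stronger demand than the lemma makes, and it is precisely what forces you into the unsolvable collar extension problem.
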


\begin{proof}
Since $h_0$ and $h_1$ are isotopic and  $h_0 (w^u (\Gamma_X)) \cap w^s (\Gamma_Y)$ is isotopic to $h_1 (w^u (\Gamma_X)) \cap w^s (\Gamma_Y)$.
There exists $H_s : T_M \rightarrow T_N$ ($s\in [0,1]$) which satisfies the following conditions.
\begin{itemize}
  \item $H_0 =h_0$ and $H_1 =h_1$.
  \item $H_s (w^u (\Gamma_X)) \cap w^s (\Gamma_Y)$ is isotopic to $h_0 (w^u (\Gamma_X)) \cap w^s (\Gamma_Y)$ for every $s\in [0,1]$.
\end{itemize}

Suppose the gluing manifold $M\cup_{H_s} N$ is $W$. The reason that $W$ doesn't depend on $H_s$ can be followed by the fact that $H_s$ ($s\in [0,1]$) are pairwise isotopy. Now we define  the new gluing flow $X\cup_{H_s} Y$ by $Z^s$. Automatically, $Z^0 =Z_0$, $Z^1 =Z_1$  and the underline manifold of $Z_s$ is $W$.  To ensure that $Z^s$ is a smooth flow, we can regard that there exist  Riemann metrics on $M$ and $N$ such that $X$ and $Y$ are orthogonal to $T_M$ and $T_N$ correspondingly. Since    $H_s (w^u (\Gamma_X)) \cap w^s (\Gamma_Y)$ is isotopic to $h_0 (w^u (\Gamma_X)) \cap w^s (\Gamma_Y)$ and $X$ and $Y$ are depth $0$ NMS flows, $Z^s$ is a depth $0$ NMS flow on $W$. Therefore, $Z^s$ ($s\in [0,1]$) provides a path in the smooth flow space of $M$ connecting $Z_0$ with $Z_1$. It is well known that  NMS flows are structure stable, therefore $Z_0$ and $Z_1$ are two depth $0$ NMS flows and they are topologically equivalent.
\end{proof}

For a compact graph $G$, a vertex $V\in G$ is called an \emph{isolated inner vertex} if  it satisfies the following two conditions.
 \begin{enumerate}
   \item The degree of $V$, $deg(V)\geq 2$.
   \item  There are  $deg(V)-1$ edges such that each edge is adjacent to $V$ and a degree $1$ vertex.
 \end{enumerate}
  The left unique edge adjacent to $V$ is called the \emph{inner edge} of $V$.

\begin{lemma}\label{isov}
Let $G$ be a compact tree with at least $3$ vertices, then there exists an inner vertex $V\in G$.
\end{lemma}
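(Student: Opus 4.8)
The plan is to locate the required vertex at the penultimate vertex of a longest path in $G$. First I would fix a path $P = v_0 v_1 \cdots v_k$ in $G$ of maximal length; such a path exists since $G$ is finite. Before using it, I would record that $k \geq 2$: if the longest path had length at most $1$, then no vertex could have two distinct neighbours, so every vertex would have degree at most one, forcing $G$ to be a single vertex or a single edge and contradicting the hypothesis that $G$ has at least $3$ vertices. Hence $P$ has at least three vertices, $v_{k-1}$ is well defined, $v_{k-2} \neq v_k$, and in particular $\deg(v_{k-1}) \geq 2$, so the first condition in the definition of an isolated inner vertex holds for $V = v_{k-1}$.

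The heart of the argument is the claim that every neighbour $w$ of $v_{k-1}$ other than $v_{k-2}$ is a leaf (a degree-one vertex). I would prove this by contradiction, using that a tree is acyclic. First, such a $w$ cannot equal an interior vertex $v_i$ of $P$ with $i \neq k$, since the edge $w\,v_{k-1}$ together with the subpath of $P$ joining $v_i$ to $v_{k-1}$ would close a cycle; thus either $w = v_k$, which is already a leaf as an endpoint of a longest path, or $w$ lies off $P$. In the remaining case, suppose for contradiction that $w \notin P$ has $\deg(w) \geq 2$, so $w$ has a neighbour $w' \neq v_{k-1}$. A second application of acyclicity forces $w'$ to lie off $P$ as well (otherwise the edges $w'w$, $w\,v_{k-1}$ and the tree-path back through $P$ would form a cycle, since $w \notin P$). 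Then $v_0 \cdots v_{k-1}\, w\, w'$ is a genuine simple path of length $k+1$, contradicting the maximality of $P$. Therefore $\deg(w) = 1$.

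Granting the claim, all $\deg(v_{k-1})$ edges incident to $v_{k-1}$ except the edge $v_{k-1}v_{k-2}$ join $v_{k-1}$ to a degree-one vertex, giving the required $\deg(v_{k-1}) - 1$ edges to leaves and leaving $v_{k-1}v_{k-2}$ as the inner edge; hence $v_{k-1}$ is an isolated inner vertex. The main obstacle I anticipate is purely the bookkeeping inside the two acyclicity arguments, namely verifying carefully that the neighbours $w$ and $w'$ really lie off the path $P$, so that the extended walk $v_0 \cdots v_{k-1}\, w\, w'$ is a bona fide simple path; once that is pinned down everything else is routine. I would also remark in passing that the degenerate configuration in which $v_{k-1}$ is adjacent only to leaves (for instance when $G$ is a star) causes no trouble, since one is still free to designate $v_{k-1}v_{k-2}$ as the inner edge.
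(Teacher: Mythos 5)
Your proof is correct and follows essentially the same route as the paper: the paper also takes a path $l=(V_0,V_1,\dots,V_n)$ of maximal length in $G$ and declares the vertex adjacent to an endpoint ($V_1$ there, your $v_{k-1}$ — a mirror-image choice) to be the isolated inner vertex. The only difference is that the paper dismisses the verification with ``one can easily check,'' whereas you supply the full acyclicity-plus-maximality argument showing every neighbour of $v_{k-1}$ other than $v_{k-2}$ is a leaf, which is exactly the intended check.
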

\begin{proof}
Let's endow $G$ a metric such that the length of each edge is $1$, then naturally  there exists a path $l=(V_0, V_1,\dots,V_n)$ with maximal length among all pathes in $G$. Here $l=(V_0, V_1,\dots,V_n)$ is the path which starts at the vertex $V_0$, passes through $V_1,\dots, V_{n-1}$, and in the end terminates at $V_n$. Then one can easily check that $V_1$ is an isolated inner vertex of $G$.
\end{proof}

\begin{lemma}\label{splitrule}
Under the assumptions of Theorem \ref{topequ}, suppose $V^1 \in G_1$ is an isolated inner vertex with a corresponding inner edge $e^1$, $V^2 =g (V^1) \in G_2$ and $e^2 = g (e^1)$. An inner point of $e^i$ cuts $G_i$ to $G_i^0$ and $G_i^1$. Correspondingly, the transverse tori $T_i$  associated to the inner point cuts $\phi_t^i$ to depth $0$ NMS flows $X_t^i$ and $Y_t^i$ on $M_i$ and $N_i$ correspondingly. Then, two accompany graphs associated to $X_t^1$ and $X_t^2$ (resp. $Y_t^1$ and $Y_t^2$)  are equivalent under $G_1^0$ and $G_2^0$ (resp. $G_1^1$ and $G_2^1$) by $g$.
\end{lemma}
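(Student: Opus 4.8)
The plan is to obtain the two required equivalences by \emph{restricting} the homeomorphism furnished by the hypothesis. Write $\gamma^i$ for the saddle orbit labelling $V^i$, let $\mathcal{T}^i$ denote the union of the boundary tori of the tubular neighborhoods of the attractors and repellers of $\phi_t^i$, and let $f\colon \mathcal{T}^1\to\mathcal{T}^2$ be the homeomorphism realizing the equivalence of the accompany graphs of $\phi_t^1$ and $\phi_t^2$ (Definition \ref{accequ}). We may take $G_i^0$, hence $M_i$ and $X_t^i$, to be the piece containing $V^i$; the other labelling is symmetric. Since $V^i$ is an isolated inner vertex, every vertex of $G_i^0$ other than $V^i$ is a degree-$1$ vertex, hence labelled by an attractor or a repeller; these, together with $\gamma^i$, are exactly the orbits captured by $M_i$, while all remaining attractors, repellers and saddles lie in $N_i$. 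Thus $\mathcal{T}^i$ splits as $\mathcal{T}^i_X\sqcup\mathcal{T}^i_Y$, the tori over the attractors and repellers in $M_i$ and in $N_i$ respectively.

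First I would check that $g$ is compatible with the cut. Because $g$ is a WLG map with $g(V^1)=V^2$ and $g(e^1)=e^2$, it carries the leaves adjacent to $V^1$ onto those adjacent to $V^2$; hence the attractors and repellers captured by $M_1$ are sent to those captured by $M_2$, and likewise for $N_1$ and $N_2$. Consequently $g$ restricts to WLG maps $g_0\colon G_1^0\to G_2^0$ and $g_1\colon G_1^1\to G_2^1$, once we equip the new boundary torus $T_i$ with the coordinate inherited from $\phi_t^i$ and record on the two half-edges of $e^i$ the data induced by $B_{e^i}$; here one uses that $g$ already preserved $B_{e^1}=B_{e^2}$ together with all vertex labels. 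This yields the WLG-equivalence requirement of Definition \ref{accequ} for each piece. Since clause (1) of Definition \ref{accequ} says $f$ coordinates to $g$, we obtain $f(\mathcal{T}^1_X)=\mathcal{T}^2_X$ and $f(\mathcal{T}^1_Y)=\mathcal{T}^2_Y$, so $f$ restricts to homeomorphisms $f_X$ and $f_Y$.

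Next I would verify the remaining clauses of Definition \ref{accequ} for $f_X$ and $f_Y$. Being isotopic to the identity in the chosen coordinates (clause (2)) and sending each accompany-graph curve to its partner (clause (3)) are conditions imposed torus by torus, so they descend to any sub-collection of tori; the stable/unstable-and-saddle-correspondence clause (4) follows from the original one, using that $g_0$ matches the single saddle $\gamma^1\leftrightarrow\gamma^2$ of the $X$-pieces and that $g_1$ matches the remaining saddles of the $Y$-pieces. The only genuine work is to identify the accompany graph of each cut flow with the restriction of the original accompany graph to the relevant tori. On the $X$-side this is immediate: each leaf of $M_i$ is adjacent only to $V^i$, so $\gamma^i$ is the sole saddle whose invariant manifolds reach $\mathcal{T}^i_X$, and since the cutting torus $T_i$ may be taken disjoint from the small neighborhoods over which $\mathcal{T}^i_X$ sits, the curves of $X_t^i$ on $\mathcal{T}^i_X$ coincide with the curves $c_j^i$ lying there.

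The main obstacle is the corresponding statement on the $Y$-side, which requires care near the cut. By Proposition \ref{filnghd} the inner edge $e^i$ is a single boundary torus of $\gamma^i$, lying in $\partial^{in}$ or in $\partial^{out}$; accordingly one branch of $W^s(\gamma^i)$ or of $W^u(\gamma^i)$ runs through $T_i$ and meets exactly one neighborhood belonging to $\mathcal{T}^i_Y$, producing a curve on $\mathcal{T}^i_Y$ that is \emph{not} traced by any saddle of the intrinsic flow $Y_t^i$. The point to establish is that, in the natural generalization of the accompany graph to depth $0$ flows on manifolds with transverse torus boundary, this crossing curve is precisely the trace on $\mathcal{T}^i_Y$ of the flowlines entering or leaving $N_i$ through $T_i$, so that the accompany graph of $Y_t^i$ is again exactly the restriction of $\{c_j^i\}$ to $\mathcal{T}^i_Y$. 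Once this bookkeeping is settled, the crossing curves for $i=1$ and $i=2$ correspond under $f$ (they are among the $c_j^i$ matched by the hypothesis), $f_Y$ satisfies clauses (2)--(4), and the two pieces are equivalent under $g_0$ and $g_1$, which proves the lemma. I expect this identification of curves across the cut---ensuring that none is spuriously created or lost---to be the single place where real care is needed.
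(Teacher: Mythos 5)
There is a genuine gap, and it sits exactly where you deferred the work. In the paper's generalization of the accompany graph to depth $0$ flows on compact manifolds with transverse torus boundary, the boundary torus is itself one of the tori of the accompany graph: in the paper's proof, $\mathcal{T}_i^0$ is defined as $\partial M_i$ \emph{together with} the tori around the attractors and repellers in $M_i$, and likewise $\mathcal{T}_i^1$ contains $\partial N_i$. The curves on the cut torus --- $w^u(\gamma^1)\cap \partial M_1$ for the $X$-piece and $w^s(\Gamma_1)\subset \partial N_1$ for the $Y$-piece, where $\Gamma_1$ is the union of saddles in $N_1$ --- are part of the data the equivalence must match. Your argument restricts $f$ to $\mathcal{T}^i_X\sqcup\mathcal{T}^i_Y$ and never produces any homeomorphism on $T_i$ at all, so it proves a strictly weaker statement than the lemma as the paper uses it, and the one step you flag as ``the point to establish'' (the crossing-curve bookkeeping) is left unproven. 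Moreover, your claim that the $X$-side is immediate is not correct: the stable manifolds of saddles of $N_i$ flow backwards across $T_i$ and trace curves on the repeller tori inside $M_i$, so $\gamma^i$ is \emph{not} the sole saddle whose invariant manifolds reach $\mathcal{T}^i_X$; the same phenomenon you noticed on the $Y$-side occurs on both sides, and it is handled in the paper not torus-by-torus downstream but by matching all curves on the cut torus itself, which determines the downstream traces up to isotopy.

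The missing construction is essentially the entire content of the paper's proof, and it is what makes the lemma usable afterwards: in the induction proving Theorem \ref{topequ}, the equivalences of the pieces feed the inductive hypothesis to produce topological equivalences $g_0$ and $g_1$, and Lemma \ref{glurule} then requires that $h_2'=g_1^{-1}\circ h_2\circ g_0$ be isotopic to $h_1$ with the intersection pattern of unstable and stable curves on the cut torus preserved up to isotopy; that control is available only because the accompany-graph equivalence of the pieces includes the boundary tori. Concretely, the paper first defines $f_0$ on $w^u(\gamma^1)\subset\partial M_1$ via the given equivalence of the accompany graphs of $\phi_t^1$ and $\phi_t^2$, extends it to all of $\partial M_1$ using the trichotomy from Proposition \ref{filnghd} (the curve is parallel to the longitude, parallel to the meridian, or inessential), simultaneously arranges $f_0(w^s(\Gamma_1))=w^s(\Gamma_2)$, and then transports it to the other side by $f_1=h_2\circ f_0\circ h_1^{-1}$; since WLG equivalence gives equal gluing matrices $[h_1]=[h_2]$ for $e^1$ and $e^2$, and $[f_0]$ is the identity in the chosen coordinates, $[f_1]$ is also the identity, which verifies clause (2) of Definition \ref{accequ} on $\partial N_1$. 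To repair your proposal you would need to add this construction on the cut torus (and state the generalized accompany graph so that $T_i$ belongs to it); your restriction argument for the tori away from the cut is fine and matches the first step of the paper's proof.
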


\begin{proof}
Suppose $\mathcal{T}_i^0$  ($i\in \{1,2\}$) is composed of $\partial M_i$ and  the boundary of a small tubular neighborhood of the repellers and the attractors in $M_i$. Similarly, we can suppose $\mathcal{T}_i^1$ is composed of $\partial N_i$ and  the boundary of a small tubular neighborhood of the repellers and the attractors in $N_i$. Pay attention on that originally,  $T_i = \partial M_i = \partial N_i$. We only need to prove that there exist $f_j: \mathcal {T}_1^j\rightarrow \mathcal {T}_2^j$ ($j \in \{0,1\}$) such that $f_j$ preserves two accompany graphs on $\mathcal{T}_1^j$  and $\mathcal{T}_2^j$ and satisfies the conditions in Definition \ref{accequ}. Without confusion, we can suppose that the flow $\phi_t^i$ is transverse outward to $\partial M_i$ on $M_i$.

Now let's construct $f_j$. Firstly, on a connected component $T \in  \mathcal{T}_1^0-\partial M_1$, $f_0 =f$. Similarly, on a connected component $T \in  \mathcal{T}_1^1-\partial N_1$, $f_1 =f$. They automatically satisfy the conditions in Definition \ref{accequ}. Now we only need to focus on the construction of $f_0 |_{\partial M_1}$ and $f_1 |_{\partial N_1}$.

$f_0$ firstly can be naturally defined on $w^u  (\gamma_1^0)$ (to $w^u  (\gamma_2^0)$) induced by the accompany graphs of $\phi_t^1$ and $\phi_t^2$. Since $w^u  (\gamma_1^0)$ is one of the following three cases on the corresponding torus (see Proposition \ref{filnghd} and Section \ref{coordinate}):
    \begin{enumerate}
      \item $w^u  (\gamma_1^0)$ is parallel to the longitude;
      \item $w^u  (\gamma_1^0)$ is parallel to the meridian;
      \item $w^u  (\gamma_1^0)$ is inessential.
    \end{enumerate}
Then, we can easily extend the map to $f_0$ on $\partial {M_1}$ which satisfies  the conditions in Definition \ref{accequ} (in particular, the second condition). Moreover, following  Proposition \ref{filnghd} and Section \ref{coordinate}, one can check that
we can construct $f_0$ such that $f_0 (w^s (\Gamma_1)) =w^s (\Gamma_2)$. Here $\Gamma_1$ and $\Gamma_2$ are the saddle periodic orbits in $N_1$ and $N_2$ correspondingly.

Suppose $h_i$ is the natural gluing map from $\partial {M_i}$ to $\partial {N_i}$ ($i\in \{1,2\}$). We can define $f_1$ on $\partial {N_1}$ by
$f_1 = h_2 \circ f_0 \circ h_1^{-1}$. Since $f_0 (w^s (\Gamma_1)) =w^s (\Gamma_2)$, $f_1 (w^s  (\Gamma_1))=w^s (\Gamma_2)$.
Notice that the gluing matrices $[h_1]$ from $M_1$ to $N_1$ and $[h_2]$ from $M_2$ to $N_2$ are the same, moreover the gluing matrix $[f_0]$ associated to $f_0$ is $\left(
                                                                                     \begin{array}{cc}
                                                                                       1 & 0 \\
                                                                                       0 & 1 \\
                                                                                     \end{array}
                                                                                   \right)$.
Then,  the gluing matrix $[f_1]$ associated to $f_1$ also is $\left(
                                                                                     \begin{array}{cc}
                                                                                       1 & 0 \\
                                                                                       0 & 1 \\
                                                                                     \end{array}
                                                                                   \right)$.
All these arguments and the construction of $f_1$ ensure that $f_1$ satisfies the conditions in Definition \ref{accequ}. Then the lemma is proved.
\end{proof}

 After these preparations, we can finish the proof of  Theorem \ref{topequ}.

\begin{proof}
(\emph{The proof of Theorem \ref{topequ}})
Although the theorem is stated for depth $0$ NMS flows on $S^3$. For the convenience to do some inductive arguments, we also consider depth $0$ NMS flows on compact 3-manifolds with transverse tori as the boundaries.

 Suppose $M$ and $N$ are two compact 3-manifolds and there are depth $0$ NMS flows  $\psi_t^1$ and $\psi_t^2$ on $M$ and $N$ satisfying the conditions in the theorem.  If $M$  doesn't admit any saddle periodic orbit, by  Lemma \ref{glurule}, $\psi_t^1$ and $\psi_t^2$ are topologically equivalent.  If $M$ admits  one saddle periodic orbit.  By Proposition \ref{filnghd} and Remark \ref{loctopequ}, the two saddle filtrating neighborhoods of  $\psi_t^1$ and $\psi_t^2$ respectively are  topologically equivalent. Moreover, by  Lemma \ref{glurule}, $\psi_t^1$ and $\psi_t^2$ are topologically equivalent.

  Now we suppose the number of the periodic orbits of $\phi_t^1$ is $k$ and $k\geq3$. Moveover we assume that for each depth $0$ NMS flow with periodic orbits number  strictly less than $k$, the theorem is correct.
By Lemma \ref{isov}, there exists two isolated inner vertices $v^1\in G_1$ and $v^2 = g(v^1) \in G_2$ correspondingly.
 One can cut $G_1$ along  the inner edge  associated to $v^1$ to $G_1^0$ and $G_1^1$ such that $G_1^0$ contains the isolated inner vertex. Similarly,  $G_2$ is cut to $G_2^0$ and $G_2^1$. Then we can use the notations and consequences in Lemma \ref{splitrule}.

 By Lemma \ref{splitrule} and the inductive assumption, $X_t^1$ and $Y_t^1$ are topologically equivalent to $X_t^2$ and $Y_t^2$ by $g_0$ and $g_1$ correspondingly. Suppose the natural gluing map from $M_i$ to $N_i$ is  $h_i$ and $h_2' =g_1^{-1} \circ h_2 \circ g_0$.
One can check that  $h_1$ and $h_2'$ are isotopic and they satisfy the following conditions.
\begin{enumerate}
  \item $h_1 (w^u (\gamma_1^0)) \cap w^s (\Gamma_1) =\emptyset$ and $h_2' (w^u (\gamma_1^0)) \cap w^s (\Gamma_1) =\emptyset$.
  \item $h_1 (w^u (\Gamma_X)) \cap w^s (\Gamma_Y)$ is isotopic to $h_2' (w^u (\Gamma_X)) \cap w^s (\Gamma_Y)$ on $T_N$.
\end{enumerate}
Therefore, by Lemma \ref{glurule}, $\phi_t^1$ and $\phi_t^2$ are topologically equivalent.
\end{proof}

\vskip 1cm

\noindent Bin Yu

\noindent {\small Department of Mathematics}

\noindent{\small Tongji University, Shanghai 2000
92, CHINA}

\noindent{\footnotesize{E-mail: binyu1980@gmail.com }}

\end{document}